\documentclass[11pt,a4paper]{article}
\usepackage{latexsym,amssymb,amsfonts,amsmath,amsthm,nccmath,float,graphicx,multirow,enumitem,setspace,bm,authblk,tabularx,longtable,scrextend,anyfontsize}
\usepackage[dvipsnames]{xcolor}
\usepackage[hidelinks]{hyperref}
\usepackage[margin=2cm]{geometry}

\changefontsizes[13.9pt]{11.3pt}

\hypersetup{
	colorlinks,
	linkcolor={red!80!black},
	citecolor={blue!80!black},
	urlcolor={blue!80!black}
}

\allowdisplaybreaks
\setlist[itemize]{topsep=4pt,itemsep=2pt}
\setlist[enumerate]{topsep=4pt,itemsep=2pt}

\newtheorem{theorem}{Theorem}[section]
\newtheorem{lemma}[theorem]{Lemma}
\newtheorem{proposition}[theorem]{Proposition}
\newtheorem{corollary}[theorem]{Corollary}
\theoremstyle{definition}
\newtheorem{definition}[theorem]{Definition}
\theoremstyle{remark}
\newtheorem{remark}[theorem]{Remark}
\theoremstyle{definition}
\newtheorem{condition}[theorem]{Condition}
\newtheorem{example}[theorem]{Example}

\newcommand{\ind}{\mathbf 1}

\allowdisplaybreaks

\let\oldbibliography\thebibliography
\renewcommand{\thebibliography}[1]{
  \oldbibliography{#1}
  \setlength{\itemsep}{1pt}
  \setlength{\parsep}{0pt}
  \small
}

\title{The existence of symmetric cyclic Steiner quadruple systems}
\author[a]{Hengrui Liu}
\author[a,b]{Tao Feng \thanks{Supported by NSFC under Grant 12271023}}
\affil[a]{School of Mathematics and Statistics, Beijing Jiaotong University, Beijing, 100044, P.R. China}
\affil[b]{Hebei Provincial Key Laboratory of Mathematical Theory and Analysis for Network and Data Science, Beijing Jiaotong University, Beijing, 100044, PR China}

\affil[ ]{henryleo@bjtu.edu.cn; tfeng@bjtu.edu.cn}
\date{}

\begin{document}

\maketitle

\begin{abstract}
A Steiner quadruple system of order $v$, denoted by $\operatorname{SQS}(v)$, is called cyclic if it admits $\mathbb Z_v$ as a group of automorphisms acting regularly on its point set. A cyclic system is called symmetric cyclic, or S-cyclic, if the multiplier $-1$ preserves each block orbit individually. Grannell and Griggs conjectured in 1983 that such a system of order $v$ exists if and only if $v\equiv2,4,10,20\pmod{24}$ and every odd prime divisor $p$ of $v$ satisfies $p\equiv1$ or $5\pmod{12}$. This paper proves the conjecture. The main step is to construct an $S$-cyclic Steiner quadruple system of order $2p$ for every prime $p\equiv1\pmod4$. Together with existing reduction theorems, these constructions determine which finite abelian groups admit a symmetric group-invariant Steiner quadruple system.
\end{abstract}

\noindent\textbf{2020 Mathematics Subject Classification.} Primary 05B05; Secondary 05C70, 05C50.

\smallskip
\noindent\textbf{Keywords.} Steiner quadruple system; cyclic design; K\"ohler graph; perfect matching.


\section{Introduction}

A \emph{Steiner quadruple system} of order $v$, denoted by $\operatorname{SQS}(v)$, is a pair $(V,\mathcal B)$, where $V$ is a set of $v$ points and $\mathcal B$ is a collection of $4$-subsets of $V$, called blocks, such that every $3$-subset of $V$ is contained in exactly one block. A necessary condition for the existence of an $\operatorname{SQS}(v)$ is $v\equiv2$ or $4\pmod6$. The smallest nontrivial example, $\operatorname{SQS}(8)$, was already known to Kirkman in 1847 \cite{Kirkman1847}, while the existence problem was completely settled by Hanani \cite{Hanani} in 1960, who proved that
the necessary condition is also sufficient.

An automorphism of an $\operatorname{SQS}(v)$ is a permutation of its point set that maps blocks to blocks. The set of all automorphisms of an $\operatorname{SQS}(v)$ forms a group under composition, called the \emph{full automorphism group}. A subgroup of the full automorphism group is called a \emph{group of automorphisms}. An $\operatorname{SQS}(v)$ is called \emph{cyclic}, written as $\operatorname{CSQS}(v)$, if its point set can be identified with the cyclic group $\mathbb Z_v$ of order $v$ such that the translation $x\mapsto x+1$ is an automorphism. In this case, $\mathbb Z_v$ acts regularly on the point set and induces an action on the block set. The blocks are partitioned into orbits under this action. A representative chosen from each orbit is called a \emph{base block}. A block orbit is called \emph{full} if its length is $v$, and \emph{short} otherwise. A cyclic Steiner quadruple system is called \emph{strictly cyclic} if all its block orbits are full; such a system is denoted by $\operatorname{sSQS}(v)$.

The existence problems for cyclic and strictly cyclic Steiner quadruple systems remain far from settled. A cyclic $\operatorname{SQS}(v)$ can exist only when $v\equiv2$ or $4\pmod6$, while a strictly cyclic $\operatorname{SQS}(v)$ satisfies the stronger necessary condition $v\equiv2$ or $10\pmod{24}$. Even within a fixed admissible congruence class of $v$, the existence problems remain unresolved. See \cite{FengChang2011} and \cite{FengChangJi2008} for known results on cyclic and strictly cyclic Steiner quadruple systems, respectively.

Lindner and Rosa~\cite{LindnerRosa1978} introduced the term \emph{symmetric cyclic} for cyclic Steiner quadruple systems satisfying the following symmetry condition. A cyclic $\operatorname{SQS}(v)$ is called \emph{symmetric cyclic}, or \emph{$S$-cyclic} for short, if the map $x\mapsto -x$ leaves each $\mathbb Z_v$-orbit of blocks invariant. Equivalently, for every block $B$, there exists $\delta\in\mathbb Z_v$ such that $-B=B+\delta$. Grannell and Griggs~\cite{GrannellGriggs}, and independently Diener~\cite{Diener}, established the following necessary conditions.

\begin{lemma}[\cite{Diener,GrannellGriggs}]\label{lem:intro-necessary}
If an $S$-cyclic $\operatorname{SQS}(v)$ exists, then $v\equiv2,4,10,20\pmod{24}$, and every odd prime divisor $p$ of $v$ satisfies $p\equiv1$ or $5\pmod{12}$.
\end{lemma}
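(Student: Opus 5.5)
We prove the two necessary conditions separately. For each we count, within a suitable class of triples, how many lie in blocks of each possible shape. The symmetry hypothesis forces every block orbit to be invariant under $x\mapsto -x$, and this restricts the shapes that can occur.

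\emph{Step 1: structure of symmetric blocks.} Let $B$ be a block with $-B=B+\delta$. The map $\sigma:x\mapsto \delta-x$ is an involution on $\mathbb Z_v$ that maps $B$ to itself. So $B$ is a union of $\sigma$-orbits. Its fixed points in $B$ satisfy $2x=\delta$, and there are at most two of them. Hence, up to translation, $B$ has the form $\{a,-a,b,-b\}$ (after recentering by $\delta/2$ when $\delta$ is even), or $\{0,a,-a,v/2\}$, or, when $\delta$ is odd, $\{a,\delta-a,b,\delta-b\}$. We then classify the orbits into full orbits and short orbits. The short orbits are of type $\{0,v/4,v/2,3v/4\}$ (length $v/4$) or $\{0,a,v/2,v/2+a\}$ (length $v/2$).

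\emph{Step 2: the congruence on $v$.} The condition $v\equiv 2,4\pmod 6$ is inherited from SQS. For the modulus $24$ we count orbits of triples. Since $\mathbb Z_v$ acts on the $\binom v3$ triples, we count the triple orbits, together with the orbits of triples of special form $\{0,a,-a\}$ and $\{0,a,v/2\}$. Each symmetric block orbit covers a prescribed number of the special triple orbits, determined by its shape from Step 1. A symmetric block $\{a,-a,b,-b\}$, for instance, contains exactly two triples that are symmetric about some point. Equating the counts yields a parity or divisibility condition. It excludes $v\equiv 8,14,16,22\pmod{24}$, and in particular forces $v\not\equiv 0\pmod 8$. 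The case $v$ odd is already impossible for an SQS.

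\emph{Step 3: the prime condition.} Let $p$ be an odd prime divisor of $v$. We work modulo $p$ by projecting blocks via $\mathbb Z_v\to\mathbb Z_p$, or better by considering the triples $\{0,a,2a\}$ and $\{0,a,-a\}$ with $a$ in the subgroup of order $p$. Consider the multiplier action of $\langle -1\rangle$ combined with the "arithmetic progression" triples. Counting, for the subgroup $H$ of order $p$, the orbits of triples inside $H$ that are covered by blocks meeting $H$ in $3$ or $4$ points, gives a count of the form $(p-1)/4$ or $(p-1)(p-5)/\ldots$, which must be an integer. The result should be that $p\equiv 1 \pmod 4$. Moreover, $p\equiv 2\pmod 3$ can occur only in the form $p\equiv 5\pmod{12}$, which leaves $p\equiv 1,5\pmod{12}$. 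Concretely, we expect $p\equiv 3\pmod 4$ to be excluded because $-1$ is then a nonresidue. The blocks meeting $H$ in four points would have to form a symmetric structure on $H\cong\mathbb Z_p$. That structure gives an $S$-cyclic-type packing of triples of $\mathbb Z_p$ by sets $\{a,-a,b,-b\}$, and the count of triple orbits $\{0,a,-a\}$, namely $(p-1)/2$ of them, must be even.

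\emph{Main obstacle.} The main obstacle is Step 3. The trouble is that blocks may meet $H$ in exactly three points with the fourth point outside. Handling this requires a careful count over cosets of $H$, using the symmetric form of the blocks. For this step we would try first the approach of Grannell--Griggs: translate the symmetry into a statement about the differences $\pm a\pm b$ modulo $p$, and obtain a quadratic residue obstruction via counting solutions of $a^2\equiv -b^2$.
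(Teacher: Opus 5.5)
The paper does not prove this lemma; it quotes it from Diener and Grannell--Griggs. Your argument therefore has to stand on its own, and as written it does not. Steps~2 and~3 are announced rather than carried out, and the one concrete mechanism you offer for Step~3 is not valid.

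\textbf{Step~2: the kites are forced.} Since $-B=B+\delta$ is again a block, the block $B$ through the negation-fixed triple $\{0,a,-a\}$, with $a\notin\{0,v/2\}$, satisfies $-B=B$ by uniqueness. So its fourth point is fixed by negation and must be $v/2$. By translation, every isosceles and every right-angled triple lies in a kite $\{x,x+v/2,x+d,x-d\}$.
\begin{itemize}
\item A block with stabilizer of order $2$ translates to $\{0,v/2,a,a+v/2\}$. It would then coincide with $\{0,v/2,a,-a\}$, forcing $2a=v/2$. Hence there are no orbits of length $v/2$.
\item When $4\mid v$, there is exactly one orbit of length $v/4$.
\end{itemize}
The number of full orbits is then $(v-1)(v-2)/24$ or $(v-4)(v+1)/24$. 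This gives $v\equiv2\pmod8$ or $v\equiv4\pmod8$, hence $v\equiv2,4,10,20\pmod{24}$. Separately, your remark that $\{a,-a,b,-b\}$ contains exactly two triples symmetric about a point is false if it refers to isosceles triples. A generic such block contains none; what is true is that the reflection pairs its four triples.

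\textbf{Step~3: your ``main obstacle'' disappears once Step~1 is used.} A kite contains only right-angled and isosceles triples. Every other block has the form $\{s,s',t,t'\}$ with $s+s'=t+t'$, so each of its points equals $\alpha+\beta-\gamma$ for the other three. Let $H$ be the subgroup of order $p$; here $p\ge5$, since $3\nmid v$.
\begin{itemize}
\item A block containing an irregular triple inside a coset $y+H$ lies entirely in $y+H$.
\item All four of its triples are irregular. An odd-order coset contains no antipodal pair, and isosceles triples are already used by the kites, which are not contained in $y+H$.
\end{itemize}
Thus the $p(p-1)(p-5)/6$ irregular triples of one coset are partitioned by exactly $p(p-1)(p-5)/24$ blocks. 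This forces $8\mid(p-1)(p-5)$, i.e.\ $p\equiv1\pmod4$, which for a prime $p\ge5$ is the same as $p\equiv1,5\pmod{12}$. This is the count behind the paper's $(p-1)(p-5)/24$ Type~III base blocks.

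By contrast, your criterion that the $(p-1)/2$ orbits of triples $\{0,a,-a\}$ with $a\in H$ must be even has no basis. These triples are covered by the kites $\{0,v/2,a,-a\}$, whose fourth point $v/2$ lies outside $H$. There is one kite orbit per triple orbit, so no parity constraint arises. The quadratic-residue count of solutions of $a^2\equiv-b^2$ is neither needed nor developed.
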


Grannell and Griggs \cite[Section~3]{GrannellGriggsSurvey} conjectured that the necessary conditions in Lemma \ref{lem:intro-necessary} are sufficient. The same question was posed by Diener \cite{Diener} and was later included among the open problems in the survey of Hartman and Phelps \cite[Section~12.3, Problem 6]{HartmanPhelps1992}. Piotrowski \cite{Piotrowski} established the following reduction, which we state in an equivalent form for our purposes.

\begin{lemma}[{\cite[Satz~14.1]{Piotrowski}}]\label{lem:intro-piotrowski-equiv}
An $S$-cyclic $\operatorname{SQS}(v)$ exists if and only if $v\equiv2,4,10,20\pmod{24}$, every odd prime divisor $p$ of $v$ satisfies $p\equiv1$ or $5\pmod{12}$, and an $S$-cyclic $\operatorname{SQS}(2p)$ exists for every such prime $p$.
\end{lemma}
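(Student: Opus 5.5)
Lemma~\ref{lem:intro-piotrowski-equiv} is quoted from Piotrowski's thesis (Satz~14.1) in an equivalent form. So the proof I have in mind is a translation: recall what Satz~14.1 actually says, then check that it is logically equivalent to the statement above. Piotrowski's reduction is recursive. It builds $S$-cyclic systems of order $v$ from smaller ones via product constructions, essentially $v\mapsto 2v$ type doublings and $v,w\mapsto vw/2$ type multiplications for even orders. Its conclusion is that an $S$-cyclic $\operatorname{SQS}(v)$ exists whenever $v$ meets the congruence conditions and $S$-cyclic systems of the small "prime" orders $2p$ exist. Small base cases such as $v=4,10,20$ are handled directly.

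\textbf{Forward direction.} Suppose an $S$-cyclic $\operatorname{SQS}(v)$ exists. The congruence and prime conditions follow immediately from Lemma~\ref{lem:intro-necessary}. Now let $p$ be an odd prime dividing $v$. I would derive an $S$-cyclic $\operatorname{SQS}(2p)$ from the given system using Piotrowski's "divisor" or "subsystem" result: for each divisor $d$ of $v$ with $d$ even and $v/d$ odd, the blocks meeting the subgroup $(v/d)\mathbb Z_v\cong\mathbb Z_d$ in a suitable way induce a derived $S$-cyclic system. If the thesis does not state this directly, I would argue it as follows. The triples inside the coset union $p'\mathbb Z_v\cup(p'\mathbb Z_v+v/2)$ determine blocks closed under the subgroup structure, and the symmetry condition $-B=B+\delta$ is inherited because both negation and the subgroup are preserved.

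\textbf{Reverse direction.} Assume the congruence conditions, the prime condition, and the existence of an $S$-cyclic $\operatorname{SQS}(2p)$ for each such $p$. Factor $v=2^a\prod p_i^{e_i}$. The congruence $v\equiv 2,4,10,20\pmod{24}$ forces $a\in\{1,2\}$; if $a=2$, I need $3\nmid v$ and the appropriate residue. I would then assemble $v$ from the ingredients $2p_i$ (and $4$ when $a=2$) using Piotrowski's multiplication theorems, applying them repeatedly, and check that each intermediate order satisfies the hypotheses of the recursive step.

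\textbf{Main obstacle.} The main difficulty is the bookkeeping that matches the exact hypotheses of Satz~14.1 to the simplified statement here. In particular I must verify that the forward implication "$S$-cyclic $\operatorname{SQS}(v)$ $\Rightarrow$ $S$-cyclic $\operatorname{SQS}(2p)$" is either part of Piotrowski's statement or follows from his divisor results. If it does not, the "only if" direction would need the separate inheritance argument sketched above, and I expect that to be the hardest point.
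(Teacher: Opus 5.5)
Your overall route matches the paper's. The paper gives no proof of this lemma: it quotes it from Piotrowski's Satz~14.1, and it uses only the ``if'' direction (in the proof of Theorem~\ref{thm:spectrum-main}). Your reverse direction likewise just defers to Piotrowski's product constructions. That is acceptable as a citation. However, your description of those constructions (doublings $v\mapsto 2v$, products $v,w\mapsto vw/2$) is guesswork, not a verified account of Satz~14.1.

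The genuine gap is in the self-contained fallback you offer for the ``only if'' direction, which you call the main obstacle. You assert that the triples inside the subgroup of order $2p$ ``determine blocks closed under the subgroup structure'', but you give no reason. You use $S$-cyclicity only to inherit the symmetry, and closure is not a formal consequence of cyclicity alone.

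The missing idea is a structural description of the blocks. If $-B=B+\delta$, then $B$ is invariant under the involution $r(x)=-x-\delta$. Since $v$ is even, the fixed points of $r$ in $\mathbb Z_v$ are either absent or form a pair $\{f,f+v/2\}$. Hence every block is one of two kinds:
\begin{itemize}
\item a trapezoid $\{x,r(x),y,r(y)\}$ with $x+r(x)=y+r(y)$;
\item a kite $\{f,f+v/2,y,2f-y\}$.
\end{itemize}
In either case, the fourth point is obtained from any three by sums, differences, and adding $v/2$.

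Consequently, let $H$ be any subgroup of even order, so that $v/2\in H$. The block through a triple of $H$ lies in $H$. The blocks inside $H$ then form an $H$-invariant $\operatorname{SQS}(|H|)$. It is symmetric because $\delta=-x-y\in H$ for $x,y\in B$. Taking $|H|=2p$ gives the required $S$-cyclic $\operatorname{SQS}(2p)$.

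Your condition ``$d$ even and $v/d$ odd'' is also the wrong restriction: when $4\mid v$ it excludes $d=2p$ altogether, and only the evenness of $d$ matters. With this fix the ``only if'' direction takes a few lines. The real content of the lemma is the ``if'' direction, which both you and the paper delegate to Piotrowski.
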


It follows from Lemma \ref{lem:intro-piotrowski-equiv} that the existence problem for $S$-cyclic Steiner quadruple systems is reduced to the construction of $S$-cyclic $\operatorname{SQS}(2p)$ for primes $p\equiv1,5\pmod{12}$. The construction of these systems of order $2p$ has a long history. Fitting~\cite{Fitting1915} introduced a graph-theoretic approach to the construction of cyclic Steiner quadruple systems in 1915, which was subsequently developed by K\"ohler~\cite{Kohler} in 1979. K\"ohler introduced the graph $\mathcal K_v$ for the construction of $S$-cyclic $\operatorname{SQS}(v)$. For $v\equiv2,10\pmod{24}$, a $1$-factor in $\mathcal K_v$ yields a strictly cyclic and $S$-cyclic $\operatorname{SQS}(v)$. For $v=2p>10$, where $p$ is a prime with $p\equiv1$ or $5\pmod{12}$, $\mathcal K_{2p}$ has two components, one of which has an explicit $1$-factor. Thus the problem reduces to finding a $1$-factor in the remaining component, which is isomorphic to $\mathcal K_p$ as described in Section~\ref{sec:auxiliary}. When $p\equiv5\pmod{12}$, K\"ohler further used affine orbits to obtain a smaller graph, and constructed $S$-cyclic $\operatorname{SQS}(2p)$ for all primes $p\equiv5\pmod{12}$ with $p\le233$; see \cite[Section~3]{GrannellGriggsSurvey} for more details.

Siemon~\cite{Siemon1987,Siemon1989,SiemonKohler} developed K\"ohler's orbit-graph approach, studying the structure of the resulting graphs and their $1$-factors. In particular, he related
the existence of the required $1$-factors to the bridgelessness of certain K\"ohler graphs and their reduced graphs. For primes $p\equiv53,77\pmod{120}$, Siemon~\cite{Siemon1991} reduced the construction of $S$-cyclic $\operatorname{SQS}(2p)$ to a number-theoretic condition and verified this condition for all admissible primes $p<500000$. His later work~\cite{Siemon1998} further related the bridgelessness
of the relevant orbit graphs to a number-theoretic conjecture on complete intervals, which remained unresolved in full generality.

Bitan and Etzion~\cite{BitanEtzion1993} applied the K\"ohler orbit-graph method to orders $4p$ and used it to construct $S$-cyclic $\operatorname{SQS}(4p)$ for every prime $p\equiv5\pmod{12}$ with $p<1500000$. 

Despite these developments, the existence of an $S$-cyclic $\operatorname{SQS}(2p)$ for every prime $p\equiv1$ or $5\pmod{12}$ remained open. In this paper, we extend K\"ohler's orbit-graph method to establish the existence of an $S$-cyclic $\operatorname{SQS}(2p)$ for every such prime $p$. By Lemma~\ref{lem:intro-piotrowski-equiv}, this settles the existence problem for $S$-cyclic Steiner quadruple systems.

The main results of this paper are stated below.

\begin{theorem}\label{thm:spectrum-main}
An $S$-cyclic $\operatorname{SQS}(v)$ exists if and only if $v\equiv2,4,10,20\pmod{24}$ and every odd prime divisor $p$ of $v$ satisfies $p\equiv1$ or $5\pmod{12}$.
\end{theorem}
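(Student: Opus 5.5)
By Lemma~\ref{lem:intro-necessary}, the stated conditions are necessary. By Lemma~\ref{lem:intro-piotrowski-equiv}, sufficiency reduces to one claim: for every prime $p\equiv1$ or $5\pmod{12}$ there is an $S$-cyclic $\operatorname{SQS}(2p)$. Every such prime satisfies $p\equiv1\pmod4$, so it is enough to construct an $S$-cyclic $\operatorname{SQS}(2p)$ for all primes $p\equiv1\pmod4$; this is the statement the abstract announces as the main step. The theorem then follows by combining the two lemmas. The remainder of the plan concerns this construction.

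I would work in K\"ohler's graph $\mathcal K_{2p}$. Its vertices correspond to the symmetric triple orbits that need to be covered, and its edges correspond to candidate symmetric base blocks $\{0,a,b,a+b\}$-type quadruples up to translation and negation. A $1$-factor of $\mathcal K_{2p}$ yields an $S$-cyclic system. For $2p>10$ the graph splits into two components. One component has an explicit $1$-factor. The other is isomorphic to $\mathcal K_p$, so the task is to exhibit a perfect matching of $\mathcal K_p$ for every prime $p\equiv1\pmod4$. Small primes, say $p\le 13$ or up to some modest bound, I would handle by direct computation or by citing K\"ohler's tables.

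For large $p$ I would use the multiplicative structure. The group $\mathbb Z_p^*/\{\pm1\}$, and more generally the affine maps $x\mapsto ux$, act on $\mathcal K_p$ as automorphisms, so one can pass to the orbit (reduced) graph. A perfect matching in $\mathcal K_p$ is then obtained by lifting a suitable matching, or a family of cycles and paths, from the quotient. I would describe the edges of $\mathcal K_p$ explicitly: vertex pairs $\{x,-x\}$ joined according to relations like $y=\pm(1\pm x)$ or $y=\pm x/(1\pm x)$. Each vertex then has small constant degree, and the graph behaves like a union of orbits under the group generated by these M\"obius-type maps. My approach would be to show that $\mathcal K_p$, or a spanning subgraph of it, is regular of degree at least $2$ and decomposes into even cycles, or is bridgeless cubic. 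In the cubic case Petersen's theorem gives a $1$-factor, and this connects to Siemon's bridgelessness criterion. If instead the graph is a union of cycles given by orbits of an involution composition, a parity argument on cycle lengths would be the target. Alternatively, I would use Tutte's condition with the structure from the multiplicative group, choosing matching edges according to quadratic residuosity of $x$ or $1+x$, since $-1$ is a square when $p\equiv1\pmod 4$.

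The main obstacle is proving that a perfect matching exists uniformly in $p$, which is exactly where Siemon's approach stalled on a number-theoretic conjecture. I would first try to avoid bridgelessness altogether. The idea is to exhibit an explicit matching rule: pair each vertex via the map $x\mapsto 1-x$ or $x\mapsto x^{-1}$ according to a character condition, and handle the finitely many fixed points or conflicts (such as $x$ with $x^2=-1$, $x=2$, or $x=1/2$) by local surgery along alternating paths. The delicate part is the verification that these conflicts can always be resolved. That check needs character-sum (Weil) estimates for large $p$ together with a finite computer check for the remaining small primes.
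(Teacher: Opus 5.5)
Your outer reduction matches the paper's. Necessity is Lemma~\ref{lem:intro-necessary}, and sufficiency goes through Lemma~\ref{lem:intro-piotrowski-equiv} to order $2p$. The Type~I and Type~II orbits are explicit, so what remains is a $1$-factor of $\mathcal K_p$ for every prime $p\equiv1\pmod4$. That reduction is classical, however. The whole content of the theorem is the uniform existence of this $1$-factor, and your proposal does not give an argument for it. The routes you list are not carried out, and as stated they would not work:
\begin{itemize}
\item Petersen's theorem needs bridgelessness, and Siemon reduced bridgelessness of the relevant orbit graphs to a number-theoretic conjecture that is still open. Moreover, $\mathcal K_p$ is not even cubic: the vertices in the affine orbit $\mathcal T(-2)$ have degree $2$.
\item The character-based rule is never specified. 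Any rule that picks a vertex's partner by a character value is a matching only if the rule at the partner points back. That consistency requirement is itself a character condition, and it typically fails on a positive proportion of vertices. Weil bounds would then measure a conflict set of size about $cp$, not remove it, so ``local surgery'' on finitely many conflicts is not available.
\end{itemize}

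The paper gets uniformity from a different set of ideas, none of which appear in your plan.
\begin{itemize}
\item It passes to the affine-orbit multigraph $G_p$ (or $G_p^*$ when $p\equiv1\pmod{12}$). This graph is cubic apart from explicitly identified defect vertices $\mathcal T(-2)$, $\mathcal T(\mathrm{i})$, $\mathcal T(\rho)$, $\mathcal T(-\zeta)$.
\item A Tutte counting argument shows that a bridge matters only when it cuts off an odd component avoiding those vertices (Conditions~\ref{cond:bridge} and~\ref{cond:p1-bridge}). Full bridgelessness is never needed.
\item Such a bad component yields a $\sigma,\theta$-invariant parameter set $Y$ with $|Y\mathbin{\triangle}(-Y)|=4$. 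This forces $Y$ to be two intervals (Lemma~\ref{lem:two-interval-form}). Iterating $z\mapsto1/(z+1)$ on $Y$ then gives $p\mid\Lambda_\ell-\ell F_\ell$ (or its $\ell-1$ variant) and $p\mid\Pi_\ell^2-1$.
\item The uniform input is the spectral gap of the Schreier graph of $\operatorname{PSL}_2(\mathbb Z)$ on $\mathbb P^1(\mathbb F_p)$ (Lemma~\ref{lem:projective-spectral-gap}). It bounds $|Y|$ or its complement by $4000$, leaving a finite factorization check.
\item For $p\equiv1\pmod{12}$ there is a further obstruction your plan does not address. The exceptional orbit $\mathcal T(\zeta)$ has vertex stabilizers of order $3$ in $M_p$, so no matching of the quotient lifts orbit-by-orbit. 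The paper instead finds $1$-factors of vertex-deleted subgraphs and lifts them along alternating paths using coset transversals of $W$ (Lemmas~\ref{lem:p1-path-lift} and~\ref{lem:p1-multipath-lift}).
\end{itemize}
Without some substitute for these steps, the proposal does not prove the theorem.
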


Every $S$-cyclic $\operatorname{SQS}(v)$ with $v\equiv2,10\pmod{24}$ is strictly cyclic (cf. \cite[Theorem~2]{ChangEtAl2017}). We call an $\operatorname{SQS}(v)$ that is both $S$-cyclic and strictly cyclic \emph{strictly $S$-cyclic}. Theorem~\ref{thm:spectrum-main} therefore yields the following existence result.

\begin{theorem}\label{thm:strict-spectrum-main}
A strictly $S$-cyclic $\operatorname{SQS}(v)$ exists if and only if $v\equiv2,10\pmod{24}$ and every odd prime divisor $p$ of $v$ satisfies $p\equiv1$ or $5\pmod{12}$.
\end{theorem}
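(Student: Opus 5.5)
Theorem~\ref{thm:strict-spectrum-main} is to be read as a corollary of Theorem~\ref{thm:spectrum-main} together with the fact, quoted above from \cite[Theorem~2]{ChangEtAl2017}, that every $S$-cyclic $\operatorname{SQS}(v)$ with $v\equiv2,10\pmod{24}$ is strictly cyclic. The plan is therefore a short two-direction argument, taking Theorem~\ref{thm:spectrum-main} as established.

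For \textbf{necessity}, suppose a strictly $S$-cyclic $\operatorname{SQS}(v)$ exists. It is in particular $S$-cyclic, so Lemma~\ref{lem:intro-necessary} gives $v\equiv2,4,10,20\pmod{24}$, and every odd prime divisor $p$ of $v$ satisfies $p\equiv1$ or $5\pmod{12}$. It is also strictly cyclic, so the necessary condition for strictly cyclic systems recalled in the introduction gives $v\equiv2$ or $10\pmod{24}$. The only step worth checking here is that this condition is indeed forced by the absence of short orbits. If one wants to verify it directly, the route is a counting argument. The number of blocks, $v(v-1)(v-2)/24$, must be divisible by $v$, so $(v-1)(v-2)\equiv0\pmod{24}$. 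Combined with $v\equiv2,4\pmod 6$, this forces $v\equiv2,10\pmod{24}$; the cases $v\equiv4,20$ fail, since then $(v-1)(v-2)\equiv 6$ or $18$ modulo $24$ after a quick check. Intersecting the two lists of congruences gives exactly $v\equiv2,10\pmod{24}$.

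For \textbf{sufficiency}, let $v\equiv2,10\pmod{24}$ with every odd prime divisor $p\equiv1,5\pmod{12}$. These residues lie among $2,4,10,20$, so Theorem~\ref{thm:spectrum-main} supplies an $S$-cyclic $\operatorname{SQS}(v)$. By \cite[Theorem~2]{ChangEtAl2017}, this system is strictly cyclic, hence strictly $S$-cyclic.

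There is no real obstacle at this level. All the difficulty sits in Theorem~\ref{thm:spectrum-main}, namely the construction of $S$-cyclic $\operatorname{SQS}(2p)$ for all primes $p\equiv1\pmod4$ via perfect matchings in the K\"ohler graph component isomorphic to $\mathcal K_p$, combined with Lemma~\ref{lem:intro-piotrowski-equiv}. The only point needing care in the corollary itself is citing the strict-cyclicity congruence correctly.
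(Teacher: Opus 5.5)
Your proposal is correct and follows the paper's proof. Necessity comes from the $S$-cyclic necessary conditions plus the strictly cyclic congruence $v\equiv2,10\pmod{24}$. Sufficiency comes from Theorem~\ref{thm:spectrum-main} together with \cite[Theorem~2]{ChangEtAl2017} to exclude short orbits; the paper only adds that such orbits would have length $v/2$, since a block stabilizer has order dividing $\gcd(4,v)=2$. One harmless slip in your optional counting check: for $v\equiv20\pmod{24}$ one gets $(v-1)(v-2)\equiv19\cdot18\equiv6\pmod{24}$, not $18$, which still rules that case out.
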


The definition of an $S$-cyclic Steiner quadruple system extends naturally from cyclic groups to finite abelian groups. Let $(K,+)$ be a finite abelian group. A Steiner quadruple system $(K,\mathcal B)$ is called \emph{$K$-invariant} if $B+x\in\mathcal B$ for every $B\in\mathcal B$ and $x\in K$. We call a $K$-invariant Steiner quadruple system \emph{symmetric $K$-invariant} if, for every block $B$, there exists $y\in K$ such that $-B=B+y$. Taking $K=\mathbb Z_v$ recovers the definition of an $S$-cyclic $\operatorname{SQS}(v)$.

Ji and Lu~\cite{JiLu} showed that, once the necessary conditions on $K$ are satisfied, the existence of symmetric $K$-invariant Steiner quadruple systems reduces to the construction of $S$-cyclic
$\operatorname{SQS}(2p)$ for each odd prime divisor $p$ of $|K|$. Combining their reduction with Theorem~\ref{thm:spectrum-main} gives the following theorem.

\begin{theorem}\label{thm:abelian-spectrum}
Let $K$ be a finite abelian group of order $v$. A symmetric $K$-invariant $\operatorname{SQS}(v)$ exists if and only if $v\equiv2,4,8,10,16,20\pmod{24}$, no element of $K$ has order divisible by $8$, and every odd prime divisor $p$ of $v$ satisfies $p\equiv1$ or $5\pmod{12}$. Moreover, if a symmetric $K$-invariant $\operatorname{SQS}(v)$ exists, then every block orbit under $K$ has length $v$ if and only if $v\equiv2,10\pmod{24}$.
\end{theorem}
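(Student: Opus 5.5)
Theorem~\ref{thm:abelian-spectrum} will be derived as a corollary of Theorem~\ref{thm:spectrum-main} together with the reduction of Ji and Lu~\cite{JiLu}. I would not reprove the group-theoretic necessary conditions. Instead I would quote from \cite{JiLu} that a symmetric $K$-invariant $\operatorname{SQS}(v)$ can exist only if $v\equiv2,4,8,10,16,20\pmod{24}$, $K$ has no element of order divisible by $8$, and every odd prime $p\mid v$ satisfies $p\equiv1,5\pmod{12}$. I would also quote the companion sufficiency statement: under these conditions, a symmetric $K$-invariant $\operatorname{SQS}(v)$ exists provided an $S$-cyclic $\operatorname{SQS}(2p)$ exists for every odd prime divisor $p$ of $v$.

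For sufficiency, let $p$ be an odd prime divisor of $v$, so $p\equiv1$ or $5\pmod{12}$. Then $2p\equiv2$ or $10\pmod{24}$, and the only odd prime divisor of $2p$ is $p$ itself. Hence $v'=2p$ satisfies the hypotheses of Theorem~\ref{thm:spectrum-main}, which supplies an $S$-cyclic $\operatorname{SQS}(2p)$. (Equivalently, this is the main construction of the paper for primes $p\equiv1\pmod 4$.) Feeding these systems into the Ji--Lu reduction gives the existence statement for every admissible $K$. If $v$ has no odd prime divisor, then $v\in\{2,4,8,16\}$ up to the congruence conditions. This case must be covered directly by \cite{JiLu}, through the trivial systems for $v=2,4$ and the known symmetric systems on $\mathbb Z_2^3$ and $\mathbb Z_2^4$, $\mathbb Z_4\times\mathbb Z_2^2$, and similar groups. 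I would check that their theorem indeed handles it without extra input.

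For the orbit-length statement, the ``if'' direction uses the fact that $v\equiv2,10\pmod{24}$ forces $K\cong\mathbb Z_2\times H$ with $|H|$ odd. Suppose a block $B$ had a nontrivial stabilizer $S\le K$. Then $|S|$ divides $4$ and $B$ is a union of cosets of $S$. So $S$ is the subgroup of order $2$, and $B=\{a,a+s,b,b+s\}$. Combining this with $-B=B+y$ and the oddness of $|H|$ yields a triple covered twice, which is a contradiction. This is the argument of \cite[Theorem~2]{ChangEtAl2017}, adapted from $\mathbb Z_v$ to $K$.

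For the ``only if'' direction, the block count $v(v-1)(v-2)/24$ must be divisible by $v$ when all orbits are full. Hence $(v-1)(v-2)\equiv0\pmod{24}$, which excludes $v\equiv4,8,16,20\pmod{24}$. I expect this part to be routine. The main obstacle is simply making sure the cited reduction of \cite{JiLu} is stated in exactly the form needed, namely that its only missing ingredient is an $S$-cyclic $\operatorname{SQS}(2p)$, which Theorem~\ref{thm:spectrum-main} now provides.
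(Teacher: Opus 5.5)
Your route is the paper's. Existence comes from Theorem~\ref{thm:spectrum-main} at order $2p$, fed into the Ji--Lu reduction. The ``only if'' half of the orbit-length claim is the count $(v-1)(v-2)/24\in\mathbb Z$, which forces $v\equiv2\pmod 8$.

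A small point on necessity. The Ji--Lu theorem, as the paper uses it, gives only $v\equiv2,4\pmod6$ together with the existence of an $S$-cyclic $\operatorname{SQS}(2p)$ for each odd prime $p\mid v$. So you get $p\equiv1,5\pmod{12}$ from Lemma~\ref{lem:intro-necessary} applied at order $2p$. The mod-$24$ list then follows because the odd part of $v$ is $\equiv1\pmod4$, which excludes $v\equiv6\pmod8$. Powers of $2$ need no separate treatment: every $2^k$ meets the congruence, and the Ji--Lu condition on odd primes is vacuous.

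The step you leave undone is the ``if'' direction of the orbit-length claim, and the mechanism you indicate would fail as written. For $B=\{0,s,b,b+s\}$ one always has $-B=B-b$, so the symmetry of the short block itself carries no information. The contradiction has to come from a different block, as follows.

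\begin{itemize}
\item Since every block satisfies $-B=B+y$, negation is an automorphism of the system.
\item The set $\{0,b,-b\}$ is a genuine triple, because $2b\neq0$: indeed $b\notin\{0,s\}$ and $s$ is the only involution.
\item This triple is negation-invariant, so its unique block is negation-invariant too. Its fourth point $x$ therefore satisfies $2x=0$ and $x\neq0$, hence $x=s$.
\item Thus $\{0,s,b,-b\}$ is a block containing the triple $\{0,s,b\}$, which also lies in $B$. Uniqueness forces $-b=b+s$, i.e.\ $2b=s$.
\item Then $b$ has order $4$, which is impossible since the Sylow $2$-subgroup of $K$ has order $2$.
\end{itemize}

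This kite-block argument is exactly what the paper uses. It is the content hidden behind your appeal to \cite[Theorem~2]{ChangEtAl2017}, and you should state it explicitly rather than attribute the contradiction to $-B=B+y$ for the short block.
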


The remainder of this paper is organized as follows. Section~\ref{sec:kohler-method} reviews K\"ohler's construction and its reduction to a perfect matching problem. Section~\ref{sec:auxiliary} introduces the auxiliary covering problem and its affine normalization. Sections~\ref{sec:p5} and~\ref{sec:p1} treat the cases $p\equiv5\pmod{12}$ and $p\equiv1\pmod{12}$, respectively. Section~\ref{sec:existence} completes the constructions and proves the three main theorems. Section~\ref{sec:concluding-remarks} discusses further questions on cyclic Steiner quadruple systems and optical orthogonal codes.

\section{K\"ohler's method revisited}\label{sec:kohler-method}

This section reviews K\"ohler's orbit-graph method for constructing Steiner quadruple systems of order $2p$~\cite{Kohler,Siemon1987,SiemonKohler,Siemon1991}. We describe the prescribed block orbits and the triples they cover. The remaining covering problem is formulated as a $1$-factor problem in Section~\ref{sec:auxiliary}.

Let $p\ge5$ be prime. Following the geometric description in~\cite[VIII.10.2]{BethJungnickelLenz}, place the elements of $\mathbb Z_{2p}$ in their cyclic order at the vertices of a regular $2p$-gon. A $3$-subset (triple) of $\mathbb Z_{2p}$ is then represented by the triangle whose vertices are its three elements. Translation of the labels corresponds to rotation of the polygon, whereas negation corresponds to reflection. Thus, translation orbits of triples in $\mathbb Z_{2p}$ correspond to congruence classes of triangles under rotations.

A triple in $\mathbb Z_{2p}$ containing an antipodal pair, that is, two points differing by $p$ modulo $2p$, is represented by a \emph{right-angled triangle}. A triple of the form $\{j-d,j,j+d\}$, where $j,d\in\mathbb Z_{2p}$ and $d\notin\{0,p\}$, is represented by an \emph{isosceles triangle}. These two classes are disjoint since $2p\equiv2\pmod4$. The remaining triples are called \emph{irregular triangles}; see~\cite[VIII.10.3]{BethJungnickelLenz}.

A $4$-subset of $\mathbb Z_{2p}$ is called a \emph{trapezoid} if it has the form $\{x,x+a,x+b,x+a+b\}$ for some $x,a,b\in\mathbb Z_{2p}$. A $4$-subset of $\mathbb Z_{2p}$ is called a \emph{kite quadrangle} if it has the form $\{x,x+p,x+a,x-a\}$ for some $x,a\in\mathbb Z_{2p}$ with $a\notin\{0,p\}$; geometrically, its sides form two pairs of equal adjacent sides.

K\"ohler's construction prescribes the translation orbits of kite quadrangles and covers the remaining irregular triangles by trapezoids. For order $2p$, the latter problem splits according to parity~\cite[Section~3, p.~413]{GrannellGriggsSurvey}. Accordingly, the construction consists of the following three parts:
\begin{enumerate}[label=\Roman*.,leftmargin=*]
\item Kite quadrangles covering every right-angled and every isosceles triangle exactly once.
\item Trapezoids with two vertices of each parity, covering every irregular triangle containing points of both parities exactly once.
\item Trapezoids whose four vertices have the same parity, covering every irregular triangle whose vertices have the same parity exactly once.
\end{enumerate}
In Types~II and~III, only trapezoids containing no right-angled or isosceles triangle are used. The three classes of triples partition all triples in $\mathbb Z_{2p}$, so families with these covering properties together form an $\operatorname{SQS}(2p)$. For every kite quadrangle or trapezoid $B$, there exists $c\in\mathbb Z_{2p}$ such that $-B+c=B$. Hence negation preserves each selected translation orbit, and the resulting system is $S$-cyclic.

The Type~I and Type~II families have the following direct constructions.

\begin{lemma}[cf.~{\cite[Lemmas~4.2(ii) and~4.10]{MunemasaSawa}}]\label{lem:type-I}
Let $p\ge5$ be prime. The base blocks $\{0,p,i,-i\}$ in $\mathbb Z_{2p}$, where $i$ is an integer with $1\le i\le(p-1)/2$, represent full translation orbits covering every right-angled and every isosceles triangle exactly once and no irregular triangle.
\end{lemma}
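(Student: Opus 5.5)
The argument is a direct count. First I determine which triples each kite quadrangle in the family contains. Then I show the orbits cover all right-angled and isosceles triples with no repetition. Finally I match this against the total number of such translation orbits.

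\textbf{Step 1: the four triples of a base block.} Fix $B_i=\{0,p,i,-i\}$ with $1\le i\le (p-1)/2$. Its four $3$-subsets are $\{0,p,i\}$, $\{0,p,-i\}$, $\{p,i,-i\}$ and $\{0,i,-i\}$.
\begin{itemize}
\item The first two contain the antipodal pair $\{0,p\}$, so they are right-angled.
\item $\{0,i,-i\}=\{0-i,0,0+i\}$ is isosceles with apex $0$ and difference $i$.
\item $\{p,i,-i\}=\{p-(p-i),p,p+(p-i)\}$ is isosceles with apex $p$ and difference $p-i$.
\item Neither isosceles triple contains an antipodal pair. Indeed $2i\not\equiv p\pmod{2p}$ by parity, and $i\pm p\ne\pm i$.
\end{itemize}
So no irregular triangle is covered. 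Moreover $B_i+c=B_i$ would force $c\in\{0,p\}$. But $B_i+p=\{p,0,i+p,p-i\}$, and this equals $B_i$ only if $i+p\equiv\pm i$, which is impossible. Hence each orbit is full, of length $2p$.

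\textbf{Step 2: counting triples.} The right-angled triples number $p(2p-2)$: there are $p$ antipodal pairs, each completed by one of $2p-2$ further points. For isosceles triples $\{j-d,j,j+d\}$, the apex $j$ is determined by the triple, since the class is disjoint from the right-angled one and $2p\equiv 2\pmod 4$. The difference $d$ is determined up to sign. So there are $2p\cdot(2p-2)/2=2p(p-1)$ of them. Together this gives $4p(p-1)$ triples. The $(p-1)/2$ full orbits contain $2p\cdot(p-1)/2$ blocks, each carrying $4$ such triples, also $4p(p-1)$ in total. It therefore suffices to show that no triple is covered twice.

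\textbf{Step 3: injectivity, the main obstacle.} I would work with translation orbits of triples.
\begin{itemize}
\item Right-angled triples: every such triple is a translate of a unique $\{0,p,t\}$ with $t\in\{1,\dots,p-1\}$, normalizing so the antipodal pair is $\{0,p\}$ and the third point lies in $(0,p)$. The triple $\{0,p,i\}$ gives $t=i$. The triple $\{0,p,-i\}$, translated by $p$, becomes $\{p,0,p-i\}$, giving $t=p-i$. As $i$ runs over $1,\dots,(p-1)/2$, the values $i$ and $p-i$ exhaust $\{1,\dots,p-1\}$ exactly once.
\item Isosceles triples: orbits are indexed by the difference $d$ up to sign, with $d\in\{1,\dots,p-1\}$ after normalization. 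From $B_i$ we get $d=i$ and $d=p-i$, again exhausting $\{1,\dots,p-1\}$ exactly once.
\end{itemize}
Each orbit of triples has length $2p$. The apex makes the isosceles orbits full; for right-angled orbits one checks that $\{0,p,t\}+p=\{p,0,t+p\}\ne\{0,p,t\}$. Also, within a single block orbit, distinct translates of $B_i$ contribute distinct translates of each triple. So each triple is covered exactly once, which is consistent with the count in Step 2 and completes the proof.
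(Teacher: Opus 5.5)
Your proof is correct and follows essentially the paper's route: you normalize right-angled orbits to $\{0,p,t\}$ and isosceles orbits to $\{0,d,-d\}$ with $1\le t,d\le p-1$, observe that $B_i$ contributes the parameters $i$ and $p-i$ to each list, and note that these pairs partition $\{1,\ldots,p-1\}$. Your count in Step~2 is only a redundant consistency check, and your direct stabilizer computation replaces the paper's observation that a translation fixing $B_i$ must fix each of its four triangles. One correction: the uniqueness of the apex, which you use in Step~2 and for the isosceles normal form, follows from $3\nmid 2p$ (i.e.\ $p\neq3$), not from disjointness with right-angled triangles or $2p\equiv2\pmod4$. For example, in $\mathbb Z_6$ the triple $\{0,2,4\}$ contains no antipodal pair, yet every one of its vertices is an apex.
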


\begin{proof}
The translation orbits of right-angled triangles have unique representatives $\{0,p,a\}$ with $1\le a\le p-1$. Since $p\ne3$, every isosceles triangle has a unique apex, so its translation orbit has a unique representative $\{0,a,-a\}$ in the same range. For each $i$, the two right-angled triangles and two isosceles triangles in $\{0,p,i,-i\}$ belong to the translation orbits with parameters $i$ and $p-i$ in the respective lists. These pairs $\{i,p-i\}$ partition $\{1,\ldots,p-1\}$ as $i$ ranges from $1$ to $(p-1)/2$.

In particular, the four triangles in each base block lie in distinct translation orbits. Any translation preserving a base block must therefore fix each of its four triangles setwise; since $3\nmid2p$, every triple in $\mathbb Z_{2p}$ has a translation orbit of length $2p$, so such a translation must be zero. Hence each block orbit is full and covers each of these triangle orbits once.
\end{proof}

For the Type~II family, Grannell and Griggs~\cite[Section~3]{GrannellGriggsSurvey} stated the following direct construction in terms of difference quadruples without proof. Here we express the trapezoids by their vertex sets and include a proof for completeness.

\begin{lemma}\label{lem:type-II}
Let $p\ge5$ be prime. The base blocks $\{0,2i-1,2i+2j-1,4i+2j-2\}$ in $\mathbb Z_{2p}$, where $i$ and $j$ are integers with $1\le i\le\frac{p-3}{2}$ and $1\le j\le\frac{p-1}{2}-i$, represent full translation orbits covering every irregular triangle containing points of both parities exactly once and no other triangle.
\end{lemma}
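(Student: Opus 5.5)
The plan is to reduce the lemma to a counting argument plus an injectivity check, using a normal form for translation orbits of mixed-parity triangles. Write the base block as $\{0,a,c,a+c\}$ with $a=2i-1$ and $c=2i+2j-1$. Both $a$ and $c$ are odd, $1\le a<c$, and the constraint $i+j\le\frac{p-1}{2}$ gives $c\le p-2$. Hence $a,c\in\{1,3,\dots,p-2\}$ with $a<c$, and the vertices $0,a+c$ are even while $a,c$ are odd. Conversely, every pair of odd $a<c\le p-2$ arises from exactly one admissible $(i,j)$, so the blocks are indexed by unordered pairs of distinct odd integers in $[1,p-2]$. This gives $\binom{(p-1)/2}{2}=\frac{(p-1)(p-3)}{8}$ blocks.

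\textbf{Normal form.} A mixed-parity triple has a unique \emph{lone} vertex $x$, the one whose parity differs from the other two, so it can be written uniquely as $\{x,x+u,x+w\}$ with $u,w$ odd and distinct. Its translation orbit is determined by the unordered pair $\{u,w\}$ of odd residues modulo $2p$, and distinct pairs give distinct orbits. All orbits have length $2p$, since $3\nmid 2p$. In this form:
\begin{itemize}
\item the triangle is right-angled iff $p\in\{u,w\}$, because an antipodal pair cannot consist of two same-parity points as $p$ is odd;
\item it is isosceles iff $u=-w$, because apex $x$ is the only possibility: an apex at $x+u$ would force $w=2u$, which is impossible by parity.
\end{itemize}
So the mixed irregular orbits correspond bijectively to unordered pairs $\{u,w\}$ of distinct odd residues with $u,w\neq p$ and $u\neq -w$. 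Writing each such residue as $\pm t$ with $t\in\{1,3,\dots,p-2\}$, and counting pairs with $t_u\ne t_w$ together with all four sign choices, gives $4\binom{(p-1)/2}{2}=\frac{(p-1)(p-3)}{2}$ orbits. This equals four times the number of blocks.

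\textbf{Triangles of a block.} The four triangles of $\{0,a,c,a+c\}$ have lone vertices and pairs as follows:
\begin{itemize}
\item lone vertex $0$ gives $\{a,c\}$;
\item lone vertex $a$ (triangle $\{0,a,a+c\}$) gives $\{-a,c\}$;
\item lone vertex $c$ gives $\{a,-c\}$;
\item lone vertex $a+c$ gives $\{-a,-c\}$.
\end{itemize}
In the $\pm t$ encoding these are exactly the four sign patterns on the pair $\{t_u,t_w\}=\{a,c\}$. Since $a\neq c$ lie in $\{1,\dots,p-2\}$, none of these pairs contains $p$ or has the form $\{u,-u\}$. So every triangle is mixed and irregular, and no right-angled or isosceles triangle is covered.

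\textbf{Injectivity.} The map sending each (block, triangle) to its orbit is injective. The pair of absolute values recovers $\{a,c\}$, hence the block, and the sign pattern recovers the triangle. By the count above it is therefore a bijection onto the mixed irregular orbits. Injectivity within one block also shows that the four triangles lie in distinct orbits. Then, as in Lemma~\ref{lem:type-I}, a translation stabilizing a block would have to fix each triangle setwise, which forces it to be zero. So every block orbit is full and covers each of its four triangle orbits exactly once.

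The step I expect to need the most care is the normal-form claim: that $\{u,w\}$ is a complete orbit invariant, and that right-angled and isosceles triangles are characterized exactly as stated, including ruling out an apex at a non-lone vertex. After that, the argument is bookkeeping of the sign patterns.
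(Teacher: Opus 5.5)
Your proposal is correct and follows essentially the same route as the paper. Both arguments normalize a mixed-parity triangle by its lone vertex to an unordered pair of odd residues, read off the four sign patterns on $\{a,c\}$ from each block, and use the distinctness of the four triangle orbits to conclude that each block orbit is full. The only difference is cosmetic: the paper obtains surjectivity directly from the unique expression $\{\varepsilon r,\eta s\}$, while you obtain it from injectivity plus a count, which rests on the same $\pm t$ encoding.
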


\begin{proof}
Let $T$ be a triple in $\mathbb Z_{2p}$ containing points of both parities. Translate its unique vertex whose parity differs from the other two to $0$. Its translation orbit is then represented by an unordered pair of distinct odd residues in $\mathbb Z_{2p}$. The triangle is right-angled exactly when the pair contains $p$, and is isosceles exactly when the two residues are negatives of one another. All other pairs represent irregular triangles.

Set $r=2i-1$ and $s=2i+2j-1$. As $i$ and $j$ range over the stated integers, $(r,s)$ runs through all pairs of odd integers with $1\le r<s\le p-2$. The four triangles in $\{0,r,s,r+s\}$ represent the pairs
$$
\{r,s\},\quad\{-r,-s\},\quad\{-r,s\},\quad\{r,-s\}.
$$
Every unordered pair of distinct odd residues representing an irregular triangle has a unique expression $\{\varepsilon r,\eta s\}$ with $1\le r<s\le p-2$ odd and $\varepsilon,\eta\in\{1,-1\}$. Thus the displayed pairs represent all translation orbits of irregular triangles containing points of both parities, each exactly once. The four triangles in each base block lie in distinct translation orbits of length $2p$. As in the proof of Lemma~\ref{lem:type-I}, a translation preserving the block must fix each triangle, so the block orbit is full and the stated covering property follows.
\end{proof}

Types~I and~II have $(p-1)/2$ and $(p-1)(p-3)/8$ base blocks, respectively. Each Type~I base block contains three triangles of mixed parity, while each Type~II base block contains four. Together they cover
$$
\frac{3(p-1)}2+\frac{(p-1)(p-3)}2=\binom p2
$$
mixed-parity triangle translation orbits, as required. Only Type~III remains.

There are $2\binom p3$ triples in $\mathbb Z_{2p}$ whose vertices have the same parity. Among them, $p(p-1)$ are isosceles, since the apex has $2p$ choices and the nonzero even step is determined up to sign. None is right-angled. Thus the number of irregular triangles whose vertices have the same parity is
$$
2\binom p3-p(p-1)=\frac{p(p-1)(p-5)}3.
$$
Every block whose vertices have the same parity has a full translation orbit, since its stabilizer has order dividing $\gcd(4,2p)=2$ and translation by $p$ interchanges the two parity classes. In an exact Type~III covering, each orbit therefore accounts for $8p$ triangles, so $(p-1)(p-5)/24$ base blocks are required. The three base-block counts sum to
$$
\frac{p-1}{2}+\frac{(p-1)(p-3)}8+\frac{(p-1)(p-5)}{24}
=\frac{(2p-1)(2p-2)}{24},
$$
as required for a strictly cyclic $\operatorname{SQS}(2p)$.

\section{Auxiliary designs and affine normalization}\label{sec:auxiliary}

Let $p\ge5$ be prime. As in Section~\ref{sec:kohler-method}, place the elements $0,1,\ldots,p-1$ of $\mathbb F_p$ at the vertices of a regular $p$-gon in cyclic order. An isosceles triangle in $\mathbb F_p$ is a triple of the form $\{x-d,x,x+d\}$, where $x,d\in\mathbb F_p$ and $d\neq0$; all other triples in $\mathbb F_p$ are irregular. A trapezoid in $\mathbb F_p$ is a $4$-subset of the form $\{x,x+a,x+b,x+a+b\}$, where $x,a,b\in\mathbb F_p$. Thus the additive relations defining these figures are now interpreted in $\mathbb F_p$.

Every triangle in $\mathbb Z_{2p}$ whose vertices have the same parity can be translated to one with three even vertices. The bijection $2b\bmod 2p\mapsto b\bmod p$ ($b\in\mathbb Z$) identifies the even residues in $\mathbb Z_{2p}$ with the additive group of $\mathbb F_p$ and takes isosceles triangles to isosceles triangles. Two triangles with even vertices are translates in $\mathbb Z_{2p}$ if and only if their images are translates in $\mathbb F_p$. Hence the translation orbits of same-parity irregular triangles in $\mathbb Z_{2p}$ are in bijection with the translation orbits of irregular triangles in $\mathbb F_p$. We introduce the following auxiliary designs to describe the remaining covering problem on $\mathbb F_p$.

\begin{definition}
Let $p\ge5$ be prime. An $(\mathbb F_p,+)$-invariant family of $4$-subsets of $\mathbb F_p$, called \emph{blocks}, is denoted by $A(p)$ if every irregular triangle in $\mathbb F_p$ is contained in exactly one block of the family and no block contains an isosceles triangle. If every block of $A(p)$ is a trapezoid, the family is denoted by $A^*(p)$.
\end{definition}

Following the terminology for cyclic designs, we choose one representative from each translation orbit on $A(p)$ and call these representatives \emph{base blocks}.

Choose one base block from each translation orbit of an auxiliary design $A^*(p)$, and apply the inverse embedding $b\bmod p\mapsto2b\bmod 2p$ ($b\in\mathbb Z$) to every point of these blocks. Developing the resulting $4$-subsets under translations in $\mathbb Z_{2p}$ supplies the Type~III blocks. The number of irregular triangles in $\mathbb F_p$ is half the same-parity count in Section~\ref{sec:kohler-method}, namely $p(p-1)(p-5)/6$. Since $p\ge5$ is prime, every $4$-subset of $\mathbb F_p$ has a translation orbit of length $p$. Each block orbit in an auxiliary design $A(p)$ therefore covers $4p$ irregular triangles, so an auxiliary design $A(p)$ has $(p-1)(p-5)/24$ translation base blocks, in agreement with Section~\ref{sec:kohler-method}.

For an integer $v\ge4$, represent the elements of $\mathbb Z_v$ at the vertices of a regular $v$-gon. A triple $T$ in $\mathbb Z_v$ is called \emph{irregular} if its triangle is neither right-angled nor isosceles. A trapezoid $P$ in $\mathbb Z_v$ is a $4$-subset of the form $\{x,x+a,x+b,x+a+b\}$ for some $x,a,b\in\mathbb Z_v$; it is called \emph{admissible} if all four triples contained in it are irregular.

For a triple $T\subseteq\mathbb Z_v$ and a trapezoid $P\subseteq\mathbb Z_v$, write
$$
\mathcal T_{\pm}(T):=\{T+t,-T+t:t\in\mathbb Z_v\}, \qquad \mathcal P_{\pm}(P):=\{P+t,-P+t:t\in\mathbb Z_v\}
$$
for their orbits under translations and negation. For every trapezoid $P$, there exists $c\in\mathbb Z_v$ such that $-P+c=P$, so $\mathcal P_{\pm}(P)$ is also the translation orbit of $P$.

We use the following cyclic-group form of~\cite[Definition~2.1 and Lemma~2.2]{MunemasaSawa}.

\begin{definition}\label{def:full-kohler-graph}
Let $v\ge4$ be an integer. The \emph{full K\"ohler graph} $\mathcal K_v$ has as vertices the orbits $\mathcal T_{\pm}(T)$ of irregular triangles $T\subseteq\mathbb Z_v$. Its edges are the translation orbits $\mathcal P_{\pm}(P)$ of admissible trapezoids $P\subseteq\mathbb Z_v$. For $P=\{0,a,b,a+b\}$, the reflection $x\mapsto a+b-x$ pairs the four triples of $P$, and the edge $\mathcal P_{\pm}(P)$ joins $\mathcal T_{\pm}(\{0,a,b\})$ and $\mathcal T_{\pm}(\{0,a,a+b\})$.
\end{definition}

When $v=p$ is prime, we identify $\mathbb Z_p$ with the additive group of $\mathbb F_p$ and regard $\mathcal K_p$ as the graph on triangle and trapezoid orbits in $\mathbb F_p$.

For a prime $p>5$ with $p\equiv1$ or $5\pmod{12}$, the graph $\mathcal K_{2p}$ has two components: one consists of the orbits of irregular triangles containing points of both parities, and the other consists of those whose vertices have the same parity~\cite[Section~3]{GrannellGriggsSurvey}. The Type~II block orbits in Lemma~\ref{lem:type-II} form a $1$-factor of the first component. Choosing representatives with three even vertices and applying $2b\bmod 2p\mapsto b\bmod p$ gives an isomorphism from the second component to $\mathcal K_p$. Thus the remaining covering problem is equivalent to finding a $1$-factor (that is, a perfect matching) of $\mathcal K_p$.

\begin{lemma}[{\cite[Lemmas~2.3, 2.5 and~2.6]{MunemasaSawa}}]\label{lem:full-kohler-incidence}
Let $p\ge5$ be prime, and let $\mathcal K_p$ be the full K\"ohler graph in Definition~\ref{def:full-kohler-graph}. Every edge has two distinct endpoints, and the blocks in the translation orbit represented by that edge cover every triangle in either endpoint orbit exactly once. If $a,b\in\mathbb F_p$ and $\{0,a,b\}$ is irregular, the edges incident with $\mathcal T_{\pm}(\{0,a,b\})$ are the translation orbits of the admissible trapezoids among
$$
\{0,a,b,a+b\},\qquad \{0,a,b,b-a\},\qquad \{0,a,b,a-b\}.
$$
\end{lemma}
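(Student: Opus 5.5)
The plan is to treat the three assertions of Lemma~\ref{lem:full-kohler-incidence} separately. All three reduce to the reflection structure of a trapezoid and to a count of orbit lengths in $\mathbb F_p$.

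\textbf{Orbit sizes.} Since $p\ge5$ is prime, every triple and every $4$-subset of $\mathbb F_p$ has a translation orbit of length $p$. For an irregular triple $T$, the set $-T$ is not a translate of $T$. Indeed, if $-T=T+c$, then $x\mapsto c-x$ would be an involution preserving $T$. Because $|T|=3$ is odd and $p$ is odd, it fixes exactly one point $x_0=c/2$, and this point lies in $T$. It swaps the other two points, so $T=\{x_0-d,x_0,x_0+d\}$ is isosceles, a contradiction. Hence $|\mathcal T_{\pm}(T)|=2p$. For a trapezoid $P$, we already know $\mathcal P_\pm(P)$ is its translation orbit, of size $p$.

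\textbf{Endpoints and exact covering.} Let $P=\{0,a,b,a+b\}$ be admissible. The reflection $\rho:x\mapsto a+b-x$ maps $\{0,a,b\}$ to $\{a+b,b,a\}$ and maps $\{0,a,a+b\}$ to $\{a+b,b,0\}$. So the four triples of $P$ split into two $\rho$-pairs, each pair lying in a single $\mathcal T_\pm$-class, one of them negated-translated. Over the translation orbit of $P$, the triples lying in the first class therefore number $2p$. This equals $|\mathcal T_\pm(\{0,a,b\})|$. The triples in this class also exhaust the class: if $T$ is in the class, then $T+t$ or $-T+t$ lies in $P+t$ or in $\rho(P)+t=P+t$. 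So to get exactly-once covering, I only need that the two endpoint classes are distinct. Then each class receives exactly $2p$ of the $4p$ triple occurrences, and these are surjective onto a set of size $2p$, hence bijective.

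\textbf{Distinct endpoints (main obstacle).} I must show that $\{0,a,b\}$ is not equivalent to $\{0,a,a+b\}$ under translation and negation. Suppose $\pm\{0,a,b\}+t=\{0,a,a+b\}$. I would run a small case analysis matching difference multisets. The differences of $\{0,a,b\}$ are $\pm a,\pm b,\pm(b-a)$, and those of $\{0,a,a+b\}$ are $\pm a,\pm b,\pm(a+b)$. Equality forces $\{\pm(b-a)\}=\{\pm(a+b)\}$ as multisets, unless coincidences among the differences intervene. Thus $a=0$ or $b=0$ (impossible), or some degeneracy such as $2a=\pm b$ or $2b=\pm a$ occurs. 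Each such degeneracy makes one of the four triples isosceles, contradicting admissibility. I expect this bookkeeping, ruling out every coincident-difference case using $p\ne3$, to be the fiddliest step.

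\textbf{Incident edges.} Let $\{0,a,b\}$ be irregular, and suppose an admissible trapezoid orbit covers its class. Translating, some trapezoid $Q$ contains $\{0,a,b\}$ itself, and then $Q=\{0,a,b,x\}$. Being a trapezoid means that one of the three pairings of the four points has equal sums. These three conditions are $0+x=a+b$, $0+a=b+x$, and $0+b=a+x$, giving $x=a+b$, $a-b$, or $b-a$. Conversely, each of these three sets is a trapezoid, so the incident edges are the orbits of the admissible ones among them.
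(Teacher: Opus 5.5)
Your proposal is correct. The paper gives no argument of its own for this lemma; it only cites Lemmas~2.3, 2.5 and~2.6 of Munemasa and Sawa. Your proof is a self-contained verification built from the reflection pairing, a double count over the $p$ translates of $P$, and the equal-sum characterization of trapezoids. The double count is the same kind of counting the paper later uses in Lemma~\ref{lem:p5-trapezoid-count}.

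\textbf{Distinct endpoints.} The step you flag as the main obstacle is in fact immediate, because multiset cancellation is exact. Equality of the difference multisets $\{\pm a,\pm b,\pm(b-a)\}$ and $\{\pm a,\pm b,\pm(a+b)\}$ therefore always gives $\{b-a,a-b\}=\{a+b,-a-b\}$. Hence $2a=0$ or $2b=0$, so $a=0$ or $b=0$ because $p$ is odd. No case analysis of coincident differences is needed. Admissibility is not used for distinctness at all; it only guarantees that both endpoint classes are vertices of $\mathcal K_p$.

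\textbf{Minor slips.} Two small errors are harmless.
\begin{itemize}
\item From $-T=T+c$, the involution preserving $T$ is $x\mapsto -c-x$, not $x\mapsto c-x$.
\item In the surjectivity step, the negated copy satisfies $-\{0,a,b\}+t=\rho(\{0,a,b\})+(t-a-b)$. So it lies in $P+(t-a-b)$, not in $\rho(P)+t$.
\end{itemize}

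\textbf{Incident edges.} In the last part, the converse direction says that an admissible trapezoid $Q\supseteq\{0,a,b\}$ gives an edge incident with $\mathcal T_{\pm}(\{0,a,b\})$. This uses the fact that every triple of $Q$ lies in one of the two endpoint classes of its edge. That fact is exactly your reflection pairing, so it is worth saying explicitly.
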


Let a $1$-factor of $\mathcal K_p$ be given, and take all blocks in the translation orbits represented by its edges. By Lemma~\ref{lem:full-kohler-incidence}, every irregular triangle in $\mathbb F_p$ occurs in exactly one of these blocks, so they form an auxiliary design $A^*(p)$. Conversely, every block orbit of an auxiliary design $A^*(p)$ is an edge of $\mathcal K_p$. Exact coverage, together with Lemma~\ref{lem:full-kohler-incidence}, implies that exactly one of these edges is incident with each vertex. Thus the block orbits form a $1$-factor of $\mathcal K_p$.

For primes $p\equiv1$ or $5\pmod{12}$, combining this correspondence with the Type~I and Type~II families gives K\"ohler's construction of a strictly $S$-cyclic $\operatorname{SQS}(2p)$ from a $1$-factor of $\mathcal K_p$; see~\cite[Corollary~VIII.10.16]{BethJungnickelLenz}.

Multiplication by a nonzero element of $\mathbb F_p$ preserves irregular triangles and admissible trapezoids, and hence acts on the vertices and edges of $\mathcal K_p$. Since negation is already included in their defining orbits, this action factors through $M_p:=\mathbb F_p^\times/\{\pm1\}$. Grouping vertices and edges under this action replaces their translation-and-negation orbits by affine orbits.

\begin{lemma}\label{lem:multiplier-quotient}
Let $p\ge5$ be prime, and let $M_p:=\mathbb F_p^\times/\{\pm1\}$ act on the full K\"ohler graph $\mathcal K_p$ by multiplication of the point labels. For an irregular triangle $T\subseteq\mathbb F_p$, the union of the triangle families represented by the vertices in the $M_p$-orbit of $\mathcal T_{\pm}(T)$ is its affine orbit
$$
\{mT+c:m\in\mathbb F_p^\times,\ c\in\mathbb F_p\}.
$$
For an admissible trapezoid $P\subseteq\mathbb F_p$, the union of the block families represented by the edges in the $M_p$-orbit of $\mathcal P_{\pm}(P)$ is its affine orbit
$$
\{mP+c:m\in\mathbb F_p^\times,\ c\in\mathbb F_p\}.
$$
\end{lemma}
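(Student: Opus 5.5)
This is a direct unwinding of the definitions; I plan to handle the triangle case in full and then note that the trapezoid case is identical. First I check that the action is well defined. For $m\in\mathbb F_p^\times$, multiplication by $m$ is an automorphism of $(\mathbb F_p,+)$. It sends $\{x-d,x,x+d\}$ to $\{mx-md,mx,mx+md\}$ with $md\ne0$, so it maps isosceles triangles to isosceles triangles, and likewise for the inverse $m^{-1}$. Hence it preserves the set of irregular triangles. It maps a trapezoid $\{x,x+a,x+b,x+a+b\}$ to $\{mx,mx+ma,mx+mb,mx+ma+mb\}$, which is again a trapezoid, and it is admissible whenever the original is, since its four triples are images of irregular triples.

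Moreover $m(T+t)=mT+mt$ and $m(-T+t)=-mT+mt$, so $m\,\mathcal T_{\pm}(T)=\mathcal T_{\pm}(mT)$. The same identity holds for $\mathcal P_{\pm}$. Since $\mathcal T_{\pm}(-T)=\mathcal T_{\pm}(T)$, the elements $m$ and $-m$ act identically, so the action factors through $M_p$. Incidence is preserved because the endpoints $\{0,a,b\},\{0,a,a+b\}$ of $\mathcal P_\pm(\{0,a,b,a+b\})$ are mapped to the endpoints $\{0,ma,mb\},\{0,ma,ma+mb\}$ of the image edge.

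Next I compute the union for a triangle. The $M_p$-orbit of the vertex $\mathcal T_{\pm}(T)$ is $\{\mathcal T_{\pm}(mT):m\in\mathbb F_p^\times\}$, because choosing a coset representative does not change the vertex. The union of the corresponding triangle families is
$$
\bigcup_{m\in\mathbb F_p^\times}\{mT+c,\,-mT+c:c\in\mathbb F_p\}=\{mT+c:m\in\mathbb F_p^\times,\ c\in\mathbb F_p\},
$$
since $-m$ ranges over $\mathbb F_p^\times$ as $m$ does. The inclusion "$\supseteq$" is immediate, because each $mT+c$ lies in $\mathcal T_{\pm}(mT)$.

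For a trapezoid $P$ the computation is the same, with $\mathcal P_{\pm}(mP)$ in place of $\mathcal T_\pm(mT)$. Here one may additionally use the remark preceding Definition~\ref{def:full-kohler-graph}: $\mathcal P_\pm(mP)$ equals the translation orbit of $mP$, and $-mP$ is itself a translate of $mP$.

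No step is a real obstacle. The only point needing care is the well-definedness of the action on vertices and edges, namely that $m$ preserves irregularity and admissibility and commutes appropriately with translation and negation, which I have checked above.
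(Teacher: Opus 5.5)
Your proposal is correct and follows essentially the same route as the paper. Both arguments rest on the fact that translations and negation are already built into $\mathcal T_{\pm}(T)$ and $\mathcal P_{\pm}(P)$, so adjoining the multipliers $m\in\mathbb F_p^\times$ yields exactly the affine orbit. You compute the union directly, while the paper states this as the criterion that $\mathcal T_{\pm}(T)$ and $\mathcal T_{\pm}(T')$ lie in the same $M_p$-orbit if and only if $T'=mT+c$. Your well-definedness checks (preservation of irregularity and admissibility, and the action factoring through $\{\pm1\}$) correspond to the paragraph the paper places just before the lemma.
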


\begin{proof}
Two vertices $\mathcal T_{\pm}(T)$ and $\mathcal T_{\pm}(T')$ lie in the same $M_p$-orbit if and only if $T'=mT+c$ for some $m\in\mathbb F_p^\times$ and $c\in\mathbb F_p$, since translations and negation are already included in $\mathcal T_{\pm}(T)$. The same argument applies to the block orbits $\mathcal P_{\pm}(P)$.
\end{proof}

The resulting orbit multigraph has one vertex for each affine orbit of irregular triangles and one edge for each affine orbit of admissible trapezoids. An edge may become a loop when its two endpoints belong to the same affine triangle orbit. Distinct affine trapezoid orbits remain distinct edges even when they have the same endpoints.

We describe these vertices and edges by normalizing triangle representatives to $\{0,1,a\}$ and trapezoid representatives to $\{0,1,a,1+a\}$. The following two subsections determine when different parameters represent the same affine orbit and which triangle orbits occur in each trapezoid orbit.

\subsection{Affine orbits of triangles}

Let $\operatorname{Aff}(\mathbb F_p)$ denote the group of maps of the form
$$
x\mapsto mx+c, \qquad m\in\mathbb F_p^\times,\quad c\in\mathbb F_p,
$$
under composition. This group acts on subsets of $\mathbb F_p$ by applying a map to each point. Every triple of distinct points in $\mathbb F_p$ is affinely equivalent to a triple $\{0,1,a\}$ with $a\in\mathbb F_p\setminus\{0,1\}$, where the value of $a$ depends on the ordering used to send two of the points to $0$ and $1$.

\begin{lemma}\label{lem:anharmonic-parameters}
Let $p$ be an odd prime, let $(x,y,z)$ be an ordering of three distinct points of $\mathbb F_p$, and set $a=(z-x)/(y-x)$. For each ordering of the three points, apply the affine map sending its first point to $0$ and its second point to $1$. The possible images of the third point form the set
$$
H(a):=\left\{a,a^{-1},1-a,(1-a)^{-1},\frac{a-1}{a},\frac{a}{a-1}\right\}.
$$
\end{lemma}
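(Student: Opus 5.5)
The plan is to compute the image of the third point for each of the six orderings directly. First, for an ordering $(u,v,w)$ of three distinct points, the affine map sending $u\mapsto0$ and $v\mapsto1$ exists and is unique, namely $x\mapsto(x-u)/(v-u)$, since $v\neq u$. It therefore sends $w$ to $(w-u)/(v-u)$, and the problem reduces to evaluating this cross-ratio-type expression for the six permutations of $(x,y,z)$. Because the points are distinct, the value $a=(z-x)/(y-x)$ satisfies $a\notin\{0,1\}$. This guarantees that all the expressions in $H(a)$ are defined, with denominators $a$, $1-a$ and $a-1$ all nonzero.

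Next, normalize by the map $\varphi(t)=(t-x)/(y-x)$. This sends $x,y,z$ to $0,1,a$. Since the affine maps form a group, the image of the third point under the normalizing map for an ordering of $\{x,y,z\}$ equals the image under the normalizing map for the corresponding ordering of $\{0,1,a\}$. Indeed, composing the normalizing map for $\{0,1,a\}$ with $\varphi$ sends the first and second points to $0$ and $1$, and by uniqueness it is the required map. So it suffices to treat the triple $(0,1,a)$ and compute $(w-u)/(v-u)$ for the six orderings:
\[
(0,1,a)\mapsto a,\quad (1,0,a)\mapsto 1-a,\quad (0,a,1)\mapsto a^{-1},\quad (a,0,1)\mapsto \tfrac{a-1}{a},\quad (1,a,0)\mapsto \tfrac{1}{1-a},\quad (a,1,0)\mapsto\tfrac{a}{a-1}.
\]
Each of these is a one-line check. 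For example, $(a,1,0)$ gives $(0-a)/(1-a)=a/(a-1)$. Collecting the six values gives exactly $H(a)$.

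There is no real obstacle here. The only point needing care is the reduction to $(0,1,a)$, that is, the uniqueness of the normalizing affine map, together with keeping track of which permutation gives which of the six anharmonic values. Oddness of $p$ is not really needed for the computation. It is presumably stated for consistency with later use, where coincidences among the six values, such as $a=-1$ or $a=2$, or $a^2-a+1=0$, will matter, and the statement concerns the set rather than its size.
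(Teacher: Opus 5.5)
Your proposal is correct and takes essentially the paper's approach: you evaluate $(w-u)/(v-u)$ for each of the six orderings, and all six of your values are right. The only difference is cosmetic: you first normalize to $(0,1,a)$ using uniqueness of the affine map, whereas the paper computes directly from $z-x=a(y-x)$ and $z-y=(a-1)(y-x)$.
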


\begin{proof}
Since $a=(z-x)/(y-x)$, we have $z-x=a(y-x)$ and $z-y=(a-1)(y-x)$. For the six orderings $(x,y,z)$, $(x,z,y)$, $(y,x,z)$, $(y,z,x)$, $(z,x,y)$, and $(z,y,x)$, the affine map sending the first point to $0$ and the second to $1$ sends the third point to
$$
\frac{z-x}{y-x}=a,\quad
\frac{y-x}{z-x}=a^{-1},\quad
\frac{z-y}{x-y}=1-a,\quad
\frac{x-y}{z-y}=(1-a)^{-1},\quad
\frac{y-z}{x-z}=\frac{a-1}{a},\quad
\frac{x-z}{y-z}=\frac{a}{a-1},
$$
respectively. These six values form $H(a)$ and need not be distinct.
\end{proof}

For $a\in\mathbb F_p\setminus\{0,1\}$, let
$$
\mathcal T(a):=\operatorname{Orb}_{\operatorname{Aff}(\mathbb F_p)}(\{0,1,a\})
=\bigl\{\{c,m+c,ma+c\}:m\in\mathbb F_p^\times,\ c\in\mathbb F_p\bigr\}
$$
be the affine orbit of the triple $\{0,1,a\}$. It is also the union of $\mathcal T_{\pm}(\{0,m,ma\})$ over all $m\in\mathbb F_p^\times$. Thus $H(a)$ is a set of parameters, while $\mathcal T(a)$ is a family of triples in $\mathbb F_p$.

\begin{lemma}\label{lem:affine-triple-orbits}
Let $p$ be an odd prime, and let $a,b\in\mathbb F_p\setminus\{0,1\}$. Then $\mathcal T(a)=\mathcal T(b)$ if and only if $b\in H(a)$.
\end{lemma}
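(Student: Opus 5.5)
Both directions will be proved straight from Lemma~\ref{lem:anharmonic-parameters}.

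For the ``if'' direction, suppose $b\in H(a)$. Apply Lemma~\ref{lem:anharmonic-parameters} to the ordering $(0,1,a)$ of the triple $\{0,1,a\}$; this ordering has parameter $(a-0)/(1-0)=a$. The lemma then gives an ordering $(x,y,z)$ of $\{0,1,a\}$ and an affine map $\phi\in\operatorname{Aff}(\mathbb F_p)$ with $\phi(x)=0$, $\phi(y)=1$ and $\phi(z)=b$. Such a map exists since $y\ne x$: take $\phi(t)=(t-x)/(y-x)$. Then $\phi(\{0,1,a\})=\{0,1,b\}$, so $\{0,1,b\}\in\mathcal T(a)$. Since $\mathcal T(a)$ is a group orbit, $\mathcal T(b)=\mathcal T(a)$.

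For the ``only if'' direction, suppose $\mathcal T(a)=\mathcal T(b)$. Then $\{0,1,b\}=\phi(\{0,1,a\})$ for some $\phi(t)=mt+c$ with $m\ne0$. Let $x=\phi^{-1}(0)$, $y=\phi^{-1}(1)$ and $z=\phi^{-1}(b)$. These are the three points $0,1,a$ in some order. Because $\phi$ is affine with $\phi(x)=0$ and $\phi(y)=1$, it coincides with the unique affine map $t\mapsto(t-x)/(y-x)$ sending $x\mapsto0$ and $y\mapsto1$. Hence $b=(z-x)/(y-x)$. This is the image of the third point for the ordering $(x,y,z)$. By Lemma~\ref{lem:anharmonic-parameters}, applied with the ordering $(0,1,a)$ as reference, this value lies in $H(a)$.

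The only point that needs care is the uniqueness used in the ``only if'' direction. An affine map on $\mathbb F_p$ is determined by its values at two distinct points, which is immediate from solving $mx+c=0$ and $my+c=1$. There is also a bookkeeping point: Lemma~\ref{lem:anharmonic-parameters} is stated for an arbitrary ordering with parameter $a$, so it must be applied with the reference ordering $(0,1,a)$, whose parameter is exactly $a$. I expect no real obstacle; the argument is a direct unpacking of the definitions.
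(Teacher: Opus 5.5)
Your proposal is correct and follows essentially the same route as the paper. Both directions are read off from Lemma~\ref{lem:anharmonic-parameters}, taking the preimages of $0$ and $1$ followed by the remaining point as the ordering. Your explicit remark that an affine map is determined by its values at two distinct points fills in a step the paper leaves implicit.
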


\begin{proof}
Suppose first that $\mathcal T(a)=\mathcal T(b)$. An affine map then sends $\{0,1,a\}$ to $\{0,1,b\}$. The preimages of $0$ and $1$, followed by the remaining point, give an ordering of $\{0,1,a\}$, so Lemma~\ref{lem:anharmonic-parameters} gives $b\in H(a)$. Conversely, if $b\in H(a)$, Lemma~\ref{lem:anharmonic-parameters} gives an ordering of $\{0,1,a\}$ for which the affine map sending the first two points to $0$ and $1$ sends the third point to $b$, and hence $\mathcal T(a)=\mathcal T(b)$.
\end{proof}

For $a\in\mathbb F_p\setminus\{0,1\}$, the triangle $\{0,1,a\}$ is isosceles if and only if one of its points is the average of the other two in $\mathbb F_p$. This gives $a\in\{-1,2,2^{-1}\}$, and
$$
H(-1)=H(2)=H(2^{-1})=\{-1,2,2^{-1}\}.
$$
Thus all isosceles triangles in $\mathbb F_p$ lie in a single affine orbit. Define
$$
X_p:=\mathbb F_p\setminus\{0,1,-1,2,2^{-1}\}.
$$
The parameters in $X_p$ are precisely those for which $\{0,1,a\}$ is irregular.

\begin{lemma}\label{lem:size-of-Ha}
Let $p$ be an odd prime, and let $a\in X_p$. Then $|H(a)|<6$ if and only if $a^2-a+1=0$, in which case $H(a)=\{a,a^{-1}\}$. An element $a\in X_p$ satisfying this equation exists if and only if $p\equiv1\pmod3$.
\end{lemma}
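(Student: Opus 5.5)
The plan is to use the fact that $H(a)$ is the orbit of $a$ under the group $G$ of six maps
$$
a\mapsto a,\ a^{-1},\ 1-a,\ (1-a)^{-1},\ \frac{a-1}{a},\ \frac{a}{a-1},
$$
which is isomorphic to $S_3$. This group structure can be read off from Lemma~\ref{lem:anharmonic-parameters}, since each map corresponds to a permutation of the ordering $(x,y,z)$. By the orbit–stabilizer theorem, $|H(a)|\in\{1,2,3,6\}$, and $|H(a)|<6$ exactly when $a$ is fixed by some non-identity element of $G$. So I will examine each of the five non-identity maps and list its fixed points in $\mathbb F_p\setminus\{0,1\}$.

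\begin{itemize}
\item The involutions. We have $a=a^{-1}$ iff $a=-1$; $a=1-a$ iff $a=2^{-1}$; and $a=a/(a-1)$ iff $a-1=1$, i.e.\ $a=2$ (using $a\neq0$). All three fixed points lie outside $X_p$.
\item The $3$-cycles. The equation $a=(1-a)^{-1}$ gives $a-a^2=1$, and $a=(a-1)/a$ gives $a^2=a-1$. Both reduce to $a^2-a+1=0$.
\end{itemize}

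Hence, for $a\in X_p$, we get $|H(a)|<6$ iff $a^2-a+1=0$. Note that $p$ is odd, so dividing by $2$ is legitimate.

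In that case, the stabilizer of $a$ in $G$ contains the cyclic subgroup of order $3$. It cannot contain an involution, since that would force $a\in\{-1,2,2^{-1}\}$. So the stabilizer has order exactly $3$ and $|H(a)|=2$. To identify the orbit explicitly, I will use the relation $a^2-a+1=0$. It gives $a(1-a)=1$, so $1-a=a^{-1}$. It also gives $(a-1)/a=a^{-1}\cdot a^2/a=a$; more directly, $a-1=a^2$, so $(a-1)/a=a$. Similarly, $(1-a)^{-1}=a$ and $a/(a-1)=a/a^2=a^{-1}$. Therefore $H(a)=\{a,a^{-1}\}$. These two elements are distinct, because $a=a^{-1}$ would force $a=\pm1$, which fails since $1-1+1\neq0$ and $1+1+1=3\neq0$ when $p\ne3$.

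The case $p=3$ needs a separate check. Here $X_3$ is empty: it is $\mathbb F_3$ with $0,1,2$ removed. So the statement holds vacuously for $p=3$, and from now on I assume $p\geq5$ where needed.

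For the existence claim, a root $a$ of $a^2-a+1$ satisfies $(2a-1)^2=-3$. So a root exists in $\mathbb F_p$ iff $-3$ is a square mod $p$, i.e.\ iff $p\equiv1\pmod 3$ for $p>3$. This can be seen in either of two ways.
\begin{itemize}
\item By quadratic reciprocity: $\left(\frac{-3}{p}\right)=\left(\frac p3\right)$.
\item More elementarily: the roots of $a^2-a+1$ are exactly the primitive sixth roots of unity, since $a^3=-1$ and $a\ne-1$. These exist in $\mathbb F_p^\times$ iff $6\mid p-1$, i.e.\ iff $p\equiv1\pmod3$.
\end{itemize}
I prefer the second argument because it avoids reciprocity.

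Finally, such a root automatically lies in $X_p$. It is not $0$ or $1$, since $0^2-0+1=1\neq0$ and $1^2-1+1=1\neq0$. It is not $-1$, since $1+1+1=3\neq0$. It is not $2$, since $4-2+1=3\neq0$. It is not $2^{-1}$, since multiplying $a^2-a+1$ at $a=2^{-1}$ by $4$ gives $1-2+4=3\neq0$.

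The only delicate point in the whole argument is keeping track of $p=3$, where $3=0$ in $\mathbb F_p$; the rest is routine.
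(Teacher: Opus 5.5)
Your proof is correct and follows the paper's argument essentially step for step: you compute the fixed points of the three transpositions and the two $3$-cycles, collapse $H(a)$ to $\{a,a^{-1}\}$ via $1-a=a^{-1}$, and obtain existence from elements of order $6$ in the cyclic group $\mathbb F_p^\times$ (your explicit remark that $X_3=\varnothing$ is a small addition the paper leaves implicit). One cosmetic slip: the chain $(a-1)/a=a^{-1}\cdot a^2/a=a$ evaluates to $1$ as written, but your ``more directly'' line $(a-1)/a=a^2/a=a$ is the right computation.
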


\begin{proof}
The six transformations defining $H(a)$ arise from the six permutations of the points of $\{0,1,a\}$. Two normalized values coincide if and only if a nonidentity permutation leaves the parameter $a$ unchanged. The three transpositions induce the transformations $a\mapsto a^{-1}$, $a\mapsto1-a$, and $a\mapsto a/(a-1)$, whose fixed points in $\mathbb F_p\setminus\{0,1\}$ are $-1$, $2^{-1}$, and $2$; all three values are excluded from $X_p$. The two $3$-cycles induce $a\mapsto(1-a)^{-1}$ and $a\mapsto(a-1)a^{-1}$, each of which fixes $a$ exactly when $a^2-a+1=0$. Hence, for $a\in X_p$, we have $|H(a)|<6$ if and only if $a^2-a+1=0$.

If $a^2-a+1=0$, then $a^{-1}=1-a$, and the six values in $H(a)$ reduce to $H(a)=\{a,a^{-1}\}$. Also $(a+1)(a^2-a+1)=a^3+1$, so $a^3=-1$. Since $a\in X_p$, we have $a\neq-1$, so $a$ has multiplicative order $6$ in $\mathbb F_p^\times$. Conversely, an element of multiplicative order $6$ satisfies $a^3=-1$ and $a\neq-1$, so it is a root of $a^2-a+1=0$ and lies in $X_p$. Since $\mathbb F_p^\times$ is cyclic, such an element exists if and only if $6\mid p-1$, or equivalently $p\equiv1\pmod3$.
\end{proof}

\subsection{Affine orbits of trapezoids}

Let $\{x,y,z,y+z-x\}$ be a trapezoid in $\mathbb F_p$. The affine map $t\mapsto(t-x)/(y-x)$ sends it to
$$
P(a):=\{0,1,a,1+a\},
$$
where $a=(z-x)/(y-x)\in\mathbb F_p\setminus\{0,1,-1\}$. For such a parameter $a$, let $\mathcal P(a):=\operatorname{Orb}_{\operatorname{Aff}(\mathbb F_p)}(P(a))$ denote the affine orbit of $P(a)$. Equivalently, $\mathcal P(a)$ is the union of the translation orbits $\mathcal P_{\pm}(mP(a))$ over all $m\in\mathbb F_p^\times$.

\begin{lemma}\label{lem:affine-trapezoid-orbit}
Let $p$ be an odd prime, and let $a\in\mathbb F_p\setminus\{0,1,-1\}$. As $(u,v)$ ranges over all ordered pairs of distinct points of $P(a)=\{0,1,a,1+a\}$, the image of $P(a)$ under the affine map sending $u$ to $0$ and $v$ to $1$ runs over the six $4$-subsets
$$
\begin{array}{lll}
\{0,1,a,1+a\},
&\{0,1,a^{-1},1+a^{-1}\},
&\{0,1,(1+a)^{-1},a(1+a)^{-1}\},\\
\{0,1,-a,1-a\},
&\{0,1,-a^{-1},(a-1)a^{-1}\},
&\{0,1,-(a-1)^{-1},a(a-1)^{-1}\},
\end{array}
$$
which are not necessarily distinct.
\end{lemma}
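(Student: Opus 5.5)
The plan is a direct computation, organized around the symmetries of the trapezoid $P(a)=\{0,1,a,1+a\}$. The first observation is that the affine image of $P(a)$ under $t\mapsto(t-u)/(v-u)$ depends only on the ordered pair $(u,v)$. Moreover, this image is always a trapezoid containing $0$ and $1$, hence of the form $\{0,1,b,1+b\}$ or a set containing $0,1$ in some other position. So we will not treat the twelve ordered pairs independently. Instead we use the reflection $\rho\colon x\mapsto 1+a-x$, which preserves $P(a)$ and pairs $0\leftrightarrow 1+a$ and $1\leftrightarrow a$.

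Composing the normalizing map of $(u,v)$ with $\rho$ gives the normalizing map of $(\rho u,\rho v)$, and the two images coincide. The twelve ordered pairs therefore fall into six $\rho$-classes, and it suffices to compute one representative from each:
$$
(0,1),\ (0,a),\ (0,1+a),\ (1,0),\ (1,a),\ (1,1+a),
$$
together with their $\rho$-partners $(1+a,a)$, $(1+a,1)$, $(1+a,0)$, $(a,1+a)$, $(a,1)$, $(a,0)$. I would first check that these twelve are exactly all ordered pairs of distinct points, which is immediate by listing.

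Next I would apply $t\mapsto(t-u)/(v-u)$ to the four points for each of the six representatives.
\begin{itemize}
\item For $(0,1)$ we get $P(a)$ itself.
\item For $(0,a)$ the map is $t/a$, giving $\{0,a^{-1},1,1+a^{-1}\}$.
\item For $(0,1+a)$ we get $\{0,(1+a)^{-1},a(1+a)^{-1},1\}$.
\item For $(1,0)$ the map is $1-t$, giving $\{1,0,1-a,-a\}$.
\item For $(1,a)$ the map is $(t-1)/(a-1)$, giving $\{-(a-1)^{-1},0,1,a(a-1)^{-1}\}$.
\item For $(1,1+a)$ the map is $(t-1)/a$, giving $\{-a^{-1},0,(a-1)a^{-1},1\}$.
\end{itemize}
These match the six listed sets. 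Here $a\ne0,\pm1$ guarantees that all denominators $a$, $1+a$, $a-1$ are nonzero.

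There is no real obstacle. The only point needing care is justifying that $\rho$-related pairs give identical images. This holds because the normalizing map for $(\rho u,\rho v)$ is the composite of the normalizing map for $(u,v)$ with $\rho^{-1}=\rho$, and $\rho(P(a))=P(a)$. The phrase ``not necessarily distinct'' requires no proof.
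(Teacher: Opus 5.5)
Your proposal is correct and matches the paper's proof: the paper also uses the reflection $t\mapsto 1+a-t$ to pair the twelve ordered pairs into six classes, and it computes the images for the same six representatives $(0,1),(0,a),(0,1+a),(1,0),(1,1+a),(1,a)$ with the same affine maps. Your argument that pairs related by your reflection $\rho$ give identical images (the normalizing map for $(\rho u,\rho v)$ is the one for $(u,v)$ composed with $\rho$, and $\rho(P(a))=P(a)$) spells out a step the paper leaves implicit.
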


\begin{proof}
The reflection $t\mapsto1+a-t$ preserves $P(a)$ and pairs the twelve ordered pairs $(u,v)$ of distinct points of $P(a)$ into six orbits. Taking the representatives $(0,1)$, $(0,a)$, $(0,1+a)$, $(1,0)$, $(1,1+a)$, and $(1,a)$, the corresponding affine maps $t$, $t/a$, $t/(1+a)$, $1-t$, $(t-1)/a$, and $(t-1)/(a-1)$ carry $P(a)$ to the six listed $4$-subsets, respectively.
\end{proof}

\begin{lemma}\label{lem:affine-trapezoid-orbits}
Let $p$ be an odd prime, and let $a,b\in\mathbb F_p\setminus\{0,1,-1\}$. Then $\mathcal P(a)=\mathcal P(b)$ if and only if $b\in\{a,a^{-1},-a,-a^{-1}\}$. These four parameters are distinct unless $a^2=-1$, in which case they reduce to $\{a,-a\}$.
\end{lemma}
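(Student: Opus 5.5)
The plan is to mirror the proof of Lemma~\ref{lem:affine-triple-orbits}, with Lemma~\ref{lem:affine-trapezoid-orbit} playing the role of Lemma~\ref{lem:anharmonic-parameters}.

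\emph{Reduction to normalized images.} Suppose $\mathcal P(a)=\mathcal P(b)$. Then some affine map $\phi$ sends $P(a)$ to $P(b)$. Put $u=\phi^{-1}(0)$ and $v=\phi^{-1}(1)$; these are distinct points of $P(a)$. Then $\phi$ is exactly the affine map sending $u$ to $0$ and $v$ to $1$, so $P(b)=\{0,1,b,1+b\}$ is one of the six $4$-subsets in Lemma~\ref{lem:affine-trapezoid-orbit}. Each of those sets has the form $\{0,1,c,1+c\}$, with
$$
c\in\Bigl\{a,\ a^{-1},\ (1+a)^{-1},\ -a,\ -a^{-1},\ -(a-1)^{-1}\Bigr\}.
$$
The third set needs a short check: $(1+a)^{-1}+1=(2+a)(1+a)^{-1}$ is not $a(1+a)^{-1}$. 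Rewriting it with $c=a(1+a)^{-1}$ gives $1+c=(1+2a)(1+a)^{-1}$, which is also wrong. So that set is really $\{0,1,c,c'\}$ with $c+c'=1$, which is a trapezoid of the other shape.

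\emph{Recovering the parameter.} Next I determine which $b$ satisfy $\{0,1,b,1+b\}=S$ for a given 4-set $S$ containing $0$ and $1$. The set $S$ has two possible pairings into opposite sides: $\{0,1\}$ may be a pair with sum equal to the sum of the other pair, or $\{0,1\}$ may be a "diagonal". If $S=\{0,1,b,1+b\}$, then $0+(1+b)=1+b$, so $\{0,1+b\}$ and $\{1,b\}$ are the diagonals pairs with equal sums. In general a trapezoid $\{x,x+s,x+t,x+s+t\}$ satisfies $x+(x+s+t)=(x+s)+(x+t)$. So the sets $\{0,1,b,1+b\}$ are exactly the 4-sets containing $0,1$ in which $0$ and $1$ lie in different complementary pairs with equal sums. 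If $S=\{0,1,c,d\}$ has its equal-sum pairing as $\{0,d\},\{1,c\}$ with $d=1+c$, then $b=c$, since $b$ is the element paired with $1$. Alternatively $b$ could be $d-1$ in a relabeling. To handle this cleanly I will list, for each of the six sets, the equal-sum partition and read off $b$ as the partner of $1$ in the partition $\{\{0,\,1+b\},\{1,b\}\}$.

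\emph{Executing the check.} Because a $4$-set in $\mathbb F_p$ ($p$ odd) can have at most one equal-sum pairing unless degenerate, this gives a single $b$ per set, or a small controlled ambiguity. I expect the six sets to yield $b\in\{a,a^{-1},-a,-a^{-1}\}$, with the third and sixth sets producing values that coincide with some of these (e.g.\ $-(a-1)^{-1}$ versus the partner of $1$ in that set). This bookkeeping is the main obstacle. If the reading is wrong, I will instead compute directly: for $b$ given, ask which ordered pairs $(u,v)$ yield $\{0,1,b,1+b\}$, using that $\phi$ must map the equal-sum pairing $\{\{0,1+a\},\{1,a\}\}$ of $P(a)$ onto that of $P(b)$. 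This forces $\{u,v\}$ either to lie across the two pairs, giving maps $t$, $t/a$, $1-t$, $(t-1)/a$ and the parameters $a,a^{-1},-a,-a^{-1}$ (after re-normalizing), or to form a pair, which is impossible because then $0,1$ would be paired.

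\emph{Converse and distinctness.} Conversely, the four maps $t$, $t/a$, $1-t$, $(t-1)/a$ send $P(a)$ to $P(a)$, $P(a^{-1})$, $P(-a)$, $P(-a^{-1})$, so these parameters define the same orbit. For distinctness, $a\ne-a$ since $p$ is odd and $a\ne0$, and $a\ne\pm a^{-1}$ unless $a^2=\pm1$. Here $a^2=1$ is excluded, while $a^2=-1$ gives $a^{-1}=-a$, so the set collapses to $\{a,-a\}$.
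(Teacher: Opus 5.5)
Your fallback argument is correct and is essentially the paper's proof. Both reduce to the six normalized images of Lemma~\ref{lem:affine-trapezoid-orbit}, discard the third and sixth, and read off $b$ from the remaining four. The only difference is the exclusion criterion. The paper notes that the two points other than $0,1$ differ by $(a-1)/(a+1)$ or $(a+1)/(a-1)$, never by $\pm1$. You use the fact that affine maps preserve the unique equal-sum pairing. These are equivalent, since in a trapezoid $\{0,1,x,y\}$ one has $x-y=\pm1$ exactly when $0$ and $1$ lie in different pairs.

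Drop your primary bookkeeping route, though. The claim that all six sets have the form $\{0,1,c,1+c\}$ fails for the sixth set as well as the third: there $-(a-1)^{-1}+a(a-1)^{-1}=1$, so $\{0,1\}$ is again an equal-sum pair. Consequently these two sets yield no admissible $b$ at all, rather than values coinciding with the four. Also state explicitly that $P(b)=P(c)$ forces $b=c$, since otherwise $b=1+c$ and $c=1+b$ would give $2=0$.
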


\begin{proof}
By Lemma~\ref{lem:affine-trapezoid-orbit}, the four sets $P(a)$, $P(a^{-1})$, $P(-a)$, and $P(-a^{-1})$ are the first, second, fourth, and fifth sets in the list, and hence all belong to $\mathcal P(a)$. Each of the four displayed values of $b$ therefore gives $\mathcal P(b)=\mathcal P(a)$.

Conversely, suppose that $\mathcal P(a)=\mathcal P(b)$. The set $P(b)=\{0,1,b,1+b\}$ then belongs to $\mathcal P(a)$ and contains both $0$ and $1$, so it must be one of the six $4$-subsets listed in Lemma~\ref{lem:affine-trapezoid-orbit}. The third and sixth sets cannot be of the form $P(b)$: in $P(b)$, the two points other than $0$ and $1$ differ by $\pm1$, whereas for the third set their difference is $(a-1)/(a+1)$ and for the sixth it is $(a+1)/(a-1)$, neither of which can equal $\pm1$ for $a\neq0$ in odd characteristic. Thus $b\in\{a,a^{-1},-a,-a^{-1}\}$.

Finally, since $a\notin\{0,1,-1\}$, a repetition among these four values can occur only when $a^{-1}=-a$, that is, when $a^2=-1$, in which case the set reduces to $\{a,-a\}$.
\end{proof}

Let $a\in\mathbb F_p\setminus\{0,1,-1\}$. The reflection $t\mapsto1+a-t$ pairs the four triples contained in $P(a)$. The triples $\{0,1,a\}$ and $\{1,a,1+a\}$ lie in the affine orbit $\mathcal T(a)$, while the other two triples $\{0,1,1+a\}$ and $\{0,a,1+a\}$ lie in $\mathcal T(-a)$, since $1-\{0,1,1+a\}=\{0,1,-a\}$. Consequently, if $\mathcal T(a)\neq\mathcal T(-a)$, every block in $\mathcal P(a)$ contains two triples from each of these two affine orbits; if $\mathcal T(a)=\mathcal T(-a)$, all four triples lie in the same affine orbit.

\begin{lemma}[{\cite[Lemma~2.2(ii)]{MunemasaSawa}, normalized form}]\label{lem:trapezoid-admissible}
Let $p$ be an odd prime, and let $a\in\mathbb F_p\setminus\{0,1,-1\}$. All four triples contained in $P(a)$ are irregular if and only if $a\notin\{0,\pm1,\pm2,\pm2^{-1}\}$.
\end{lemma}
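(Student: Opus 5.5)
The argument reduces to the discussion just before the statement. The reflection $t\mapsto 1+a-t$ preserves $P(a)$ and maps irregular triples to irregular triples, since affine maps preserve the property of one point being the average of the other two. So it suffices to examine one representative from each reflection pair: $\{0,1,a\}$ and $\{0,1,1+a\}$. The first triple is $\{0,1,a\}$ itself. For the second, the affine map $t\mapsto 1-t$ sends $\{0,1,1+a\}$ to $\{1,0,-a\}=\{0,1,-a\}$, and affine maps preserve irregularity. Hence all four triples in $P(a)$ are irregular if and only if both $\{0,1,a\}$ and $\{0,1,-a\}$ are irregular.

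Next I use the characterization established before $X_p$ was defined. For $b\in\mathbb F_p\setminus\{0,1\}$, the triple $\{0,1,b\}$ is isosceles exactly when one point is the average of the other two, that is, when $b\in\{-1,2,2^{-1}\}$. Since $a\notin\{0,1,-1\}$, we have $a,-a\in\mathbb F_p\setminus\{0,1\}$, so both parameters are legitimate. The condition therefore becomes $a\notin\{-1,2,2^{-1}\}$ and $-a\notin\{-1,2,2^{-1}\}$. Equivalently, $a\notin\{\pm1,\pm2,\pm2^{-1}\}$, and together with the standing hypothesis $a\ne0$ this gives exactly $a\notin\{0,\pm1,\pm2,\pm2^{-1}\}$.

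There is no real obstacle here. The only care needed is to note that "isosceles or irregular" is a genuine dichotomy for triples of distinct points in $\mathbb F_p$, since no right-angled triangles arise for odd $p$, and that the hypothesis $a\ne\pm1$ ensures the four points of $P(a)$ are distinct, so the four triples really are $3$-subsets.
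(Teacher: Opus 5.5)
Your proof is correct and is essentially the paper's argument. The paper likewise uses the reflection $t\mapsto 1+a-t$ and the map $t\mapsto 1-t$ to place the four triples of $P(a)$ in the affine orbits $\mathcal T(a)$ and $\mathcal T(-a)$, and then uses that the isosceles triangles form the single affine orbit with parameters $\{-1,2,2^{-1}\}$, so admissibility amounts to $a,-a\in X_p$; you have simply written out the affine-invariance and midpoint checks explicitly.
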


\begin{proof}
The triples in $P(a)$ lie in $\mathcal T(a)$ or $\mathcal T(-a)$, with two triples represented by each parameter. Since the isosceles triangles form the affine orbit represented by $H(-1)=\{-1,2,2^{-1}\}$, all four triples in $P(a)$ are irregular if and only if $a,-a\in X_p$, which gives the stated set of excluded parameters.
\end{proof}

\begin{lemma}\label{lem:trapezoid-loop}
Let $p$ be an odd prime, and let $a\in\mathbb F_p\setminus\{0,1,-1\}$. Every triangle of $P(a)$ belongs to $\mathcal T(a)$ or $\mathcal T(-a)$, and $\mathcal T(a)=\mathcal T(-a)$ if and only if $a^2=-1$, $a^2-a-1=0$, or $a^2+a-1=0$.
\end{lemma}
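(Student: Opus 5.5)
The first assertion was already checked in the discussion before Lemma~\ref{lem:trapezoid-admissible}: the triples $\{0,1,a\}$ and $\{1,a,1+a\}=1+a-\{0,1,a\}$ lie in $\mathcal T(a)$, while $\{0,1,1+a\}$ and $\{0,a,1+a\}$ lie in $\mathcal T(-a)$ because $1-\{0,1,1+a\}=\{0,1,-a\}$. I will simply recall this. Note that $-a\in\mathbb F_p\setminus\{0,1\}$ since $a\neq0,-1$, so $\mathcal T(-a)$ is defined.

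For the equivalence, Lemma~\ref{lem:affine-triple-orbits} shows that $\mathcal T(a)=\mathcal T(-a)$ if and only if $-a\in H(a)$. So the task is to solve the six equations $-a=h$ with $h$ running over the six elements of $H(a)$ in Lemma~\ref{lem:anharmonic-parameters}, keeping the constraint $a\notin\{0,1,-1\}$.

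\begin{itemize}
\item $-a=a$ gives $a=0$, which is excluded.
\item $-a=a^{-1}$ gives $a^2=-1$.
\item $-a=1-a$ gives $0=1$, which is impossible.
\item $-a=(1-a)^{-1}$ gives $-a+a^2=1$, that is, $a^2-a-1=0$.
\item $-a=(a-1)/a$ gives $-a^2=a-1$, that is, $a^2+a-1=0$.
\item $-a=a/(a-1)$ gives $-(a-1)=1$ after dividing by $a\neq0$, so $a=0$, which is excluded.
\end{itemize}

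Hence $-a\in H(a)$ exactly when one of the three stated equations holds. For the converse, each of these equations makes $-a$ equal to the corresponding element of $H(a)$. We must also check that such an $a$ is an admissible parameter, i.e.\ $a\notin\{0,\pm1\}$. This holds because $a=0$ or $a=\pm1$ satisfies none of the three equations in odd characteristic:
\begin{itemize}
\item at $a=0$ the three quadratics take the values $1$, $-1$, $-1$;
\item at $a=1$ they take the values $2$, $-1$, $1$;
\item at $a=-1$ they take the values $2$, $1$, $-1$.
\end{itemize}
None of these values is zero when $p$ is odd.

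There is no real obstacle here. The only care needed is to track which equations reduce to $a=0$ and are therefore vacuous, and to note that the equivalence holds whether or not $H(a)$ has six distinct elements, since Lemma~\ref{lem:affine-triple-orbits} requires only membership in $H(a)$.
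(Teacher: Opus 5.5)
Your proposal is correct and follows essentially the same route as the paper: you recall the first assertion from the discussion preceding Lemma~\ref{lem:trapezoid-admissible}, reduce $\mathcal T(a)=\mathcal T(-a)$ to $-a\in H(a)$ via Lemma~\ref{lem:affine-triple-orbits}, and compare $-a$ with the six elements of $H(a)$, three of which are ruled out by $a\neq0$. Your extra check that the solutions avoid $\{0,\pm1\}$ is harmless but unnecessary, since $a$ is already assumed to lie in $\mathbb F_p\setminus\{0,1,-1\}$.
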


\begin{proof}
By Lemma~\ref{lem:affine-triple-orbits}, the equality $\mathcal T(a)=\mathcal T(-a)$ is equivalent to $-a\in H(a)$. Comparing $-a$ with the six elements of $H(a)$ gives, since $a\neq0$, the three possibilities $-a=a^{-1}$, $-a=(1-a)^{-1}$, and $-a=(a-1)/a$, which are equivalent to $a^2=-1$, $a^2-a-1=0$, and $a^2+a-1=0$, respectively.
\end{proof}

By Lemma~\ref{lem:multiplier-quotient}, the multiplier quotient of $\mathcal K_p$ has the affine orbits $\mathcal T(a)$ of irregular triangles as its vertices and the affine orbits $\mathcal P(a)$ of admissible trapezoids as its edges. The edge represented by $\mathcal P(a)$ has endpoint orbits $\mathcal T(a)$ and $\mathcal T(-a)$, and becomes a loop precisely when $\mathcal T(a)=\mathcal T(-a)$.

For primes $p\equiv5\pmod{12}$, the graph $G_p$ in Definition~\ref{def:orbit-graph} is obtained from this multiplier quotient by deleting its loops and retaining parallel edges. Proposition~\ref{prop:p5-matching-reduction} shows that a $1$-factor of $G_p$ selects entire affine trapezoid orbits whose blocks form an auxiliary design $A^*(p)$.

For primes $p\equiv1\pmod{12}$, the two roots of $a^2-a+1=0$ represent one exceptional affine triangle orbit $\mathcal T(\zeta)$, where $\zeta^2-\zeta+1=0$. The vertices of $\mathcal K_p$ belonging to $\mathcal T(\zeta)$ have stabilizers of order $3$ in $M_p$, so selecting all blocks in an affine trapezoid orbit incident with $\mathcal T(\zeta)$ would cover each of its triples three times. The graph $G_p^*$ in Definition~\ref{def:p1-orbit-graph} is obtained from the multiplier quotient by deleting all loops together with the exceptional vertex $\mathcal T(\zeta)$ and its incident edges. Section~\ref{sec:p1} uses $1$-factors in specified vertex-deleted subgraphs of $G_p^*$, combined with alternating paths, the exceptional orbit $\mathcal T(\zeta)$, and the omitted loop orbits, to construct a $1$-factor of $\mathcal K_p$.

\section{The case \texorpdfstring{$p\equiv5\pmod{12}$}{p = 5 (mod 12)}}\label{sec:p5}

Throughout this section, let $p$ be a prime with $p\equiv5\pmod{12}$, put $X_p:=\mathbb F_p\setminus\{0,1,-1,2,2^{-1}\}$, and define the permutations $\sigma(a)=a^{-1}$ and $\theta(a)=1-a$ on $X_p$.

\begin{lemma}\label{lem:p5-free}
For every prime $p\equiv5\pmod{12}$ and every $a\in X_p$, the six maps generated by $\sigma$ and $\theta$ send $a$ to six distinct elements of $X_p$, so $|H(a)|=6$.
\end{lemma}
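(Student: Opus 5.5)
The plan is to reduce everything to Lemma~\ref{lem:size-of-Ha}, after two preliminary checks. First, $\sigma$ and $\theta$ must really be permutations of $X_p$. Both maps are involutions on $\mathbb F_p\setminus\{0,1\}$, since $\sigma$ swaps $1$ with itself and $0$ with nothing; more carefully, $\sigma$ and $\theta$ preserve $\mathbb F_p\setminus\{0,1\}$. They also permute the excluded set $\{-1,2,2^{-1}\}$: we have $\sigma(-1)=-1$, $\sigma(2)=2^{-1}$, $\theta(-1)=2$, and $\theta(2^{-1})=2^{-1}$. Being bijections of $\mathbb F_p\setminus\{0,1\}$ that preserve this subset, they restrict to bijections of $X_p$.

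Second, I identify the group generated by $\sigma$ and $\theta$. Composing them gives $\theta\sigma(a)=1-a^{-1}=(a-1)/a$ and $\sigma\theta(a)=(1-a)^{-1}$. Further compositions give $a/(a-1)$, and these maps together with the identity exhaust the six maps defining $H(a)$ in Lemma~\ref{lem:anharmonic-parameters}. They are closed under composition, so they form a group isomorphic to $S_3$. Hence the images of $a$ under the six maps are exactly the elements of $H(a)$, and all of them lie in $X_p$ by the first step.

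It then remains to show $|H(a)|=6$. By Lemma~\ref{lem:size-of-Ha}, $|H(a)|<6$ for $a\in X_p$ only when $a^2-a+1=0$, and such an $a$ exists only when $p\equiv1\pmod 3$. Since $p\equiv5\pmod{12}$ gives $p\equiv2\pmod3$, no such $a$ exists. Therefore $|H(a)|=6$, and the six images of $a$ are distinct.

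No step is a real obstacle. The only point needing care is confirming that the set of six maps is closed under composition, so that "the maps generated by $\sigma$ and $\theta$" coincide with the six maps of Lemma~\ref{lem:anharmonic-parameters}; this is a routine composition check.
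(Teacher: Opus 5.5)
Your proof is correct and follows the paper's route: the paper likewise observes that $p\equiv2\pmod3$ and concludes directly from Lemma~\ref{lem:size-of-Ha}. Your preliminary checks are details the paper leaves implicit, namely that $\sigma,\theta$ preserve $X_p$ and that they generate exactly the six maps defining $H(a)$.
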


\begin{proof}
Since $p\equiv2\pmod3$, this follows immediately from Lemma~\ref{lem:size-of-Ha}.
\end{proof}

By Lemmas~\ref{lem:affine-triple-orbits} and~\ref{lem:p5-free}, each affine orbit of irregular triangles is represented by six parameters in $X_p$. Moreover, every irregular triangle in $\mathbb F_p$ has trivial setwise stabilizer in $\operatorname{Aff}(\mathbb F_p)$: if a nonidentity affine map preserved such a triple, it would induce a nontrivial permutation of its three points, and normalizing two of the points to $0$ and $1$ would produce two different orderings with the same value in $H(a)$, contradicting Lemma~\ref{lem:p5-free}. Since $|\operatorname{Aff}(\mathbb F_p)|=p(p-1)$ and there are $p(p-1)(p-5)/6$ irregular triangles in $\mathbb F_p$, there are $(p-5)/6$ affine orbits of irregular triangles, each containing $p(p-1)$ triangles.

\subsection{Affine orbit multigraph}

\begin{definition}\label{def:orbit-graph}
Let $p$ be a prime with $p\equiv5\pmod{12}$. The \emph{affine orbit multigraph} $G_p$ is the loopless multigraph with vertex set
$$
V(G_p)=\{\mathcal T(a):a\in X_p\},
\qquad
|V(G_p)|=\frac{p-5}{6}.
$$
Its edges are the distinct affine trapezoid orbits $\mathcal P(a)$ ($a\in X_p$) for which all four triples in $P(a)$ are irregular and $\mathcal T(a)\neq\mathcal T(-a)$. The edge $\mathcal P(a)$ joins $\mathcal T(a)$ and $\mathcal T(-a)$. Distinct affine trapezoid orbits are retained as parallel edges when they share the same endpoints, and vertex degrees count parallel edges with multiplicity.
\end{definition}

The underlying simple graph of $G_p$ is K\"ohler's affine graph~\cite[Section~3, Theorem~3]{GrannellGriggsSurvey}; retaining parallel edges simplifies the degree and bridge arguments below.

\begin{example}
Let $p=17$. We have
$$
H(3)=\{3,6,8,10,12,15\},
\qquad
H(4)=\{4,5,7,11,13,14\}.
$$
By Lemma~\ref{lem:affine-trapezoid-orbits}, the affine trapezoid orbits $\mathcal P(3)$ and $\mathcal P(5)$ are distinct since $5\notin\{3,3^{-1},-3,-3^{-1}\}=\{3,6,14,11\}$. On the other hand, $\mathcal P(3)$ joins $\mathcal T(3)$ and $\mathcal T(-3)=\mathcal T(14)=\mathcal T(4)$, while $\mathcal P(5)$ joins $\mathcal T(5)=\mathcal T(4)$ and $\mathcal T(-5)=\mathcal T(12)=\mathcal T(3)$. Thus $\mathcal P(3)$ and $\mathcal P(5)$ are parallel edges between $\mathcal T(3)$ and $\mathcal T(4)$.
\end{example}

\begin{lemma}\label{lem:p5-trapezoid-count}
Let $p>5$ be a prime with $p\equiv5\pmod{12}$, and let $\mathcal P(a)$ be an edge of $G_p$. Then $|\mathcal P(a)|=p(p-1)/2$, and every triple in $\mathcal T(a)\cup\mathcal T(-a)$ occurs in exactly one block of $\mathcal P(a)$.
\end{lemma}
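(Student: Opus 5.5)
The plan is an orbit–stabilizer computation for $\mathcal P(a)$, followed by a double count of incidences between blocks of $\mathcal P(a)$ and triples of $\mathcal T(a)\cup\mathcal T(-a)$.

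\textbf{Size of $\mathcal P(a)$.} We have $|\operatorname{Aff}(\mathbb F_p)|=p(p-1)$, so it suffices to show that the setwise stabilizer of $P(a)=\{0,1,a,1+a\}$ in $\operatorname{Aff}(\mathbb F_p)$ has order exactly $2$.

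The reflection $t\mapsto 1+a-t$ lies in this stabilizer, so the order is at least $2$.

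For the upper bound, let $g$ be a stabilizing map. Then $g$ permutes the four triples of $P(a)$. Each of these triples is irregular, since $\mathcal P(a)$ is an edge and Lemma~\ref{lem:trapezoid-admissible} applies. By the discussion after Lemma~\ref{lem:p5-free}, an irregular triangle has trivial affine stabilizer. So if $g$ maps some triple of $P(a)$ to itself, then $g$ fixes that triple pointwise. A nonidentity affine map has at most one fixed point, so in that case $g$ is the identity.

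Now compose $g$ with the reflection if necessary. The reflection swaps $\{0,1,a\}$ with $\{1,a,1+a\}$ inside $\mathcal T(a)$, and swaps the two triples inside $\mathcal T(-a)$. Also, $g$ preserves affine orbits, and $\mathcal T(a)\ne\mathcal T(-a)$ because $\mathcal P(a)$ is an edge. Hence $g$ maps the two $\mathcal T(a)$-triples among themselves. After composing with the reflection if needed, we may assume $g$ fixes $\{0,1,a\}$, so $g$ is the identity by the previous paragraph.

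Therefore the stabilizer has order exactly $2$, and $|\mathcal P(a)|=p(p-1)/2$.

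\textbf{Coverage.} Every block of $\mathcal P(a)$ is an affine image of $P(a)$, so it contains exactly two triples from $\mathcal T(a)$ and two from $\mathcal T(-a)$. Counting pairs (block, $\mathcal T(a)$-triple in it) gives $2\cdot p(p-1)/2=p(p-1)=|\mathcal T(a)|$ incidences, using that $\mathcal T(a)$ has $p(p-1)$ triangles as computed after Lemma~\ref{lem:p5-free}.

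Every triple $T\in\mathcal T(a)$ lies in at least one block. Indeed, $T=g\{0,1,a\}$ for some affine $g$, and then $T\subseteq gP(a)\in\mathcal P(a)$.

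A triple is contained in at most one $4$-subset having each given fourth point, and the incidence count equals the number of triples. Combining this with the existence statement, the count $p(p-1)$ of incidences distributed over $p(p-1)$ triples, each covered at least once, forces each triple to be covered exactly once. The same argument applies verbatim to $\mathcal T(-a)$.

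The main obstacle is the stabilizer bound. It relies on $\mathcal T(a)\neq\mathcal T(-a)$, which holds by the definition of an edge of $G_p$, together with the trivial-stabilizer property for $p\equiv5\pmod{12}$ from Lemma~\ref{lem:p5-free}.
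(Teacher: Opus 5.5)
Your proposal is correct and follows essentially the same route as the paper: because $\mathcal T(a)\neq\mathcal T(-a)$, any stabilizing map, after composing with the reflection $t\mapsto1+a-t$ if necessary, fixes a $\mathcal T(a)$-triple of $P(a)$ and so is the identity, since irregular triangles have trivial affine stabilizer. The count $2\cdot p(p-1)/2=|\mathcal T(a)|$, combined with transitivity of $\operatorname{Aff}(\mathbb F_p)$ on $\mathcal T(a)$, then gives exact coverage; your detour through pointwise fixing and the remark about fourth points are unnecessary but harmless.
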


\begin{proof}
Since $\mathcal T(a)\neq\mathcal T(-a)$, every affine map preserving $P(a)$ setwise preserves the pair of triples of $P(a)$ in $\mathcal T(a)$ and the pair in $\mathcal T(-a)$. The reflection $\tau_a(t)=1+a-t$ exchanges the two triples in each pair. For any $\psi\in\operatorname{Stab}_{\operatorname{Aff}(\mathbb F_p)}(P(a))$, either $\psi$ or $\tau_a\psi$ fixes a chosen triple of $P(a)$ in $\mathcal T(a)$ setwise, and is therefore the identity because an irregular triangle has trivial affine stabilizer. Hence $\operatorname{Stab}_{\operatorname{Aff}(\mathbb F_p)}(P(a))=\{1,\tau_a\}$ and $|\mathcal P(a)|=p(p-1)/2$.

Each of the $p(p-1)/2$ blocks in $\mathcal P(a)$ contains two triples from $\mathcal T(a)$. Since $|\mathcal T(a)|=p(p-1)$ and $\operatorname{Aff}(\mathbb F_p)$ acts transitively on $\mathcal T(a)$, every triple in $\mathcal T(a)$ occurs in exactly one block of $\mathcal P(a)$. The same holds for $\mathcal T(-a)$.
\end{proof}

\begin{proposition}[K\"ohler's affine reduction; cf.~{\cite[Section~3, Theorem~3]{GrannellGriggsSurvey}}]\label{prop:p5-matching-reduction}
Let $p$ be a prime with $p\equiv5\pmod{12}$. If $G_p$ has a $1$-factor, then there exists an auxiliary design $A^*(p)$.
\end{proposition}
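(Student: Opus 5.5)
Let $F$ be a $1$-factor of $G_p$. I would take as the candidate design the union of the block families in the chosen edges, $\mathcal D:=\bigcup_{\mathcal P(a)\in F}\mathcal P(a)$. The plan is to check that $\mathcal D$ has the defining properties of an $A^*(p)$ by working on affine orbits and then passing to individual triples.

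First, the basic properties. Each block of $\mathcal D$ lies in some $\mathcal P(a)$, so it has the form $mP(a)+c$ and is a trapezoid. By Lemma~\ref{lem:trapezoid-admissible}, every edge of $G_p$ comes from a parameter $a$ for which all four triples of $P(a)$ are irregular. Affine maps preserve isosceles triangles, so they also preserve irregular ones. Hence no block of $\mathcal D$ contains an isosceles triangle. Each $\mathcal P(a)$ is an affine orbit, so it is closed under translations, and therefore $\mathcal D$ is $(\mathbb F_p,+)$-invariant.

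Next, the covering property. Every irregular triangle $T$ lies in exactly one affine orbit $\mathcal T(b)$ with $b\in X_p$. By Lemma~\ref{lem:p5-free} and the count preceding Definition~\ref{def:orbit-graph}, these orbits are exactly the $(p-5)/6$ vertices of $G_p$. Since $F$ is a $1$-factor, there is a unique edge $\mathcal P(a)\in F$ incident with $\mathcal T(b)$; as $G_p$ is loopless, $\mathcal T(b)$ is one of $\mathcal T(a)$, $\mathcal T(-a)$. Lemma~\ref{lem:p5-trapezoid-count} then says $T$ lies in exactly one block of $\mathcal P(a)$.

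The main point to check is that $T$ lies in no block of any other chosen orbit $\mathcal P(a')\in F$. Every triple in a block of $\mathcal P(a')$ lies in $\mathcal T(a')\cup\mathcal T(-a')$, by the remark before Lemma~\ref{lem:trapezoid-admissible} applied to $P(a')$ and transported by affine maps. These are the two endpoints of $\mathcal P(a')$, and neither equals $\mathcal T(b)$ because $F$ is a matching. We also need distinct edges of $F$ to be disjoint families of blocks. This holds because distinct affine orbits are disjoint, and parallel edges are distinct affine orbits by definition. Hence $T$ is covered exactly once, and $\mathcal D$ is an $A^*(p)$.

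Finally, the case $p=5$ is excluded from Lemma~\ref{lem:p5-trapezoid-count}, so I would handle it directly. Here $X_5=\emptyset$, so $G_5$ is empty and trivially has a $1$-factor. There are no irregular triangles and $(p-1)(p-5)/24=0$, so the empty family is an $A^*(5)$.
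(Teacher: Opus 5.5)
Your proof is correct and follows the same route as the paper: take all blocks in the affine trapezoid orbits of the $1$-factor, use Lemma~\ref{lem:p5-trapezoid-count} for exact coverage of each endpoint orbit and the definition of $G_p$ to exclude isosceles triangles, and treat $p=5$ with the empty family. You also spell out details the paper leaves implicit, namely translation invariance and the fact that no second chosen orbit $\mathcal P(a')$ can cover the triangle, since every triple of $\mathcal P(a')$ lies in one of its two endpoint orbits.
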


\begin{proof}
For $p=5$, the empty family is an auxiliary design $A^*(5)$. For $p>5$, taking all blocks in the affine trapezoid orbits belonging to a $1$-factor of $G_p$ covers every irregular triangle of $\mathbb F_p$ exactly once by Lemma~\ref{lem:p5-trapezoid-count}, and no selected block contains an isosceles triangle by the definition of $G_p$. Thus the selected blocks form an auxiliary design $A^*(p)$.
\end{proof}

Since $|V(G_p)|=(p-5)/6$, a $1$-factor of $G_p$ has $(p-5)/12$ edges, yielding $\frac{p-5}{12}\cdot\frac{p(p-1)}{2}=\frac{p(p-1)(p-5)}{24}$ blocks in $A^*(p)$, or $(p-1)(p-5)/24$ translation base blocks, in agreement with Section~\ref{sec:auxiliary}.

\subsection{Tutte reduction}

Assume henceforth that $p>5$. For each $a\in X_p$, the six elements of $H(a)$ split into three inverse pairs:
$$
\{a,a^{-1}\},\qquad
\{1-a,(1-a)^{-1}\},\qquad
\left\{\frac{a-1}{a},\frac{a}{a-1}\right\}.
$$
By Lemma~\ref{lem:affine-trapezoid-orbits}, each inverse pair $\{u,u^{-1}\}\subseteq H(a)$ determines an affine trapezoid orbit $\mathcal P(u)=\mathcal P(u^{-1})$ joining $\mathcal T(a)$ and $\mathcal T(-u)$, unless $P(u)$ contains an isosceles triangle or $\mathcal T(-u)=\mathcal T(a)$.

Since $p\equiv5\pmod{12}$, we may fix a root $\mathrm{i}\in\mathbb F_p$ of $x^2=-1$; the orbit $\mathcal T(\mathrm{i})$ is independent of this choice since $-\mathrm{i}=\mathrm{i}^{-1}\in H(\mathrm{i})$. Whenever $x^2+x-1=0$ has a solution in $\mathbb F_p$, we also fix one of its roots and denote it by $\rho$; the other root is $-1-\rho\in H(\rho)$, so $\mathcal T(\rho)$ is likewise independent of the choice.

\begin{lemma}\label{lem:degrees-Gp}
Let $p>5$ be a prime with $p\equiv5\pmod{12}$. If $x^2+x-1=0$ has no solution in $\mathbb F_p$, then $\deg\mathcal T(-2)=\deg\mathcal T(\mathrm{i})=2$, and every other vertex of $G_p$ has degree $3$. If $x^2+x-1=0$ has a root $\rho\in\mathbb F_p$, then $\mathcal T(-2)$, $\mathcal T(\mathrm{i})$, and $\mathcal T(\rho)$ are pairwise distinct, $\deg\mathcal T(-2)=\deg\mathcal T(\mathrm{i})=2$, $\deg\mathcal T(\rho)=1$, and every other vertex of $G_p$ has degree $3$.
\end{lemma}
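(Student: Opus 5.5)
Fix a vertex $\mathcal T(a)$ of $G_p$ with $a\in X_p$. The plan is to count the edge-ends at this vertex by looking at the three inverse pairs of $H(a)$ listed above. Each pair $\{u,u^{-1}\}$ gives a candidate edge $\mathcal P(u)$ joining $\mathcal T(a)=\mathcal T(u)$ and $\mathcal T(-u)$.

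First we check that this count is exact, with no overcounting and nothing missed. An edge $\mathcal P(b)$ is incident with $\mathcal T(a)$ if and only if $\mathcal T(b)=\mathcal T(a)$ or $\mathcal T(-b)=\mathcal T(a)$. We have $\mathcal P(b)=\mathcal P(-b)$ by Lemma~\ref{lem:affine-trapezoid-orbits}, so we may replace $b$ by $-b$ and assume $b\in H(a)$. Lemma~\ref{lem:affine-trapezoid-orbits} then says that $\mathcal P(b)=\mathcal P(b')$ for $b,b'\in H(a)$ if and only if $b'\in\{b,b^{-1}\}$. The alternatives $b'=-b$ or $-b^{-1}$ would force $\mathcal T(-b)=\mathcal T(a)$, which makes the edge a loop; loops are excluded anyway. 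Also $b^2=-1$ gives a loop. Hence distinct inverse pairs in $H(a)$ give distinct non-loop edges. A non-loop edge has distinct endpoints, so it contributes exactly $1$ to $\deg\mathcal T(a)$. Therefore $\deg\mathcal T(a)=3-(\text{number of bad pairs})$. A pair $\{u,u^{-1}\}$ is bad if either
\begin{enumerate}[label=(\roman*)]
\item $P(u)$ contains an isosceles triangle, or
\item $\mathcal T(-u)=\mathcal T(u)$.
\end{enumerate}

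Next we classify the bad pairs. By Lemma~\ref{lem:trapezoid-admissible}, case (i) with $u\in X_p$ happens exactly when $-u\in\{-1,2,2^{-1}\}$, that is, $u\in\{-2,-2^{-1}\}$. So case (i) occurs only in the orbit $\mathcal T(-2)$, which has the single bad pair $\{-2,-2^{-1}\}$. By Lemma~\ref{lem:trapezoid-loop}, case (ii) happens when $u^2=-1$, or $u^2-u-1=0$, or $u^2+u-1=0$. The second equation becomes the third under $u\mapsto -u$.

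Case $u^2=-1$: the only such $u$ are $\pm\mathrm i$, forming the single inverse pair $\{\mathrm i,-\mathrm i\}\subseteq H(\mathrm i)$. We should confirm that the other two pairs of $H(\mathrm i)$, namely $\{1-\mathrm i,(1-\mathrm i)^{-1}\}$ and $\{(\mathrm i-1)/\mathrm i,\mathrm i/(\mathrm i-1)\}$, contain no root of any of the three equations and are not $\pm2^{\pm1}$. A direct computation shows $(1-\mathrm i)^2=-2\mathrm i$, and similar short checks using $p\ne5$ and $p\equiv2\pmod3$ handle the rest.

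Case $u^2\pm u-1=0$: the roots are $\rho,\ -1-\rho,\ -\rho,\ 1+\rho$. From $\rho^2+\rho=1$ we get $\rho^{-1}=\rho+1$. So $\{\rho,\rho^{-1}\}=\{\rho,1+\rho\}$ and $\{-\rho,-\rho^{-1}\}=\{-\rho,-1-\rho\}$. All four roots lie in the single orbit $\mathcal T(\rho)=\mathcal T(-\rho)$. We must show they occupy two of the three inverse pairs of $H(\rho)$, so that $\deg\mathcal T(\rho)=1$. The check: $1-\rho=-\rho^{-1}\cdot(\ldots)$; concretely, $1-\rho=\rho^2$, and one computes the pair $\{(\rho-1)/\rho,\rho/(\rho-1)\}$ and matches it with $\{-\rho,-\rho^{-1}\}$ using $\rho^2=1-\rho$. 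We also verify that the remaining pair is good: it is not $\pm\mathrm i$, since $\mathrm i$ does not satisfy $x^2\pm x-1=0$, and it is not $-2^{\pm1}$.

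Finally, the three special vertices must be pairwise distinct. This is shown by checking $\mathrm i\notin H(-2)$, $\rho\notin H(-2)$, and $\rho\notin H(\mathrm i)$, using characteristic arguments: for instance, $\rho\in H(-2)$ would force $\rho^2+\rho-1=0$ for some $\rho\in\{-2,-1/2,3,1/3,3/2,2/3\}$, which yields $p\mid 1,5,11,\dots$. Here $p>5$, $p\equiv5\pmod{12}$, and $p\equiv\pm1\pmod5$ whenever $\rho$ exists are used to exclude small primes. We also need these vertices not to be isosceles, which holds since each is in $X_p$.

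The main obstacle is bookkeeping rather than ideas. We must ensure that a special parameter does not lie in two distinct inverse pairs of the same $H(a)$, and that no coincidence occurs for particular primes. I would verify this by writing each of the six elements of $H(-2)$, $H(\mathrm i)$, and $H(\rho)$ explicitly and testing them against the equations, excluding the finitely many small primes by the congruence conditions.
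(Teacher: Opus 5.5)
Your proposal is correct and follows essentially the same route as the paper. You count the three inverse pairs of $H(a)$ at each vertex, locate the isosceles failure at $\mathcal T(-2)$ and the loop failures at $\mathcal T(\mathrm i)$ and $\mathcal T(\rho)$ via Lemmas~\ref{lem:trapezoid-admissible} and~\ref{lem:trapezoid-loop}, and rule out coincidences among these orbits by direct substitution into $H(-2)$ and $H(\mathrm i)$, with the residual primes $5,11,13$ excluded by $p>5$ and $p\equiv5\pmod{12}$. Your explicit check that distinct inverse pairs give distinct non-loop edges, and that every incident edge arises this way, is a detail the paper leaves implicit but which does not change the argument.
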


\begin{proof}
An inverse pair $\{u,u^{-1}\}$ at a vertex $\mathcal T(a)$ fails to give an edge of $G_p$ in two situations: either $\mathcal T(-u)$ is the affine orbit of isosceles triangles, or $\mathcal T(-u)=\mathcal T(u)$.

The first situation occurs when $-u\in H(-1)=\{-1,2,2^{-1}\}$, or equivalently $u\in\{1,-2,-2^{-1}\}$. Since $1\notin X_p$, the only possibility is $\{u,u^{-1}\}=\{-2,-2^{-1}\}$ at $\mathcal T(-2)$, where one of the three inverse pairs fails to give an edge.

By Lemma~\ref{lem:trapezoid-loop}, the second situation occurs when $u^2=-1$, $u^2-u-1=0$, or $u^2+u-1=0$. The two roots of $u^2=-1$ form the single self-negative inverse pair $\{\mathrm{i},-\mathrm{i}\}$ at $\mathcal T(\mathrm{i})$, which gives one omitted loop and leaves $\deg\mathcal T(\mathrm{i})=2$. When $\rho^2+\rho-1=0$, we have $\rho^{-1}=\rho+1$ and
$$
H(\rho)=\{\rho,\rho+1\}\cup\{-\rho,-\rho-1\}\cup\{1-\rho,\rho+2\},
$$
which contains the roots of both $u^2+u-1=0$ and $u^2-u-1=0$. The first two inverse pairs are negatives of one another and determine one omitted loop at $\mathcal T(\rho)$, while $\{1-\rho,\rho+2\}$ gives one edge, so $\deg\mathcal T(\rho)=1$.

It remains to verify that $\mathcal T(-2)$, $\mathcal T(\mathrm{i})$, and $\mathcal T(\rho)$ (when $\rho$ exists) are distinct. Direct substitution into $H(-2)$ shows that $\mathcal T(-2)=\mathcal T(\mathrm{i})$ can occur only in characteristics $5$ or $13$, while $\mathcal T(-2)=\mathcal T(\rho)$ can occur only in characteristics $5$ or $11$. Similarly, substituting the elements of $H(\mathrm{i})$ into $x^2+x-1$ shows that $\mathcal T(\mathrm{i})=\mathcal T(\rho)$ can occur only in characteristics $3$ or $5$. None of these primes satisfies $p>5$ and $p\equiv5\pmod{12}$.
\end{proof}

\begin{lemma}\label{lem:Gp-connected}
For every prime $p>5$ with $p\equiv5\pmod{12}$, the affine orbit multigraph $G_p$ is connected.
\end{lemma}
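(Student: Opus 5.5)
We need to show that $G_p$ is connected. The edge $\mathcal P(u)$ joins $\mathcal T(a)$ and $\mathcal T(-u)$ for $u \in H(a)$. So connectivity amounts to the following: the group generated by $\sigma$, $\theta$, and negation $\nu(a)=-a$, acting on $X_p$ (with some forbidden moves removed), is transitive modulo $H$-classes. Indeed, moving along edges means applying $\nu$ to any element of the class $H(a)$, and $H$-classes are orbits of $\langle\sigma,\theta\rangle$.

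The plan is in three steps.

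\textbf{Step 1: passing to the whole group.} The group $\Gamma=\langle \sigma,\theta,\nu\rangle$ acts on $\mathbb F_p\cup\{\infty\}$ by Möbius maps. The composite $\theta\nu\theta(a)=2-a$, and together with the other generators these produce the maps $a\mapsto a+2$ and $a\mapsto 1/a$ up to sign. So $\Gamma$ contains translations by $2$, and hence all translations, since $2$ generates $\mathbb F_p$ additively.

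\textbf{Step 2: a word walks along the graph.} Concretely, I would show the following. If $a\in X_p$ and $a+1\in X_p$ (or similarly $a+2$), then $\mathcal T(a)$ and $\mathcal T(a+1)$ lie in the same component. To see this, use $1-a\in H(a)$, so there is an edge to $\mathcal T(a-1)$, provided the trapezoid orbit $\mathcal P(1-a)$ is admissible and not a loop. Check it: $P(1-a)$ has endpoints $\mathcal T(1-a)=\mathcal T(a)$ and $\mathcal T(a-1)$. Thus for consecutive parameters the edge $\mathcal P(1-a)$ connects $\mathcal T(a)$ to $\mathcal T(a-1)$. It is a genuine edge whenever $1-a\notin\{\pm2,\pm2^{-1}\}$ (Lemma~\ref{lem:trapezoid-admissible}) and no loop occurs. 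In the loop case the two vertices coincide anyway, so nothing is lost.

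\textbf{Step 3: chaining along the residues.} Walk $a=3,4,\dots,p-2$ in $\mathbb F_p$. Each step $a\to a-1$ is a path unless both endpoints lie in $X_p$ but the edge is inadmissible, i.e.\ $a\in\{-1,3,1\pm2^{-1}\}$. The endpoints themselves also break the chain where $a$ or $a-1$ lies in $\{0,\pm1,2,2^{-1}\}$. So the chain of consecutive integers is broken at $O(1)$ points: around $0,\pm1,2,3,2^{-1},2^{-1}\pm1$. Every parameter of $X_p$ therefore lies in one of a bounded number of arithmetic runs, each within a single component.

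Finally, we must join these runs. Use the other edge types: for instance, $a^{-1}$ or $\frac{a-1}{a}$ sends a run near $2^{-1}$ to a run near $2$ or $-1$. Likewise, the identity $\mathcal T(a)=\mathcal T(1/a)$ (same vertex) links parameters in different runs at no cost. For example, the run containing $2^{-1}+2$ is identified via inversion with a parameter in the big run $[3,(p-1)/2]$. Checking that the finitely many runs (roughly $[3,(p-1)/2]$, $[(p+3)/2,p-2]$ with small gaps near $2^{-1}$) are all identified is a finite bookkeeping task, valid for $p\ge 17$. Small primes such as $p=17,29$ can be checked directly.

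The main obstacle is this last bookkeeping step: ensuring that every break point of the consecutive chain is bridged. That requires explicit elements of $H$-classes landing on both sides of each gap, with the needed edges admissible and not coinciding with the exceptional vertices $\mathcal T(-2)$, $\mathcal T(\mathrm i)$, $\mathcal T(\rho)$.
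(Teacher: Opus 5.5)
Your route is the paper's own. Your edge $\mathcal P(1-a)=\mathcal P(a-1)$ joining $\mathcal T(a)$ and $\mathcal T(a-1)$ is exactly the paper's edge $\mathcal P(k)$ joining $\mathcal T(k)$ and $\mathcal T(-k)=\mathcal T(k+1)$, and you correctly note that loops are harmless. However, the proof is not finished. The step you call the main obstacle, joining the runs, is left as unverified ``bookkeeping''. The one concrete bridge you offer is also false in general. For $p=17$ one has $2^{-1}+2=11$ and $11^{-1}=14$, which lies in the upper run $\{10,\dots,15\}$, not in $\{3,\dots,8\}$. Inversion sends $5\cdot2^{-1}$ to $2\cdot5^{-1}$, and where that lands depends on $p \bmod 5$. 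So inversion is not a reliable bridge, and the closing step is a gap as written.

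The missing idea takes one line: use $\theta$ instead of $\sigma$. Since $\mathcal T(a)=\mathcal T(1-a)$, and $a\mapsto1-a$ maps $\{(p+3)/2,\dots,p-2\}$ onto $\{3,\dots,(p-1)/2\}$, every vertex already equals some $\mathcal T(k)$ with $3\le k\le(p-1)/2$. In particular, $\mathcal T((p+3)/2)=\mathcal T((p-1)/2)$ glues your two runs directly.

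The bookkeeping you anticipate is in fact empty. If $k,k+1\in X_p$, then automatically $k\notin\{0,\pm1,\pm2,\pm2^{-1}\}$, so $\mathcal P(k)$ is admissible. Hence the only break in the chain $3,\dots,p-2$ is at $2^{-1}=(p+1)/2$, and there are exactly two runs. No small primes need separate checking. The degree-deficient vertices $\mathcal T(-2)$, $\mathcal T(\mathrm{i})$, $\mathcal T(\rho)$ play no role: an identification within one $H$-class needs no edge at all, and a loop only means the two endpoints coincide.

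Your Step~1 about the group generated by $\sigma$, $\theta$ and negation is never used and can be dropped.
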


\begin{proof}
In the integer representatives $\{0,1,\ldots,p-1\}$, the excluded set $\mathbb F_p\setminus X_p=\{0,1,-1,2,2^{-1}\}$ consists of $0,1,2,(p+1)/2,p-1$. Since $a$ and $1-a$ represent the same triangle orbit, every vertex of $G_p$ can be written as $\mathcal T(k)$ for some integer $k$ with $3\le k\le(p-1)/2$. For $3\le k\le(p-3)/2$, both $k$ and $-k$ lie in $X_p$, and $\mathcal T(-k)=\mathcal T(k+1)$ because $-k=1-(k+1)\in H(k+1)$. Hence, if $\mathcal T(k)\neq\mathcal T(k+1)$, the trapezoid orbit $\mathcal P(k)$ gives an edge between them. Suppressing consecutive repetitions in $\mathcal T(3),\mathcal T(4),\ldots,\mathcal T((p-1)/2)$ therefore gives a walk containing every vertex of $G_p$.
\end{proof}

For a graph $G$ and $U\subseteq V(G)$, let $\operatorname{odd}(G-U)$ denote the number of odd components of $G-U$. For $L\subseteq V(G)$, let $\partial_G(L)$ (or $\partial(L)$) denote the set of edges with exactly one endpoint in $L$, with parallel edges counted separately. A component and its vertex set are denoted by the same symbol. Since $|V(G_p)|=(p-5)/6$ is even, Tutte's $1$-factor theorem gives a $1$-factor of $G_p$ if and only if $\operatorname{odd}(G_p-U)\le|U|$ for every $U\subseteq V(G_p)$. The following bridge condition is sufficient for $G_p$ to have a $1$-factor.

\begin{condition}[Bridge condition for $G_p$]\label{cond:bridge}
Let $p>5$ be a prime with $p\equiv5\pmod{12}$. For every bridge $e$ of $G_p$, if $x^2+x-1=0$ has no solution in $\mathbb F_p$, the two components of $G_p-e$ are not both of odd order; if $x^2+x-1=0$ has a root $\rho\in\mathbb F_p$, then whenever both components of $G_p-e$ have odd order, $\mathcal T(-2)$ and $\mathcal T(\rho)$ lie in different components.
\end{condition}

\begin{proposition}\label{prop:tutte-reduction}
Let $p>5$ be a prime with $p\equiv5\pmod{12}$. If $G_p$ satisfies Condition~\ref{cond:bridge}, then $G_p$ has a $1$-factor.
\end{proposition}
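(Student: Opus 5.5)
We argue via Tutte's theorem, in the style of Petersen's theorem. Suppose, for contradiction, that $U\subseteq V(G_p)$ satisfies $\operatorname{odd}(G_p-U)>|U|$. Since $|V(G_p)|$ is even, $\operatorname{odd}(G_p-U)\equiv|U|\pmod 2$, so in fact $\operatorname{odd}(G_p-U)\ge|U|+2$.

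For each odd component $L$ of $G_p-U$ we count degrees inside $L$:
$$\sum_{x\in L}\deg x=2|E(G_p[L])|+|\partial(L)|.$$
Hence $|\partial(L)|\equiv\sum_{x\in L}\deg x\pmod 2$. By Lemma~\ref{lem:degrees-Gp}, all degrees equal $3$ except at the special vertices $\mathcal T(-2)$ and $\mathcal T(\mathrm i)$ (degree $2$) and $\mathcal T(\rho)$ (degree $1$, when $\rho$ exists). Since $|L|$ is odd, $|\partial(L)|$ is odd unless $L$ contains exactly one vertex of even degree. The even-degree vertices are $\mathcal T(-2)$ and $\mathcal T(\mathrm i)$, while $\mathcal T(\rho)$ has odd degree $1$.

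Because $G_p$ is connected (Lemma~\ref{lem:Gp-connected}) and $U\neq\emptyset$ in the relevant cases, each odd component satisfies $|\partial(L)|\ge1$. The exception is $U=\emptyset$, but then $G_p$ itself has even order, so it is not an odd component. Call an odd component \emph{light} if $|\partial(L)|=1$, that is, if its unique boundary edge is a bridge. If $L$ is not light and $|\partial(L)|$ is odd, then $|\partial(L)|\ge3$. If $|\partial(L)|$ is even, then $|\partial(L)|\ge2$, and this happens only when $L$ contains exactly one of $\mathcal T(-2),\mathcal T(\mathrm i)$.

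The edges leaving odd components all end in $U$, and
$$\sum_{u\in U}\deg u\le3|U|.$$
Suppose there are no light components. Then at most two components have boundary $2$, and all others have boundary at least $3$. This gives
$$3\operatorname{odd}(G_p-U)-2\le3|U|,$$
which forces $\operatorname{odd}(G_p-U)\le|U|$, a contradiction. More carefully, when special vertices lie in $U$ the degree sum over $U$ drops, so the bookkeeping only improves.

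The main obstacle is therefore controlling light components using Condition~\ref{cond:bridge}. A light component $L$ has a bridge $e$ as its sole boundary edge, and $L$ has odd order. The other side of $G_p-e$ is $V(G_p)\setminus L$, whose order is even minus odd, hence odd. So $e$ is a bridge with both sides odd.

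When $\rho$ does not exist, Condition~\ref{cond:bridge} forbids such bridges outright, so no light components exist and we are done.

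When $\rho$ exists, the condition says that $\mathcal T(-2)$ and $\mathcal T(\rho)$ lie on opposite sides of $e$. The parity computation also requires $\sum_{x\in L}\deg x$ to be odd, hence
$$\sum_{x\in L}(\deg x-3)\equiv0\pmod 2,$$
and the deficits are $-1$ at $\mathcal T(-2)$ and $\mathcal T(\mathrm i)$ and $-2$ at $\mathcal T(\rho)$. I would combine this with the side condition to show that every light component contains $\mathcal T(\rho)$, or at least one of the special vertices in a way that uses up deficit. Since the special vertices are few, at most a bounded number of light components exist, and each one consumes at least one special vertex's deficit.

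Finally, redo the count. Write $k$ for the number of odd components. Then
$$\sum_{L}|\partial(L)|\le\sum_{u\in U}\deg u\le3|U|-\delta_U,$$
where $\delta_U$ is the total degree deficit of the special vertices lying in $U$. Against this, $\sum_L|\partial(L)|\ge3k-\delta_{\text{out}}$, where $\delta_{\text{out}}$ is the total deficit from special vertices in odd components. Each component's boundary is at least $3$ minus the deficit of its special vertices. This holds because $|\partial(L)|\ge1$ always, and a light component must contain deficit at least $2$: either $\mathcal T(\rho)$, or both $\mathcal T(-2)$ and $\mathcal T(\mathrm i)$. The case of $\mathcal T(-2)$ together with $\mathcal T(\mathrm i)$ is the one I must rule out or absorb via the bridge condition, since in that case $\mathcal T(\rho)$ lies on the other side.

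The total deficit is at most $4$ (namely $1+1+2$). Hence
$$3k-4\le3|U|,$$
giving $k\le|U|+1$. Combined with $k\ge|U|+2$, this is a contradiction, yielding a $1$-factor by Tutte's theorem. The delicate step is checking that the inequality "boundary $\ge3-$deficit" holds for every odd component; this is where Condition~\ref{cond:bridge} enters.
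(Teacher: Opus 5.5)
Your argument is correct and is essentially the paper's proof. Both use Tutte's theorem, the parity relation $|\partial(L)|\equiv1-|L\cap\{\mathcal T(-2),\mathcal T(\mathrm i)\}|\pmod 2$, and Condition~\ref{cond:bridge} to force every odd component with a single boundary edge to contain either $\mathcal T(\rho)$ or both $\mathcal T(-2)$ and $\mathcal T(\mathrm i)$, followed by a deficit count against $\sum_{u\in U}\deg u$. Your per-component bound $|\partial(L)|\ge3-\text{deficit}$ is a tidier packaging of the paper's $s,t,a_U,b_U$ bookkeeping. The light component containing both $\mathcal T(-2)$ and $\mathcal T(\mathrm i)$ needs no ruling out: its deficit $2$ makes the required bound $|\partial(L)|\ge1$ automatic, which is exactly how the paper absorbs that case.
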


\begin{proof}
Let $U\subseteq V(G_p)$ and set $q:=\operatorname{odd}(G_p-U)$. By Tutte's $1$-factor theorem, it suffices to prove that $q\leq |U|$. This is immediate when $U=\varnothing$ or $U=V(G_p)$, so assume that $\varnothing\neq U\subsetneq V(G_p)$. Since $G_p$ is connected, every component of $G_p-U$ has at least one edge joining it to $U$.

Let $L$ be an odd component of $G_p-U$. Its boundary $\partial(L)$ consists of the edges joining $L$ to $U$. By Lemma~\ref{lem:degrees-Gp}, the only vertices of $G_p$ whose degrees differ from $3$ are $\mathcal T(-2)$ and $\mathcal T(\mathrm{i})$, both of degree $2$, and, when it exists, $\mathcal T(\rho)$ of degree $1$. The degree-sum formula gives $\sum_{x\in L}\deg_{G_p}(x)=2|E(L)|+|\partial(L)|$. Modulo $2$, the degree-$1$ vertex $\mathcal T(\rho)$ contributes the same parity as a degree-$3$ vertex, whereas each of $\mathcal T(-2)$ and $\mathcal T(\mathrm{i})$ changes the parity by $1$. Since $|L|$ is odd, we obtain
$$
|\partial(L)|\equiv 1-\bigl|L\cap\{\mathcal T(-2),\mathcal T(\mathrm{i})\}\bigr|\pmod 2.
$$
In particular, an odd component containing exactly one of $\mathcal T(-2)$ and $\mathcal T(\mathrm{i})$ has at least two boundary edges. Every other odd component has a positive odd number of boundary edges, hence either one or at least three.

Let $s$ be the number of odd components containing exactly one of $\mathcal T(-2)$ and $\mathcal T(\mathrm{i})$, and let $t$ be the number of odd components with $|\partial(L)|=1$. Summing the preceding lower bounds over all $q$ odd components of $G_p-U$ gives
$$
\sum_L|\partial(L)|\geq 3q-s-2t.
$$

The components counted by $s$ contain distinct vertices of $\{\mathcal T(-2),\allowbreak\mathcal T(\mathrm{i})\}\setminus U$. If a component $L$ counted by $t$ contains $\mathcal T(-2)$, then the parity relation above forces it to contain $\mathcal T(\mathrm{i})$ as well. Now let $L$ be a component counted by $t$ that does not contain $\mathcal T(-2)$, and denote its unique boundary edge by $e$. Then $e$ is a bridge of $G_p$. Since $|V(G_p)|$ is even and $|L|$ is odd, the other component of $G_p-e$ also has odd order. Condition~\ref{cond:bridge} therefore shows that $x^2+x-1=0$ must have a solution in $\mathbb F_p$ and that $L$ contains $\mathcal T(\rho)$.

Define $a_U:=\bigl|U\cap\{\mathcal T(-2),\mathcal T(\mathrm{i})\}\bigr|$. If $\mathcal T(\rho)$ exists, let $b_U:=1$ when $\mathcal T(\rho)\in U$ and $b_U:=0$ otherwise; if $\mathcal T(\rho)$ does not exist, set $b_U:=0$. Since distinct components of $G_p-U$ are disjoint, the preceding observations imply
$$
s+2t\leq 4-a_U-2b_U.
$$
Indeed, the two vertices $\mathcal T(-2)$ and $\mathcal T(\mathrm{i})$ outside $U$ can contribute at most $2-a_U$ to $s+2t$ (either up to two components counted by $s$, or one component counted by $t$ containing both when $a_U=0$), while $\mathcal T(\rho)$, when available outside $U$, can account for at most one component counted by $t$, contributing the remaining weight $2$. Hence
$$
\sum_L|\partial(L)|\geq 3q-4+a_U+2b_U.
$$

On the other hand, the boundary sets $\partial(L)$ of distinct odd components are pairwise disjoint, and every edge in $\partial(L)$ has its other endpoint in $U$. Summing the degrees in $G_p$ of the vertices in $U$ via Lemma~\ref{lem:degrees-Gp} therefore gives
$$
\sum_L|\partial(L)|\leq 3|U|-a_U-2b_U.
$$

Suppose toward a contradiction that $q>|U|$. Modulo $2$, we have $q\equiv|V(G_p-U)|=|V(G_p)|-|U|\equiv|U|\pmod2$ because $|V(G_p)|$ is even, so $q\geq |U|+2$. The last two inequalities then give
$$
3|U|+2+a_U+2b_U\leq\sum_L|\partial(L)|\leq 3|U|-a_U-2b_U,
$$
which is impossible. Therefore $\operatorname{odd}(G_p-U)\leq |U|$ for every $U\subseteq V(G_p)$, and Tutte's $1$-factor theorem implies that $G_p$ has a $1$-factor.
\end{proof}

\subsection{Bridge condition for \texorpdfstring{$G_p$}{G(p)}}

To verify Condition~\ref{cond:bridge} for $p>5$, we analyze the parameter set of a hypothetical odd bridge component violating the condition and show that it forces two arithmetic congruences on $p$; in Subsection~4.4, these congruences will be combined with a spectral bound on $\mathbb P^1(\mathbb F_p)$ to rule out all such components.

\begin{lemma}\label{lem:bridge-symmetric-difference}
Let $p>5$ be a prime with $p\equiv5\pmod{12}$. Suppose that Condition~\ref{cond:bridge} fails for a bridge $e$ of $G_p$, and let $L$ be the odd component of $G_p-e$ not containing $\mathcal T(-2)$ (and not containing $\mathcal T(\rho)$ when $\rho$ exists). Write $Y:=\bigcup_{\mathcal T(a)\in L}H(a)$. Then $Y$ is invariant under $\sigma(a)=a^{-1}$ and $\theta(a)=1-a$, and
$$
|Y|=6|L|,\qquad Y\cup(-Y)\subseteq X_p,\qquad |Y\mathbin{\triangle}(-Y)|=4.
$$
Moreover, $Y\cup(-Y)$ contains none of $0,\pm1,\pm2,\pm2^{-1},\pm\mathrm{i}$ and, when $\rho$ exists, none of $\rho,-1-\rho,-\rho,1+\rho$.
\end{lemma}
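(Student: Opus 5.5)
The plan is to read every claimed property directly off the local structure of $G_p$ at the vertices of $L$, using the inverse-pair description of edges from the proof of Lemma~\ref{lem:degrees-Gp}.

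\emph{Invariance and size of $Y$.} Each $H(a)$ is the orbit of $a$ under the group generated by $\sigma$ and $\theta$, so every $H(a)$, and hence their union $Y$, is $\sigma$- and $\theta$-invariant. Distinct vertices $\mathcal T(a)$ have disjoint parameter sets by Lemma~\ref{lem:affine-triple-orbits}, and each set has size $6$ by Lemma~\ref{lem:p5-free}. Therefore $|Y|=6|L|$, and $Y\subseteq X_p$ by definition.

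\emph{Which special vertices lie in $L$.} Since $L$ is an odd component with $|\partial(L)|=1$, the parity relation $|\partial(L)|\equiv1-|L\cap\{\mathcal T(-2),\mathcal T(\mathrm{i})\}|\pmod2$ from the proof of Proposition~\ref{prop:tutte-reduction} forces $|L\cap\{\mathcal T(-2),\mathcal T(\mathrm{i})\}|$ to be even. As $\mathcal T(-2)\notin L$ by hypothesis, also $\mathcal T(\mathrm{i})\notin L$. Moreover $\mathcal T(\rho)\notin L$ when $\rho$ exists. Consequently every vertex of $L$ has all three of its inverse pairs $\{u,u^{-1}\}\subseteq H(a)$ giving genuine edges $\mathcal P(u)$ of $G_p$. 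For each such pair, $-u\in X_p$ (no isosceles failure) and $\mathcal T(-u)\ne\mathcal T(u)$ (no loop). In particular $-Y\subseteq X_p$.

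\emph{The count $|Y\triangle(-Y)|=4$.} This is the step needing care. Note that $-u\in Y$ holds if and only if $\mathcal T(-u)\in L$, and in that case $-u^{-1}\in H(-u)\subseteq Y$ as well, so the property is decided pair by pair. The $3|L|$ inverse pairs in $Y$ are thus in bijection with edge-ends at vertices of $L$. An edge-end whose edge stays inside $L$ has its negated pair in $Y$. The unique edge-end belonging to the bridge $e$ has its negated pair outside $Y$. Hence $Y\setminus(-Y)$ is exactly the two-element inverse pair carrying $e$. By symmetry $|(-Y)\setminus Y|=2$, which gives $|Y\triangle(-Y)|=4$.

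\emph{Excluded values.} The values $0,\pm1$ lie outside $X_p$. Next, $2,2^{-1}\in H(-1)$, which is not a vertex, and $-2,-2^{-1}\in H(-2)$ with $\mathcal T(-2)\notin L$. Both $\pm\mathrm{i}$ lie in $H(\mathrm{i})$ with $\mathcal T(\mathrm{i})\notin L$. When $\rho$ exists, the four values $\rho,-1-\rho,-\rho,1+\rho$ all lie in $H(\rho)$, as computed in Lemma~\ref{lem:degrees-Gp}. None of these parameters is in $Y$. Each of these exclusion sets is closed under negation, so none of the values lies in $-Y$ either.
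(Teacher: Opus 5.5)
Your proof is correct and follows essentially the same route as the paper: the parity relation forces $\mathcal T(\mathrm{i})\notin L$, so every inverse pair in $Y$ determines a genuine edge of $G_p$, and $Y\mathbin{\triangle}(-Y)$ consists of the inverse pair at the bridge's endpoint in $L$ together with its negative. The only cosmetic difference is that you obtain $-Y\subseteq X_p$ from the admissibility of each such edge, whereas the paper observes directly that $-2$ and $-2^{-1}$, both in $H(-2)$, are the only elements of $X_p$ whose negatives leave $X_p$.
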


\begin{proof}
By Lemmas~\ref{lem:affine-triple-orbits} and~\ref{lem:p5-free}, the sets $H(a)$ corresponding to distinct vertices of $L$ are pairwise disjoint $6$-subsets of $X_p$. Their union $Y$ is therefore invariant under $\sigma$ and $\theta$, and $|Y|=6|L|$.

Since $e$ is the unique edge leaving $L$, we have $|\partial(L)|=1$. Since $|L|$ is odd and $\mathcal T(-2)\notin L$, the parity congruence in the proof of Proposition~\ref{prop:tutte-reduction} forces $\mathcal T(\mathrm{i})\notin L$. Thus $Y$ is disjoint from $H(-2)$, $H(\mathrm{i})$, and (when $\rho$ exists) $H(\rho)$. The only elements $y\in X_p$ for which $-y\notin X_p$ are $-2$ and $-2^{-1}$, both of which belong to $H(-2)$. Thus $-Y\subseteq X_p$, and hence $Y\cup(-Y)\subseteq X_p$. Because $\{\mathrm{i},-\mathrm{i}\}\subseteq H(\mathrm{i})$ and $\{\rho,-1-\rho,-\rho,1+\rho\}\subseteq H(\rho)$ are negation-invariant, none of these elements belongs to $Y\cup(-Y)$.

By the proof of Lemma~\ref{lem:degrees-Gp}, an inverse pair can fail to determine an edge of $G_p$ only at $\mathcal T(-2)$, $\mathcal T(\mathrm{i})$, or $\mathcal T(\rho)$. Since none of these vertices lies in $L$, every inverse pair in $Y$ determines an edge of $G_p$. Let $I=\{u,u^{-1}\}\subseteq Y$ be the inverse pair corresponding to the endpoint of the bridge $e$ in $L$. Since the other endpoint of $e$ lies outside $L$, its negative $-I=\{-u,-u^{-1}\}$ is disjoint from $Y$, so $I\subseteq Y\setminus(-Y)$ and $-I\subseteq(-Y)\setminus Y$. For every other inverse pair $J\subseteq Y$, the corresponding edge of $G_p$ has both endpoints in $L$, so $-J\subseteq Y$ and hence $J\subseteq Y\cap(-Y)$. Since the inverse pairs partition $Y$, we obtain $Y\setminus(-Y)=I$ and $(-Y)\setminus Y=-I$, and therefore $|Y\mathbin{\triangle}(-Y)|=4$.
\end{proof}

\begin{lemma}\label{lem:two-interval-form}
Let $p\ge5$ be an odd prime. Let $Z\subseteq\mathbb F_p$ be invariant under $t\mapsto t^{-1}$ and $t\mapsto1-t$, with $Z\cup(-Z)\subseteq X_p$, $|Z\mathbin{\triangle}(-Z)|=4$, and $|Z|=2\ell$ for an odd integer $\ell$. Then there exists $\xi\in\mathbb F_p$ such that
$$
Z=\{\xi+j:1\le j\le\ell\}\cup\{-\xi-j:0\le j<\ell\},
$$
where the two sets are disjoint. Putting $I_\xi:=\{\xi,\xi+1,\ldots,\xi+\ell\}$, we have $I_\xi\cap(-I_\xi)=\varnothing$, $Z\cup(-Z)=I_\xi\cup(-I_\xi)$, $(-Z)\setminus Z=\{\xi,-\xi-\ell\}$, and $\xi(\xi+\ell)=-1$.
\end{lemma}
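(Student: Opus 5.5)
The plan is to turn the two invariances into a single translation, namely $x\mapsto x+1$ carrying $-Z$ onto $Z$. This organizes $Z\cup(-Z)$ into two chains of consecutive residues, and negation then pins the chains down.

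\emph{Step 1: the shift identity.} If $w\in -Z$, write $w=-z$ with $z\in Z$. Then $w+1=1-z\in Z$ by $\theta$-invariance. Hence $(-Z)+1\subseteq Z$, and since both sets have size $2\ell$, we get $(-Z)+1=Z$. Consider the directed graph $D$ on $S:=Z\cup(-Z)$ with an arc $x\to x+1$ for each $x\in -Z$. In $D$:
\begin{itemize}
\item each vertex of $-Z$ has out-degree $1$ and each vertex of $Z\setminus(-Z)$ has out-degree $0$;
\item each vertex of $Z$ has in-degree $1$ and each vertex of $(-Z)\setminus Z$ has in-degree $0$.
\end{itemize}
So $D$ is a disjoint union of directed paths and cycles. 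A cycle would consist of $p$ consecutive residues, hence all of $\mathbb F_p$, which is impossible because $0\notin X_p\supseteq S$. Therefore $D$ is a union of paths. Each path starts at a point of $(-Z)\setminus Z$ and ends at a point of $Z\setminus(-Z)$.

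The hypothesis $|Z\mathbin{\triangle}(-Z)|=4$ gives $|(-Z)\setminus Z|=|Z\setminus(-Z)|=2$, so there are exactly two paths. As sets they are disjoint intervals of consecutive residues, of total size $|S|=|Z|+2=2\ell+2$. We do not merge paths that happen to abut as integer intervals; we work with the paths themselves.

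\emph{Step 2: negation swaps the two paths.} Negation maps $S$ to itself and swaps $Z$ with $-Z$. It sends an arc $x\to x+1$ with $x\in -Z$ to the pair $-x-1,\,-x$, where $-x-1\in -Z$, so this is the arc $-x-1\to -x$ traversed in reverse. Hence negation permutes the two paths while reversing orientation.

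Suppose a path $Q$ were mapped to itself. Then $Q$ would be an interval of consecutive residues with $Q=-Q$, and there are two cases:
\begin{itemize}
\item if $|Q|$ is odd, its centre is fixed by negation, so $0\in Q$;
\item if $|Q|$ is even, its two middle elements $x,x+1$ satisfy $x+1=-x$, so $x=-2^{-1}$.
\end{itemize}
Both $0$ and $-2^{-1}$ are excluded from $X_p$, so both cases are impossible. Hence negation swaps the two paths, and in particular they have equal size $\ell+1$.

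Write one path as $I_\xi=\{\xi,\xi+1,\ldots,\xi+\ell\}$. The other path is then $-I_\xi=\{-\xi-\ell,\ldots,-\xi\}$, with start $-\xi-\ell$. Disjointness of the paths gives $I_\xi\cap(-I_\xi)=\varnothing$ and $S=I_\xi\cup(-I_\xi)$. Removing the two start points, which are exactly the points of $(-Z)\setminus Z$, leaves $Z$. This yields the displayed formula for $Z$, the identity $(-Z)\setminus Z=\{\xi,-\xi-\ell\}$, and disjointness of the two pieces of $Z$.

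\emph{Step 3: the product relation.} Since $Z$ is invariant under $t\mapsto t^{-1}$, so are $-Z$ and the difference $(-Z)\setminus Z=\{\xi,-\xi-\ell\}$. Thus $\xi^{-1}\in\{\xi,-\xi-\ell\}$. The case $\xi^{-1}=\xi$ gives $\xi=\pm1$, which is excluded since $\xi\in S\subseteq X_p$. Hence $\xi^{-1}=-\xi-\ell$, that is, $\xi(\xi+\ell)=-1$.

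The main obstacle is Step 2, namely excluding a negation-invariant path. That is precisely where the exclusions $0,\pm2^{-1}\notin X_p$ enter. One also has to be careful in Step 1 to define the structure through the arcs of $D$ rather than through maximal runs of consecutive elements of $S$. Otherwise two paths that abut as residues could merge, and the count of exactly two pieces would fail. The oddness of $\ell$ seems not to be needed in this argument; it matters only for the later arithmetic.
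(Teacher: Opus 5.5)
Your proof is correct, and it takes a genuinely different route from the paper's, although both rest on the same identity $(-Z)+1=Z$ (written in the paper as $\ind_Z(t+1)=\ind_Z(-t)$).

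\textbf{The paper's route.} The paper uses this identity to count transitions of $\ind_Z$ along the cyclic order of $\mathbb F_p$. The count equals $|Z\mathbin{\triangle}(-Z)|=4$, so $Z$ itself is a union of exactly two maximal intervals. The reflection $t\mapsto1-t$ then swaps these intervals, since an invariant interval would contain $2^{-1}$ or $\{0,1\}$, and this gives them equal length $\ell$. Afterwards the paper still needs the cardinality comparison $|Z\cup(-Z)|=2\ell+2=|I_\xi|+|-I_\xi|$ to get $I_\xi\cap(-I_\xi)=\varnothing$ and to identify $(-Z)\setminus Z$.

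\textbf{Your route.} You decompose $S=Z\cup(-Z)$ into the two arc-paths of the successor digraph $x\to x+1$ ($x\in-Z$), whose sources are exactly $(-Z)\setminus Z$. You then use negation, which reverses arcs, to swap the two paths. This gives three things with no extra counting: the disjointness of $I_\xi$ and $-I_\xi$, the identification of $(-Z)\setminus Z$ as the path sources, and the formula for $Z$. Your arc-based bookkeeping also avoids any worry about adjacent runs merging.

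\textbf{Comparison.} Your argument is the $z\mapsto z+1$ analogue of the digraph argument the paper itself uses later in Lemma~\ref{lem:bridge-congruences}, with $\varphi(z)=1/(z+1)$ and arc reversal by $\iota(z)=-1/z$. So it fits the paper's framework naturally. The paper's transition count is somewhat shorter.

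\textbf{One small correction.} For $p>5$, $-2^{-1}$ is \emph{not} excluded from $X_p$; only $2^{-1}$ is. Your even case still goes through, because the two middle elements are $-2^{-1}$ and $-2^{-1}+1=2^{-1}$, and $2^{-1}\notin X_p$. Equivalently, $S=-S\subseteq X_p$ excludes $-2^{-1}$. You should state it that way.

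You are also right that the oddness of $\ell$ is not needed here; the paper's proof does not use it either.
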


\begin{proof}
Here an interval consists of consecutive elements in the cyclic order $0,1,\ldots,p-1,0$ on $\mathbb F_p$. Write $\ind_Z$ for the indicator function of $Z$. Since $Z$ is invariant under $t\mapsto1-t$, we have $\ind_Z(t+1)=\ind_Z(-t)$, and hence
$$
\sum_{t\in\mathbb F_p}|\ind_Z(t+1)-\ind_Z(t)|
=
\sum_{t\in\mathbb F_p}|\ind_Z(-t)-\ind_Z(t)|
=|Z\mathbin{\triangle}(-Z)|=4.
$$
The left-hand side counts the transitions between $Z$ and its complement in the cyclic order of $\mathbb F_p$. Each nonempty maximal interval contained in $Z$ contributes two transitions, so $Z$ consists of exactly two maximal intervals.

The involution $t\mapsto1-t$ permutes these two maximal intervals. It cannot preserve either interval: an invariant interval of odd length would contain the fixed point $2^{-1}$, while an invariant interval of even length would contain $\{0,1\}$, contradicting $Z\subseteq X_p$ in either case. Thus the two intervals are exchanged and have the same length $\ell$. Choosing $\xi$ so that the first interval is $\{\xi+1,\ldots,\xi+\ell\}$ gives the displayed form of $Z$.

Taking negatives shows that $Z\cup(-Z)=I_\xi\cup(-I_\xi)$. Since $|Z\cup(-Z)|=|Z|+\frac12|Z\mathbin{\triangle}(-Z)|=2\ell+2=|I_\xi|+|-I_\xi|$, the two sets $I_\xi$ and $-I_\xi$ are disjoint, and comparing the two descriptions gives $(-Z)\setminus Z=\{\xi,-\xi-\ell\}$. Because both $Z$ and $-Z$ are invariant under inversion and $\pm1\notin Z\cup(-Z)$, inversion exchanges $\xi$ and $-\xi-\ell$, giving $\xi(\xi+\ell)=-1$.
\end{proof}

Let $F_j$ and $\Lambda_j$ denote the Fibonacci and Lucas numbers, defined by $F_0=0$, $F_1=1$, $\Lambda_0=2$, $\Lambda_1=1$, and $u_{j+2}=u_{j+1}+u_j$ for $j\ge0$, so that $\Lambda_j=F_{j-1}+F_{j+1}$ for $j\ge1$. For every odd integer $m\ge3$, define
$$
\Pi_m:=\prod_{r=1}^{(m-1)/2}\bigl(r(m-r)-1\bigr).
$$

\begin{lemma}\label{lem:bridge-congruences}
Let $p>5$ be a prime, and let $\ell\ge3$ be an odd integer. Let $Z\subseteq\mathbb F_p$ be invariant under $t\mapsto t^{-1}$ and $t\mapsto1-t$, with $Z\cup(-Z)\subseteq X_p$, and suppose that $\xi\in\mathbb F_p$ satisfies
$$
Z=\{\xi+j:1\le j\le\ell\}\cup\{-\xi-j:0\le j<\ell\},
$$
$I_\xi\cap(-I_\xi)=\varnothing$ for $I_\xi:=\{\xi,\xi+1,\ldots,\xi+\ell\}$, and $\xi(\xi+\ell)=-1$. If $Z\cup(-Z)$ contains no root of $x^2=-1$ and no root of $x^2+x-1=0$, then
$$
p\mid\Lambda_\ell-\ell F_\ell,\qquad p\mid\Pi_\ell^2-1.
$$
\end{lemma}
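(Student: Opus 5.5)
The congruence $p\mid\Pi_\ell^2-1$ comes from comparing two evaluations of the product of all elements of $Z$. The congruence $p\mid\Lambda_\ell-\ell F_\ell$ I would obtain from a Möbius-matrix argument whose key structural step I have not yet worked out.

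\emph{The congruence $p\mid\Pi_\ell^2-1$.} First, $Z$ is invariant under $t\mapsto t^{-1}$ and $\pm1\notin Z$, so $Z$ splits into inverse pairs $\{t,t^{-1}\}$ and $\prod_{t\in Z}t=1$. Second, write $A=\{\xi+1,\ldots,\xi+\ell\}$ and $B=\{-\xi-j:0\le j<\ell\}$, and put $Q:=\prod_{j=0}^{\ell}(\xi+j)$. Then $\prod A=Q/\xi$, and since $\ell$ is odd, $\prod B=-Q/(\xi+\ell)$. By the disjointness in Lemma~\ref{lem:two-interval-form},
$$
1=\prod A\cdot\prod B=-\frac{Q^2}{\xi(\xi+\ell)}=Q^2,
$$
using $\xi(\xi+\ell)=-1$. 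Third, pair the factors of $Q$ as $j=r$ and $j=\ell-r$ for $0\le r\le(\ell-1)/2$. Each pair gives
$$
(\xi+r)(\xi+\ell-r)=\xi(\xi+\ell)+r(\ell-r)=r(\ell-r)-1.
$$
The $r=0$ pair contributes $-1$, so $Q=-\Pi_\ell$. Hence $\Pi_\ell^2=1$ in $\mathbb F_p$. The hypothesis that $Z\cup(-Z)$ avoids $\pm\mathrm{i}$ serves to rule out degenerate pairings $t=t^{-1}$ or $t=-t^{-1}$ in this step.

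\emph{The congruence $p\mid\Lambda_\ell-\ell F_\ell$.} I would encode the invariances as Möbius maps on $\mathbb P^1(\mathbb F_p)$ and decide how they act on $A$ and $B$.
\begin{itemize}
\item Since $\theta(t)=1-t$ exchanges $A$ and $B$, the inversion $\sigma$ either fixes each of $A$ and $B$ setwise or exchanges them. The maps $t\mapsto t^{-1}$ and $t\mapsto1-t$ generate the anharmonic group $S_3$, whose elements of order $3$ are $t\mapsto(1-t)^{-1}$ and $t\mapsto1-t^{-1}$.
\item The fixed points of $t\mapsto1+t^{-1}$, i.e.\ $\theta\circ\sigma\circ$ negation, are the roots of $x^2-x-1$, and its iterates have matrix $\begin{pmatrix}1&1\\1&0\end{pmatrix}^{n}=\begin{pmatrix}F_{n+1}&F_n\\F_n&F_{n-1}\end{pmatrix}$. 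This is where the Fibonacci and Lucas numbers should enter.
\item $\xi$ is a fixed point of $t\mapsto-\ell-t^{-1}$, which has matrix $\begin{pmatrix}-\ell&-1\\1&0\end{pmatrix}$. The endpoint data $\xi,\ \xi+\ell=-\xi^{-1}$ link this matrix to translation by $\ell$.
\end{itemize}

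The plan is to follow a chain of elements $\xi+j\mapsto(\xi+j)^{-1}\mapsto1-(\xi+j)^{-1}\mapsto\cdots$ inside $Z\cup(-Z)=I_\xi\cup(-I_\xi)$. Each step should move by a unit along an interval, giving a word in the two generators equal to a power of $\begin{pmatrix}1&1\\1&0\end{pmatrix}$ conjugated by a translation. Comparing traces with the matrix fixing $\xi$, and using $\xi^2+\ell\xi+1=0$ to eliminate $\xi$, should produce $\Lambda_\ell=\ell F_\ell$, since $\Lambda_\ell=F_{\ell-1}+F_{\ell+1}$ is the trace of the $\ell$-th power. The exclusion of the roots of $x^2+x-1$ guarantees that this Fibonacci Möbius map has no fixed point in the chain, so the chain has full length $\ell$.

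As a fallback I would compare further symmetric functions of $A$ with those of $\sigma(A)$. For instance, $\sum_{t\in Z}t=\ell$ and $\sum_{t\in Z}t^{-1}$ must agree; the first power sum is automatic, but higher ones, or the product over $A$ alone in the case $\sigma(A)=A$, give polynomial relations in $\xi$, which I would reduce modulo $\xi^2+\ell\xi+1$.

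\emph{Main obstacle.} The hard step is the structural one: showing that $\sigma$ acts on the interval $A$ rigidly enough, either mapping $A$ onto itself or onto $B$ in a controlled order, to produce the Fibonacci matrix identity. I have not yet pinned down how to derive this rigidity from interval invariance alone.
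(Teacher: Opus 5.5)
Your proof of $p\mid\Pi_\ell^2-1$ is correct and is essentially the paper's argument. You multiply over $Z=A\sqcup B$ rather than over $I_\xi\sqcup(-I_\xi)$, and both versions work. It uses only $0,\pm1\notin Z$, so your remark that $\pm\mathrm i\notin Z\cup(-Z)$ is needed there is misplaced.

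The genuine gap is $p\mid\Lambda_\ell-\ell F_\ell$. You leave it unproved, and your sketch misses the structure that makes it work. A chain built only from $\sigma$ and $\theta$, such as $\xi+j\mapsto(\xi+j)^{-1}\mapsto1-(\xi+j)^{-1}$, stays inside one six-element set $H(a)$, because these maps generate the anharmonic group. It never travels along the interval. You have in effect found the right M\"obius map: your $t\mapsto1+t^{-1}$ is conjugate by negation to $\varphi^{-1}$, where $\varphi(z)=1/(z+1)$. The point you miss is that negation is not a symmetry of $Z$. The interval form gives $-Z=Z-1$, so $\varphi$ maps $-Z$ bijectively onto $Z$. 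The right object is therefore the functional digraph $z\to\varphi(z)$ ($z\in-Z$) on $Z\cup(-Z)$. By in/out-degree counting, it is a union of directed cycles and two directed paths, running from the sources $\xi,-\xi-\ell\in(-Z)\setminus Z$ to the sinks $-\xi,\xi+\ell\in Z\setminus(-Z)$.

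Two further ingredients finish the proof, and neither appears in your plan.

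First, $\iota(z)=-1/z$ preserves $Z\cup(-Z)$ and satisfies $\iota\varphi\iota=\varphi^{-1}$, so it reverses every edge. An $\iota$-invariant path or cycle would contain either a vertex with $z^2=-1$ or an edge reversed onto itself, which forces $z=-2^{-1}$. Both are excluded by the hypotheses, and this is where $\pm\mathrm i\notin Z\cup(-Z)$ is really used. Hence $\iota$ swaps the two paths and pairs off the cycles. Since $\iota(\xi)=\xi+\ell$, the path starting at $\xi$ must end at $-\xi$.

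Second, contrary to your expectation that the chain ``has full length $\ell$'', this path has some length $t\le\ell$. The exclusion of the roots of $x^2+x-1$ is used instead to show that every cycle has length exactly $\operatorname{ord}(\varphi)$ in $\operatorname{PGL}_2(\mathbb F_p)$. Indeed, $\varphi^m$ fixing a cycle point and both fixed points of $\varphi$ must be the identity. Counting the $2\ell$ edges gives $\ell=t+k\operatorname{ord}(\varphi)$, hence $\varphi^\ell(\xi)=\varphi^t(\xi)=-\xi$.

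Only now does the Fibonacci matrix enter. Substituting into $\varphi^\ell(z)=(F_{\ell-1}z+F_\ell)/(F_\ell z+F_{\ell+1})$ gives $F_\ell(\xi^2+1)+\Lambda_\ell\xi=0$. Then $\xi^2+1=-\ell\xi$ and $\xi\ne0$ yield $p\mid\Lambda_\ell-\ell F_\ell$. This is an orbit relation rather than a trace identity, and your fallback of comparing power sums does not produce it.
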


\begin{proof}
The displayed interval representation gives $-Z=Z-1$. Regard $\varphi(z)=1/(z+1)$ as an element of $\operatorname{PGL}_2(\mathbb F_p)$ acting on $\mathbb P^1(\mathbb F_p)$. Since $z+1\in Z$ for every $z\in-Z$ and $Z\subseteq X_p$ is invariant under inversion, the restriction of $\varphi$ is a bijection from $-Z$ to $Z$. On the vertex set $Z\cup(-Z)$, draw a directed edge $z\to\varphi(z)$ for each $z\in-Z$. The two vertices $\xi$ and $-\xi-\ell$ in $(-Z)\setminus Z$ have indegree $0$ and outdegree $1$, the two vertices $-\xi$ and $\xi+\ell$ in $Z\setminus(-Z)$ have indegree $1$ and outdegree $0$, and all vertices in $Z\cap(-Z)$ have indegree and outdegree $1$. Thus the directed graph consists of two directed paths and some directed cycles.

Let $\iota(z):=-1/z$. Since $Z$ and $-Z$ are invariant under inversion, $\iota$ preserves $Z\cup(-Z)$ and satisfies $\iota\varphi=\varphi^{-1}\iota$, so it reverses every directed edge. A path or cycle preserved by $\iota$ would contain either a fixed point of $\iota$ (satisfying $z^2=-1$) or an edge reversed onto itself (giving $z=-2^{-1}$). Both are excluded by the hypotheses and by $Z\cup(-Z)\subseteq X_p$. Therefore $\iota$ exchanges the two paths and pairs the directed cycles.

The map $\iota$ sends the path beginning at $\xi$ to the other path, with its direction reversed. Since $\xi(\xi+\ell)=-1$, we have $\iota(\xi)=\xi+\ell$, so the other path ends at $\xi+\ell$. Therefore the path beginning at $\xi$ ends at $-\xi$; denote its length by $t$.

Since $p>5$, the fixed-point equation $z^2+z-1=0$ has two distinct roots over the algebraic closure of $\mathbb F_p$. Let $m$ be the length of a directed cycle through $z$. Then $\varphi^m$ fixes $z$ and the two fixed points of $\varphi$. The hypotheses exclude $z^2+z-1=0$, so these three points are distinct. Because a nonidentity fractional linear transformation cannot fix three distinct points, $\varphi^m$ is the identity, and the minimality of $m$ gives $m=\operatorname{ord}(\varphi)$, the order of $\varphi$ in $\operatorname{PGL}_2(\mathbb F_p)$.

The cycles occur in pairs, so counting the $2\ell$ directed edges gives $2\ell=2t+2k\operatorname{ord}(\varphi)$, or $\ell=t+k\operatorname{ord}(\varphi)$, for some $k\geq0$. Hence $\varphi^\ell(\xi)=\varphi^t(\xi)=-\xi$. Substituting this into
$$
\varphi^\ell(z)=\frac{F_{\ell-1}z+F_\ell}{F_\ell z+F_{\ell+1}}
$$
and using $\xi^2+1=-\ell\xi$ gives
$$
0=F_\ell(\xi^2+1)+\Lambda_\ell\xi
=(\Lambda_\ell-\ell F_\ell)\xi.
$$
Since $\xi\ne0$, we obtain $p\mid\Lambda_\ell-\ell F_\ell$.

For the second congruence, the relation $\xi(\xi+\ell)=-1$ gives
$$
(\xi+r)(\xi+\ell-r)=r(\ell-r)-1
$$
for $1\leq r\leq(\ell-1)/2$, and hence $\prod_{z\in I_\xi}z=-\Pi_\ell$. The disjoint union $I_\xi\cup(-I_\xi)=Z\cup(-Z)$ is invariant under inversion and contains none of $0,\pm1$, so the product of all its elements is $1$. Since $|I_\xi|=\ell+1$ is even, the products over $I_\xi$ and $-I_\xi$ are equal. Therefore $\Pi_\ell^2=1$ in $\mathbb F_p$.
\end{proof}

\subsection{A spectral bound for bridge components}

To bound the size of a bridge component, we first establish a spectral estimate on the projective line over $\mathbb F_p$ for an arbitrary odd prime $p$, which will also be used in Section~\ref{sec:p1}.

Let $p$ be an odd prime, and write $\Omega:=\mathbb P^1(\mathbb F_p)=\mathbb F_p\cup\{\infty\}$. We extend the maps $\sigma(a)=a^{-1}$ and $\theta(a)=1-a$, together with negation and the translations $a\mapsto a\pm1$, to permutations of $\Omega$ by setting
$$
0^{-1}=\infty,\qquad \infty^{-1}=0,\qquad -\infty=\infty,\qquad 1-\infty=\infty,\qquad \infty\pm1=\infty.
$$
Let $\Gamma_p$ be the Schreier multigraph on $\Omega$ determined by the three maps $a\mapsto a+1$, $a\mapsto a-1$, and $a\mapsto-a^{-1}$. For real-valued functions on $\Omega$, we equip $\mathbb R^\Omega$ with the counting inner product $\langle f,g\rangle:=\sum_{a\in\Omega}f(a)g(a)$. The unnormalized Laplacian of $\Gamma_p$ is the self-adjoint operator $\Delta$ defined by
$$
(\Delta f)(a):=3f(a)-f(a+1)-f(a-1)-f(-a^{-1}).
$$

\begin{lemma}[{\cite[Theorem~4.4.2(ii) and the following paragraph, p.~53]{Lubotzky}}]\label{lem:projective-spectral-gap}
Let $p$ be an odd prime. For every real-valued function $f$ on $\Omega$ with $\sum_{z\in\Omega}f(z)=0$, one has
$$
\langle\Delta f,f\rangle\geq0.003\,\langle f,f\rangle.
$$
\end{lemma}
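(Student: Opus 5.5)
The plan is to recognize $\Gamma_p$ as a Schreier graph of $\operatorname{SL}_2(\mathbb Z)$ on the cosets of a congruence subgroup. Then the spectral gap follows from Selberg's eigenvalue bound, transferred to the combinatorial Laplacian by a comparison theorem. This is precisely the argument behind the cited result in \cite{Lubotzky}; here I only record how our operator $\Delta$ matches that setting.

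First, $\operatorname{SL}_2(\mathbb Z)$ acts on $\Omega=\mathbb P^1(\mathbb F_p)$ by fractional linear transformations after reduction modulo $p$. Set $T=\bigl(\begin{smallmatrix}1&1\\0&1\end{smallmatrix}\bigr)$ and $S=\bigl(\begin{smallmatrix}0&-1\\1&0\end{smallmatrix}\bigr)$. Then $T$, $T^{-1}$ and $S$ act as $a\mapsto a+1$, $a\mapsto a-1$ and $a\mapsto-a^{-1}$, with exactly the conventions at $0$ and $\infty$ fixed above. This action is transitive, and the stabilizer of $\infty$ is (up to transposition) the Hecke congruence subgroup $\Gamma_0(p)$. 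Hence $\Omega\cong\operatorname{SL}_2(\mathbb Z)/\Gamma_0(p)$, and $\Gamma_p$ is the Schreier graph of this coset space with respect to the symmetric generating multiset $\{T,T^{-1},S\}$.

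Next, I check self-adjointness. The three maps are permutations, $T^{-1}$ is the inverse of $T$, and $S$ is an involution on $\Omega$. So the adjacency operator is symmetric, and $\Delta=3I-A$ is self-adjoint and positive semidefinite. Fixed points of $S$, the roots of $z^2=-1$, give loops, which contribute nothing to $\langle\Delta f,f\rangle=\sum_{\text{edges}}(f(x)-f(y))^2$. Constants span the kernel, since the graph is connected. The claim is therefore that the second eigenvalue satisfies $\lambda_1(\Gamma_p)\ge0.003$, uniformly in $p$.

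The key input is Selberg's theorem: the smallest nonzero eigenvalue of the hyperbolic Laplacian on $\Gamma_0(p)\backslash\mathbb H$ is at least $3/16$. To transfer this to the graph, I would invoke Burger's comparison theorem (equivalently, the discretization argument in \cite[Ch.~4]{Lubotzky}). For a fixed lattice $\Gamma=\operatorname{SL}_2(\mathbb Z)$ with a fixed fundamental domain and generating set, and any finite-index subgroup $H$, the combinatorial gap of the Schreier graph on $\Gamma/H$ is bounded below by an explicit constant times $\min\{\lambda_1(H\backslash\mathbb H),\,c_0\}$. The constant depends only on the geometry of the fundamental domain of $\operatorname{SL}_2(\mathbb Z)$ and on $S,T$, not on $H$. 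Applying this with $H=\Gamma_0(p)$ and $\lambda_1\ge3/16$ gives a uniform constant, which \cite{Lubotzky} evaluates to at least $0.003$.

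The main obstacle is this last transfer step. It requires handling the cusp, since $\operatorname{SL}_2(\mathbb Z)\backslash\mathbb H$ is noncompact and one must truncate and control functions high in the cusp. It also requires tracking the constants carefully enough to obtain the explicit value $0.003$. Since the statement is quoted verbatim from \cite[Theorem~4.4.2(ii)]{Lubotzky}, I would rely on that reference for this step rather than reprove it. I would verify only that the generators, the normalization of $\Delta$ (unnormalized, degree $3$) and the counting inner product coincide with those used there.
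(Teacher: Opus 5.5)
Your proposal is correct and follows the paper's proof: both identify $\Gamma_p$ with the Schreier graph of the modular group acting on the cosets of $\Gamma_0(p)$, match $\Delta$ with Lubotzky's unnormalized degree-$3$ Laplacian, take the constant $0.003$ from the cited result, and note that the constants span the kernel. The differences are minor. The paper works in $\operatorname{PSL}_2(\mathbb Z)$ so that $\{T,T^{-1},S\}$ is inverse-closed, whereas you handle this by noting that $S$ acts as an involution on $\Omega$; your Selberg-plus-comparison sketch is background the paper omits; and your hedge ``up to transposition'' is unnecessary, since the stabilizer of $\infty$ is exactly $\Gamma_0(p)$, the subgroup with $c\equiv0\pmod p$.
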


\begin{proof}
Let $\Gamma=\operatorname{PSL}_2(\mathbb Z)$, and let $T,S\in\Gamma$ be represented by
$$
T=\begin{pmatrix}1&1\\0&1\end{pmatrix},\qquad
S=\begin{pmatrix}0&1\\-1&0\end{pmatrix}.
$$
Reduction modulo $p$ gives an action of $\Gamma$ on $\Omega$ by fractional linear transformations, with $Tz=z+1$ and $Sz=-z^{-1}$. Since $S^2=1$ in $\Gamma$, the generating set $\{T,T^{-1},S\}$ is inverse-closed.

Let $H_p$ be the image in $\Gamma$ of the congruence subgroup
$$
\Gamma_0(p)=\left\{
\begin{pmatrix}a&b\\c&d\end{pmatrix}\in\operatorname{SL}_2(\mathbb Z):c\equiv0\pmod p
\right\}.
$$
A matrix fixes $\infty$ modulo $p$ exactly when $c\equiv0\pmod p$, so $H_p$ is the stabilizer of $\infty$ under this action. The action is transitive, since $S\infty=0$ and $T^m0=m\bmod p$ for $0\leq m<p$. Hence $gH_p\mapsto g\infty$ is an equivariant bijection from $\Gamma/H_p$ to $\Omega$, and $H_p$ contains the image of the principal congruence subgroup of level $p$.

For the permutation representation $\pi(g)f(z)=f(g^{-1}z)$ on $\Omega$, we have
$$
3I-\pi(T)-\pi(T^{-1})-\pi(S)=\Delta,
$$
where the three generator contributions are counted separately even when their images coincide. Thus $\Delta$ is the unnormalized Laplacian in~\cite[Definition~4.2.1 and Proposition~4.2.2]{Lubotzky} for the generating set $\{T,T^{-1},S\}$.

The Schreier multigraph $\Gamma_p$ coincides with the graph in~\cite[Theorem~4.4.2(ii), p.~53]{Lubotzky}, which uses precisely $z\mapsto z+1$, $z\mapsto z-1$, and $z\mapsto-z^{-1}$, and the paragraph following its proof gives $0.003$ as a lower bound for the first positive eigenvalue of this unnormalized Laplacian. Transitivity implies that $\ker\Delta$ consists of the constant functions, giving the claimed inequality on their orthogonal complement.
\end{proof}

\begin{lemma}\label{lem:parameter-energy}
Let $p$ be an odd prime. If $Z\subseteq\Omega$ is invariant under $\sigma$ and $\theta$, then its indicator function $\ind_Z$ satisfies
$$
\left\langle\Delta\ind_Z,\ind_Z\right\rangle=\frac32\left|Z\mathbin{\triangle}(-Z)\right|.
$$
\end{lemma}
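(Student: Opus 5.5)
We expand the quadratic form directly. For any $f$ on $\Omega$, $\langle\Delta f,f\rangle$ splits into the contribution of the translation generators and that of $S:a\mapsto-a^{-1}$. Since the permutation $a\mapsto a+1$ has inverse $a\mapsto a-1$, and $S$ is an involution, the standard edge-sum identity for a Schreier graph with an inverse-closed generating set gives
$$
\langle\Delta f,f\rangle=\sum_{a\in\Omega}\bigl(f(a+1)-f(a)\bigr)^2+\frac12\sum_{a\in\Omega}\bigl(f(-a^{-1})-f(a)\bigr)^2 .
$$
To check this, write $3\langle f,f\rangle=\sum_a f(a)^2\cdot 3$, and observe that $\sum_a f(a)f(a+1)=\sum_a f(a)f(a-1)$. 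Each of the first two generators then produces $\frac12\sum_a(f(a+1)-f(a))^2$, while $S$ produces $\frac12\sum_a(f(Sa)-f(a))^2$. We apply this with $f=\ind_Z$, so each squared difference is just an indicator of a boundary crossing.

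The key step is to rewrite both sums using the invariance of $Z$. Invariance under $\theta$ gives $\ind_Z(a+1)=\ind_Z(-a)$ for all $a\in\Omega$: we have $a+1=1-(-a)=\theta(-a)$, which also holds at $\infty$ with the stated conventions. Hence the first sum equals $\sum_a(\ind_Z(-a)-\ind_Z(a))^2=|Z\mathbin{\triangle}(-Z)|$, exactly as in the proof of Lemma~\ref{lem:two-interval-form}, but now over $\Omega$. Invariance under $\sigma$ gives $\ind_Z(-a^{-1})=\ind_Z(\sigma(-a^{-1}))=\ind_Z(-a)$, using $(-a^{-1})^{-1}=-a$, which again holds at $0$ and $\infty$. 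So the second sum also equals $|Z\mathbin{\triangle}(-Z)|$, and with the factor $\frac12$ the total is $\frac32|Z\mathbin{\triangle}(-Z)|$.

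The only point needing care is the bookkeeping of the Laplacian expansion with multiplicities. This concerns the factor $\frac12$ for the involution $S$ versus the full weight for the pair $T,T^{\pm1}$, together with the verification that the extended maps on $\infty$ are consistent: $-\infty=\infty$, $1-\infty=\infty$, and $\infty^{-1}=0$. These are routine checks, so no real obstacle is expected.
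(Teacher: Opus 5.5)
Your proof is correct and is essentially the paper's argument. Both rest on the identities $\ind_Z(a+1)=\ind_Z(-a)$ and $\ind_Z(-a^{-1})=\ind_Z(-a)$, which follow from $\theta$- and $\sigma$-invariance and hold at $0$ and $\infty$; the only difference is bookkeeping, since you write $\langle\Delta f,f\rangle$ as a sum of squared differences each reducing to $|Z\mathbin{\triangle}(-Z)|$, whereas the paper shows that the three cross-correlation sums all equal $|Z\cap(-Z)|$ and then uses $3|Z|-3|Z\cap(-Z)|=\frac32|Z\mathbin{\triangle}(-Z)|$.
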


\begin{proof}
For $f:=\ind_Z$, we have the identities $a+1=\theta(-a)$, $a-1=-\theta(a)$, and $-a^{-1}=-\sigma(a)$ on $\Omega$. Since $f$ is invariant under $\sigma$ and $\theta$, we have $f(a+1)=f(\theta(-a))=f(-a)$, and the changes of variable $b=\theta(a)$ and $b=\sigma(a)$ give
$$
\sum_{a\in\Omega}f(a)f(a+1)=\sum_{a\in\Omega}f(a)f(a-1)=\sum_{a\in\Omega}f(a)f(-a^{-1})=\sum_{a\in\Omega}f(a)f(-a)=|Z\cap(-Z)|.
$$
Therefore, $\left\langle\Delta f,f\right\rangle=3|Z|-3|Z\cap(-Z)|=\frac32\left|Z\mathbin{\triangle}(-Z)\right|$.
\end{proof}

\begin{lemma}\label{lem:spectral-cutoff}
Let $p$ be an odd prime. If $Z\subseteq\Omega$ is invariant under $\sigma$ and $\theta$, then
$$
0.003\,|Z|\left(1-\frac{|Z|}{p+1}\right)\leq\frac32\left|Z\mathbin{\triangle}(-Z)\right|.
$$
Consequently, $\min\{|Z|,p+1-|Z|\}\leq1000|Z\mathbin{\triangle}(-Z)|$. In particular, if $|Z\mathbin{\triangle}(-Z)|=4$, then $\min\{|Z|,p+1-|Z|\}\leq4000$.
\end{lemma}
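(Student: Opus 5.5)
Apply Lemma~\ref{lem:projective-spectral-gap} to the mean-zero projection of the indicator, and evaluate the energy with Lemma~\ref{lem:parameter-energy}.

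Let $f:=\ind_Z$ and put $N:=p+1=|\Omega|$. Set
$$
g:=f-\frac{|Z|}{N}\,\mathbf 1,
$$
where $\mathbf 1$ is the constant function. Then $\sum_{z\in\Omega}g(z)=0$. Since $\Delta$ annihilates constants and is self-adjoint, we have $\Delta g=\Delta f$ and $\langle\Delta g,g\rangle=\langle\Delta f,f\rangle$. By Lemma~\ref{lem:parameter-energy}, this common value equals $\frac32|Z\mathbin{\triangle}(-Z)|$. The mass of $g$ is
$$
\langle g,g\rangle=\langle f,f\rangle-\frac{|Z|^2}{N}=|Z|\Bigl(1-\frac{|Z|}{p+1}\Bigr).
$$
Lemma~\ref{lem:projective-spectral-gap} then gives $0.003\,\langle g,g\rangle\le\langle\Delta g,g\rangle$, which is the displayed inequality.

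For the consequence, write $m:=\min\{|Z|,N-|Z|\}$. Since $|Z|(N-|Z|)/N$ is symmetric under $|Z|\mapsto N-|Z|$ and the larger factor is at least $N/2$, we get
$$
|Z|\Bigl(1-\frac{|Z|}{N}\Bigr)=\frac{|Z|(N-|Z|)}{N}\ge\frac{m}{2}.
$$
Combining with the first inequality yields $0.0015\,m\le1.5\,|Z\mathbin{\triangle}(-Z)|$, that is, $m\le1000\,|Z\mathbin{\triangle}(-Z)|$. Substituting $|Z\mathbin{\triangle}(-Z)|=4$ gives the bound $4000$.

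No step is a real obstacle, since the spectral gap is imported from Lemma~\ref{lem:projective-spectral-gap}. The only point needing care is that $\Delta\mathbf 1=0$ holds on the whole of $\Omega$, including $\infty$. This is true because the three generator maps are permutations of $\Omega$, so each contributes exactly one term to every row of $\Delta$.
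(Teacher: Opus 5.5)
Your proposal is correct and follows essentially the same route as the paper. Both arguments center $\ind_Z$ by subtracting its mean, use $\Delta\ind_\Omega=0$ to carry the energy identity of Lemma~\ref{lem:parameter-energy} over to the centered function, apply the spectral gap of Lemma~\ref{lem:projective-spectral-gap}, and finish with $|Z|(p+1-|Z|)/(p+1)\ge\frac12\min\{|Z|,p+1-|Z|\}$; your explicit justification of $\langle\Delta g,g\rangle=\langle\Delta f,f\rangle$ via self-adjointness is a detail the paper leaves implicit.
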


\begin{proof}
Let $f:=\ind_Z$ and set $f_0:=f-\frac{|Z|}{p+1}\ind_\Omega$. Then $\sum_{a\in\Omega}f_0(a)=0$, $\Delta\ind_\Omega=0$, and $\langle f_0,f_0\rangle = |Z|\bigl(1-\frac{|Z|}{p+1}\bigr)$. Lemmas~\ref{lem:projective-spectral-gap} and~\ref{lem:parameter-energy} give
$$
0.003\,\langle f_0,f_0\rangle\leq\langle\Delta f_0,f_0\rangle=\langle\Delta f,f\rangle=\frac32\left|Z\mathbin{\triangle}(-Z)\right|,
$$
proving the first inequality. Putting $u:=\min\{|Z|,p+1-|Z|\}\le(p+1)/2$, we have $|Z|\bigl(1-\frac{|Z|}{p+1}\bigr)=\frac{u(p+1-u)}{p+1}\ge\frac u2$, which gives the remaining assertions.
\end{proof}

We now return to a prime $p>5$ with $p\equiv5\pmod{12}$ and the bridge component $L$ of $G_p-e$ in Lemma~\ref{lem:bridge-symmetric-difference}. Since $|V(G_p)|=(p-5)/6$ and both components of $G_p-e$ have odd order, the number of vertices in the other component is a positive odd integer $d:=(p-5)/6-|L|$.

\begin{corollary}\label{cor:p5-component-cutoff}
Under the hypotheses of Lemma~\ref{lem:bridge-symmetric-difference}, either $|L|\le665$ or $d\le665$.
\end{corollary}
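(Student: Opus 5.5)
We apply Lemma~\ref{lem:spectral-cutoff} to the parameter set $Y$ of Lemma~\ref{lem:bridge-symmetric-difference}, regarded as a subset of $\Omega=\mathbb P^1(\mathbb F_p)$. Since $Y\subseteq X_p\subseteq\mathbb F_p$, we have $\infty\notin Y$. The invariance of $Y$ under $\sigma$ and $\theta$ on $\mathbb F_p\setminus\{0,1\}$ persists on $\Omega$, because $\sigma$ and $\theta$ permute $\{0,1,\infty\}$ among themselves and $Y$ avoids these points. Negation on $\Omega$ fixes $\infty$, so $Y\mathbin{\triangle}(-Y)$ computed in $\Omega$ is the same as in $\mathbb F_p$. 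Lemma~\ref{lem:bridge-symmetric-difference} gives $|Y\mathbin{\triangle}(-Y)|=4$, and Lemma~\ref{lem:spectral-cutoff} then yields
$$
\min\{|Y|,\,p+1-|Y|\}\le4000 .
$$

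If the minimum is $|Y|=6|L|$, then $|L|\le 4000/6<667$, and since $|L|$ is odd, $|L|\le665$. This is the easy half.

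If instead $p+1-|Y|\le4000$, we must translate this into a bound on $d$. The complement $\Omega\setminus Y$ consists of $\infty$, the five points $0,\pm1,2,2^{-1}$ of $\mathbb F_p\setminus X_p$, and the union of the sets $H(a)$ over the vertices $\mathcal T(a)$ in the other component of $G_p-e$. There are $d$ such vertices, and each $H(a)$ has size $6$ by Lemma~\ref{lem:p5-free}, with the $H(a)$ for distinct vertices disjoint by Lemma~\ref{lem:affine-triple-orbits}. Thus
$$
p+1-|Y|=6+6d=p+1-6|L|,
$$
consistent with $|V(G_p)|=(p-5)/6$. Hence $6d\le3994$, so $d\le665$ (indeed $d\le665.67$, and $d$ is an integer).

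The only point requiring care is this complement count: checking that the extension to $\Omega$ preserves the invariance hypotheses, and that exactly six points of $\Omega$ lie outside $\bigcup_{a\in X_p}H(a)$. The latter holds because $X_p$ is the disjoint union of the sets $H(a)$ over all vertices of $G_p$.
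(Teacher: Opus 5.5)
Your proposal is correct and follows essentially the same route as the paper. You apply Lemma~\ref{lem:spectral-cutoff} to $Y\subseteq\Omega$, compute the complement as $p+1-|Y|=6(d+1)$ (the six points $\infty,0,\pm1,2,2^{-1}$ plus the $6d$ parameters of the other component), and use the oddness of $|L|$ to sharpen $|L|\le666$ to $|L|\le665$.
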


\begin{proof}
By Lemma~\ref{lem:bridge-symmetric-difference}, the set $Y=\bigcup_{\mathcal T(a)\in L}H(a)\subseteq X_p$ is invariant under $\sigma$ and $\theta$ on $\Omega$, with $|Y|=6|L|$ and $|Y\mathbin{\triangle}(-Y)|=4$. Its complement $\Omega\setminus Y$ consists of the six elements of $\Omega\setminus X_p=\{\infty,0,1,-1,2,2^{-1}\}$ together with the $6d$ parameters in $X_p$ corresponding to the $d$ vertices in the other component of $G_p-e$. Hence $p+1-|Y|=6(d+1)$. Applying Lemma~\ref{lem:spectral-cutoff} to $Z=Y$ gives $\min\{6|L|,6(d+1)\}\leq4000$, so $|L|\leq666$ or $d\leq665$. Since $|L|$ is odd, the first bound improves to $|L|\leq665$.
\end{proof}

Setting $\ell:=3|L|$, we see from the fact that $|L|$ and $d$ are positive odd integers that $\ell\equiv3\pmod6$ and $p=2\ell+6d+5\ge2\ell+11$. By Lemma~\ref{lem:bridge-symmetric-difference}, $Y\cup(-Y)$ contains neither the roots $\pm\mathrm{i}$ of $x^2=-1$ nor (when they exist) the roots of $x^2+x-1=0$. Hence Lemma~\ref{lem:bridge-congruences} applies to $Z=Y$ and shows that $p$ divides both $\Lambda_\ell-\ell F_\ell$ and $\Pi_\ell^2-1$.

\begin{lemma}\label{lem:p5-certificate-small}
For every $\ell\in\{3,9,\ldots,1995\}$, there is no prime $p\equiv5\pmod{12}$ with $p\ge2\ell+11$ that divides both $\Lambda_\ell-\ell F_\ell$ and $\Pi_\ell^2-1$.
\end{lemma}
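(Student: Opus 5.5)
This is a finite verification, and I would carry it out by exact integer computation for each $\ell$ in the list $\ell=3,9,15,\dots,1995$ (that is, $\ell\equiv3\pmod 6$). The point is that $p$ would have to divide the greatest common divisor
$$
g_\ell:=\gcd\bigl(\Lambda_\ell-\ell F_\ell,\ \Pi_\ell^2-1\bigr).
$$
So it suffices to compute $g_\ell$ and show that it has no prime factor $p\equiv5\pmod{12}$ with $p\ge2\ell+11$.

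First, for each $\ell$, I compute $N_\ell:=\Lambda_\ell-\ell F_\ell$ exactly. Its absolute value has roughly $0.21\ell$ decimal digits, so it stays manageable. I check that $N_\ell\neq0$, which holds for $\ell\ge3$ since $\ell F_\ell$ grows faster than $\Lambda_\ell$.

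Second, I avoid computing $\Pi_\ell$ in full, since it has about $\ell\log\ell$ digits. Instead I reduce modulo $|N_\ell|$: compute $\Pi_\ell\bmod N_\ell$ as a running product of the factors $r(\ell-r)-1$ for $1\le r\le(\ell-1)/2$, then form $g_\ell=\gcd\bigl(N_\ell,(\Pi_\ell\bmod N_\ell)^2-1\bigr)$. This uses only about $\ell/2$ modular multiplications of numbers of a few hundred digits, which is trivial for every $\ell\le1995$.

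Third, I strip from $g_\ell$ all prime factors below $2\ell+11$ by trial division, leaving a cofactor $g'_\ell$. In most cases I expect $g'_\ell=1$ to be immediate. A heuristic supports this: $N_\ell$ and $\Pi_\ell^2-1$ are essentially unrelated, so any large common prime would be a coincidence. One helpful structural fact is that every prime divisor of $\Pi_\ell^2-1$ that divides some factor $r(\ell-r)-1$ appears in the product and cannot divide $\Pi_\ell^2-1$. More usefully, $N_\ell$ has only about $0.21\ell$ digits, so its large prime factors are few.

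Fourth, if some $g'_\ell>1$ survives, I factor it, or at least certify that each of its prime factors avoids the class $5\pmod{12}$. When $g'_\ell$ is small this is direct factorization. Otherwise I would pass to a refined check: for a candidate prime $p\equiv5\pmod{12}$, test directly whether the two-interval configuration of Lemma~\ref{lem:two-interval-form} exists. That means asking whether $\xi^2+\ell\xi+1=0$ has a root in $\mathbb F_p$ for which $Z$ is inversion-invariant and the intervals $I_\xi$ and $-I_\xi$ are disjoint. This gives a secondary certificate that excludes the case.

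I expect the main obstacle to be the completeness of the computation rather than any theoretical step. There may be a few values of $\ell$ where $g_\ell$ has a large cofactor that is hard to factor, and the fallback refined test for those specific $(\ell,p)$ pairs is what I would rely on. The write-up would present the result as a computer certificate, listing the surviving $\ell$, if any, together with how each was dismissed.
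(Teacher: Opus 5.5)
Your plan is correct and is essentially the paper's proof: the paper also computes $\gcd(\Pi_\ell^2-1,|\Lambda_\ell-\ell F_\ell|)$ for each of the $333$ values of $\ell$ and factors it completely, finding no prime $p\equiv5\pmod{12}$ with $p\ge2\ell+11$; reducing $\Pi_\ell$ modulo $|\Lambda_\ell-\ell F_\ell|$ is just an efficient way to obtain the same gcd. One caveat is that your fallback test for a two-interval configuration would only establish the downstream contradiction, not the lemma as stated, but the fallback is never needed because the complete factorizations go through.
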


\begin{proof}
For each of the $333$ values $\ell\in\{3,9,\ldots,1995\}$, complete factorization of $\gcd(\Pi_\ell^2-1,|\Lambda_\ell-\ell F_\ell|)$ yields no prime factor $p\equiv5\pmod{12}$ satisfying $p\ge2\ell+11$; see~\cite{LiuFengZenodo}.
\end{proof}

Suppose now that $d\leq665$. Using the integer representatives $0,1,\ldots,p-1$, choose $I_\xi$ or $-I_\xi$ from Lemma~\ref{lem:two-interval-form} so that its least element $u$ is the least element of $I_\xi\cup(-I_\xi)$. Then the two intervals are $\{u,u+1,\ldots,u+\ell\}$ and $\{p-u-\ell,\ldots,p-u\}$, with $u+\ell<p-u-\ell$. Since $1\notin Y\cup(-Y)$, we have $u\geq2$, and $p=2\ell+6d+5$ gives $u\leq3d+2$. By Lemma~\ref{lem:two-interval-form}, $u(u+\ell)\equiv-1\pmod p$; substituting $2\ell=p-6d-5$ yields
$$
p\mid N_{d,u}:=(6d+5)u-2u^2-2.
$$
If $u=3d+2$, then $N_{d,u}=3d$, which is impossible since $0<3d<p$. Hence $2\leq u\leq3d+1$. On this interval, the concave quadratic $N_{d,u}$ takes endpoint values $12d$ and $9d+1$, and completing the square for $d\leq665$ gives $0<N_{d,u}\leq1\,995\,001$.

\begin{lemma}\label{lem:p5-certificate-complement}
Let $d$ and $u$ be integers with $d$ odd, $1\le d\le665$, and $2\le u\le3d+1$, and let $p\equiv5\pmod{12}$ be a prime divisor of $N_{d,u}=(6d+5)u-2u^2-2$. If $\ell:=(p-6d-5)/2$ satisfies $\ell>0$ and $\ell\equiv3\pmod6$, then $\Lambda_\ell-\ell F_\ell\not\equiv0\pmod p$ or $\Pi_\ell^2\not\equiv1\pmod p$.
\end{lemma}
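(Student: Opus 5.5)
This is a finite verification, and I would prove it by exhaustive computer check, in the same spirit as Lemma~\ref{lem:p5-certificate-small}. The enumeration is over all pairs $(d,u)$ with $d$ odd, $1\le d\le665$, and $2\le u\le 3d+1$. That gives roughly $\sum_{d\text{ odd}\le665}3d\approx 3\cdot 333^2\approx 3.3\times10^5$ pairs.

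For each pair, compute $N_{d,u}=(6d+5)u-2u^2-2$. By the discussion preceding the lemma, $0<N_{d,u}\le1\,995\,001$, so complete factorization is trivial. A sieve of smallest prime factors up to $2\times10^6$ suffices. Extract the prime divisors $p$ with $p\equiv5\pmod{12}$. For each such $p$, compute $\ell=(p-6d-5)/2$ and discard $p$ unless $\ell>0$ and $\ell\equiv3\pmod6$. The remaining triples $(d,u,p)$ are the candidates to be tested.

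For each candidate, work entirely in $\mathbb F_p$ and never with the huge integers $\Lambda_\ell$, $F_\ell$, $\Pi_\ell$. Compute $F_\ell\bmod p$ and $F_{\ell\pm1}\bmod p$ by fast doubling, or by the power of the matrix $\begin{pmatrix}1&1\\1&0\end{pmatrix}$. Then set $\Lambda_\ell=F_{\ell-1}+F_{\ell+1}$ and test whether $\Lambda_\ell-\ell F_\ell\equiv0$. Only if that holds, compute $\Pi_\ell\bmod p$ as the product $\prod_{r=1}^{(\ell-1)/2}(r(\ell-r)-1)$, which takes at most about $p/4\le 5\times10^5$ multiplications, and test whether $\Pi_\ell^2\equiv1$. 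The lemma asserts that no candidate passes both tests, so the proof is the record of this computation, deposited in~\cite{LiuFengZenodo} alongside the certificate for Lemma~\ref{lem:p5-certificate-small}.

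The main obstacle is not conceptual. It is keeping the computation reliable and cheap, and the dangerous part is the $\Pi_\ell$ product over many candidate $p$. I would order the tests so the cheap Fibonacci/Lucas test filters first; a heuristic density of about $1/p$ for passing should leave very few survivors. I would also cache the products by $p$, since distinct $(d,u)$ pairs can give the same $p$ and hence the same $\ell$ only if $d$ also agrees.

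A heuristic check that the outcome is plausible is that a random residue vanishes with probability about $1/p$. Summing over candidates gives an expected number of survivors of order $\sum 1/p$ over the candidate primes, which is small. Independence of the two conditions makes joint survivors even less likely.

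If some triple did survive, I would examine it directly: check whether $Y$ can really be realized by the interval $\{u,\dots,u+\ell\}$ with the invariance under $\sigma,\theta$. Such a survivor would then have to be excluded by a further structural condition, for instance verifying that the interval set is not closed under $t\mapsto1-t$ and $t\mapsto t^{-1}$ modulo $p$, which is again a finite check.
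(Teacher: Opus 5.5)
Your proposal is correct and follows the paper's argument: enumerate the pairs $(d,u)$, factor $N_{d,u}\le 1\,995\,001$, keep the prime divisors $p\equiv5\pmod{12}$ with $\ell=(p-6d-5)/2>0$ and $\ell\equiv3\pmod6$, test $\Lambda_\ell-\ell F_\ell\equiv0\pmod p$ modulo $p$, and evaluate $\Pi_\ell^2\bmod p$ only for the survivors. The paper's computation retains $42\,417$ distinct triples $(p,\ell,d)$, of which only $(6089,1269,591)$ and $(10\,601,3873,475)$ pass the Lucas--Fibonacci test, and both have $\Pi_\ell^2\not\equiv1\pmod p$; so your contingency paragraph is never needed, and it could not have saved the lemma anyway, since the lemma is a purely arithmetic statement.
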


\begin{proof}
We enumerate all pairs $(d,u)$ with $d\in\{1,3,\ldots,665\}$ and $2\le u\le3d+1$, and completely factor $N_{d,u}$. For each prime divisor $p\equiv5\pmod{12}$, we set $\ell=(p-6d-5)/2$ and retain $(p,\ell,d)$ when $\ell>0$ and $\ell\equiv3\pmod6$; here $\ell$ is determined by $p$ and $d$, rather than restricted to $\ell\le1995$. Recording each distinct triple $(p,\ell,d)$ once yields $42\,417$ triples.

Computing $F_\ell$ and $\Lambda_\ell$ modulo $p$ shows that only two retained triples satisfy $\Lambda_\ell-\ell F_\ell\equiv0\pmod p$:
$$
\begin{array}{r|r|r|r}
p & \ell & d & \Pi_\ell^2 \bmod p\\
\hline
6089 & 1269 & 591 & 5834\\
10\,601 & 3873 & 475 & 1929
\end{array}
$$
In both cases $\Pi_\ell^2\not\equiv1\pmod p$; see~\cite{LiuFengZenodo}.
\end{proof}

By Corollary~\ref{cor:p5-component-cutoff}, either $|L|\le665$ or $d\le665$, and Lemmas~\ref{lem:p5-certificate-small} and~\ref{lem:p5-certificate-complement} contradict Lemma~\ref{lem:bridge-congruences} in both cases. Thus $G_p$ satisfies Condition~\ref{cond:bridge}. By Propositions~\ref{prop:p5-matching-reduction} and~\ref{prop:tutte-reduction} (together with $A^*(5)=\varnothing$), an auxiliary design $A^*(p)$ exists for every prime $p\equiv5\pmod{12}$.

\section{The case \texorpdfstring{$p\equiv1\pmod{12}$}{p = 1 (mod 12)}}\label{sec:p1}

Throughout this section, let $p$ be a prime with $p\equiv1\pmod{12}$, and write $M_p:=\mathbb F_p^\times/\{\pm1\}$. By Lemma~\ref{lem:multiplier-quotient}, the $M_p$-orbits on the vertices and edges of the full K\"ohler graph $\mathcal K_p$ correspond to affine orbits of irregular triangles and admissible trapezoids, respectively. Unlike the case $p\equiv5\pmod{12}$, the action of $M_p$ on $V(\mathcal K_p)$ has one orbit whose vertex stabilizers have order $3$.

\subsection{Affine orbit multigraph}\label{subsec:p1-orbit}

Since $p\equiv1\pmod{12}$, the equations $a^2-a+1=0$ and $a^2=-1$ each have two roots in $\mathbb F_p$. Throughout this section, we fix roots $\zeta,\mathrm{i}\in\mathbb F_p$ satisfying $\zeta^2-\zeta+1=0$ and $\mathrm{i}^2=-1$. Whenever $x^2+x-1=0$ has a solution in $\mathbb F_p$, we also fix one of its roots and denote it by $\rho$.

By Lemma~\ref{lem:size-of-Ha}, $H(\zeta)=\{\zeta,\zeta^{-1}\}$, where $\zeta^{-1}=1-\zeta$. Thus $\mathcal T(\zeta)$ is the unique affine orbit of irregular triangles represented by only two normalized parameters; every other affine orbit of irregular triangles is represented by six parameters.

The setwise affine stabilizer of $\{0,1,\zeta\}$ is
$$
\operatorname{Stab}_{\operatorname{Aff}(\mathbb F_p)}(\{0,1,\zeta\})
=\bigl\{x\mapsto x,\quad x\mapsto1+(\zeta-1)x,\quad x\mapsto\zeta-\zeta x\bigr\}.
$$
Using $\zeta^2=\zeta-1$, the last two maps induce the cycles $(0\,1\,\zeta)$ and $(0\,\zeta\,1)$ on the triple. An affine map is determined by its values at two distinct points, so the stabilizer acts faithfully on these three points. Any further element would therefore induce a transposition. An affine map exchanging two points and fixing the third has the form $x\mapsto-x+c$, and its fixed point is the average of the two exchanged points in $\mathbb F_p$, which would make $\{0,1,\zeta\}$ an isosceles triangle. Hence the stabilizer has order $3$ and $|\mathcal T(\zeta)|=p(p-1)/3$, whereas every other affine orbit of irregular triangles has size $p(p-1)$. Since each vertex $\mathcal T_{\pm}(T)$ of $\mathcal K_p$ contains $2p$ triples, $\mathcal T(\zeta)$ is the union of $(p-1)/6$ vertices of $\mathcal K_p$, whereas every other affine orbit of irregular triangles is the union of $(p-1)/2$ vertices.

Selecting all blocks in the affine trapezoid orbit joining $\mathcal T(\zeta)$ and $\mathcal T(-\zeta)$ would cover each triple in $\mathcal T(\zeta)$ three times, as shown in Lemma~\ref{lem:p1-branch-lift} below. Thus a $1$-factor of $\mathcal K_p$ cannot be obtained by selecting entire affine trapezoid orbits incident with $\mathcal T(\zeta)$.

Define
$$
X_p:=\mathbb F_p\setminus\{0,1,-1,2,2^{-1}\},\qquad
X_p^*:=X_p\setminus\{\zeta,\zeta^{-1}\}.
$$
Thus $a\in X_p^*$ if and only if $\{0,1,a\}$ is irregular and $\mathcal T(a)\neq\mathcal T(\zeta)$.

\begin{definition}\label{def:p1-orbit-graph}
Let $p$ be a prime with $p\equiv1\pmod{12}$. The \emph{affine orbit multigraph} $G_p^*$ is the loopless multigraph with vertex set $V(G_p^*)=\{\mathcal T(a):a\in X_p^*\}$. Its edges are the distinct affine trapezoid orbits $\mathcal P(a)$, with $a\in X_p^*$, for which all four triples of $P(a)$ are irregular, $\mathcal T(-a)\in V(G_p^*)$, and $\mathcal T(a)\neq\mathcal T(-a)$. The edge $\mathcal P(a)$ joins $\mathcal T(a)$ and $\mathcal T(-a)$. Distinct affine trapezoid orbits are retained as distinct edges when they have the same endpoints.
\end{definition}

The total number of irregular triangles in $\mathbb F_p$ is $p(p-1)(p-5)/6$. Removing the exceptional orbit $\mathcal T(\zeta)$ of size $p(p-1)/3$ and dividing by $p(p-1)$ gives $|V(G_p^*)|=(p-7)/6$, which is odd.

As in Section~\ref{sec:p5}, the three inverse pairs in $H(a)$ describe the possible incidences at a vertex $\mathcal T(a)$ of $G_p^*$. The inverse pairs that fail to give edges of $G_p^*$ are as follows:
\begin{itemize}
\item The inverse pair $\{-2,-2^{-1}\}\subseteq H(-2)$ leads to the affine orbit of isosceles triangles and gives no admissible edge.
\item The inverse pair $\{\mathrm{i},-\mathrm{i}\}\subseteq H(\mathrm{i})$ gives an affine trapezoid orbit with $\mathcal T(\mathrm{i})$ at both ends, which is omitted as a loop.
\item The inverse pair $\{-\zeta,-\zeta^{-1}\}\subseteq H(-\zeta)$ has negative $H(\zeta)$, so the corresponding affine trapezoid orbit joins $\mathcal T(-\zeta)$ to the removed vertex $\mathcal T(\zeta)$.
\item When $x^2+x-1=0$ has a root $\rho\in\mathbb F_p$, the roots of $x^2+x-1=0$ and $x^2-x-1=0$ lie in $H(\rho)$ and represent the same vertex $\mathcal T(\rho)$. Since $\rho^{-1}=\rho+1$, the three inverse pairs in $H(\rho)$ are $\{\rho,\rho+1\}$, $\{-\rho,-\rho-1\}$, and $\{1-\rho,\rho+2\}$. The first two are negatives of one another and determine one omitted loop at $\mathcal T(\rho)$. The negative of the third pair is $\{\rho-1,-\rho-2\}$ with $(\rho-1)^{-1}=-\rho-2$, so the unique edge of $G_p^*$ incident with $\mathcal T(\rho)$ joins it to $\mathcal T(\rho-1)$.
\end{itemize}

\begin{lemma}\label{lem:p1-degrees}
Let $p>13$ be a prime with $p\equiv1\pmod{12}$. If $x^2+x-1=0$ has no solution in $\mathbb F_p$, then $\mathcal T(-2)$, $\mathcal T(\mathrm{i})$, and $\mathcal T(-\zeta)$ are pairwise distinct, each has degree $2$ in $G_p^*$, and every other vertex has degree $3$. If $x^2+x-1=0$ has a root $\rho\in\mathbb F_p$, then $\mathcal T(-2)$, $\mathcal T(\mathrm{i})$, $\mathcal T(-\zeta)$, $\mathcal T(\rho)$, and $\mathcal T(\rho-1)$ are pairwise distinct; in this case, the first three vertices have degree $2$, $\mathcal T(\rho)$ has degree $1$, and every other vertex (in particular, $\mathcal T(\rho-1)$) has degree $3$.
\end{lemma}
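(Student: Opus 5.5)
I will follow the proof of Lemma~\ref{lem:degrees-Gp}, counting at each vertex $\mathcal T(a)$ of $G_p^*$ which of its three inverse pairs in $H(a)$ give edges. For $a\in X_p^*$, Lemma~\ref{lem:size-of-Ha} gives $|H(a)|=6$. So $H(a)$ splits into the three inverse pairs $\{a,a^{-1}\}$, $\{1-a,(1-a)^{-1}\}$, $\{(a-1)/a,a/(a-1)\}$. By Lemma~\ref{lem:affine-trapezoid-orbits}, distinct inverse pairs give distinct affine trapezoid orbits. By the discussion preceding Definition~\ref{def:p1-orbit-graph}, the pair $\{u,u^{-1}\}$ fails to give an edge of $G_p^*$ exactly in three cases.
\begin{enumerate}
\item $\mathcal T(-u)$ is the isosceles orbit, which forces $u\in\{-2,-2^{-1}\}$.
\item $\mathcal T(-u)=\mathcal T(u)$, which by Lemma~\ref{lem:trapezoid-loop} forces $u^2=-1$ or $u^2\pm u-1=0$.
\item $\mathcal T(-u)=\mathcal T(\zeta)$, which forces $-u\in\{\zeta,\zeta^{-1}\}$.
\end{enumerate}
The bulleted list already identifies, for each case, the unique vertex and pair involved: $\{-2,-2^{-1}\}$ at $\mathcal T(-2)$, $\{\pm\mathrm{i}\}$ at $\mathcal T(\mathrm{i})$, $\{-\zeta,-\zeta^{-1}\}$ at $\mathcal T(-\zeta)$, and the two mutually negative pairs at $\mathcal T(\rho)$ forming one loop. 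The vertex $\mathcal T(\rho)$ retains only the edge to $\mathcal T(\rho-1)$. Each of the four special vertices thus loses exactly the stated number of pairs (one, one, one, two), and every other vertex keeps degree $3$. This degree count holds provided the special vertices are distinct and actually lie in $G_p^*$. Membership is immediate once $-2,\mathrm{i},-\zeta,\rho\in X_p^*$, which holds for $p>13$ because each exclusion reduces to a small resultant. For example, $-\zeta=2$ or $2^{-1}$ forces $p\mid 7$ or $p\mid 3$.

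The step I expect to be the main obstacle is pairwise distinctness, which I would settle by finite resultant computations, as in Lemma~\ref{lem:degrees-Gp}. For each pair of special vertices $\mathcal T(x),\mathcal T(y)$, with $x,y$ among $-2,\mathrm{i},-\zeta,\rho,\rho-1$, equality means $y\in H(x)$ by Lemma~\ref{lem:affine-triple-orbits}. Each of the six elements $h$ of $H(x)$ is a rational function of $x$. Substituting $h$ into the minimal polynomial of $y$ and taking the resultant with the minimal polynomial of $x$ (or evaluating directly when $x=-2$) gives an integer. For the admissible case we need that $p$ divides it only for $p\le13$ or $p\not\equiv1\pmod{12}$. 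For instance, $\mathcal T(-2)=\mathcal T(\mathrm{i})$ requires $h^2=-1$ for some $h\in\{-2,-2^{-1},3,3^{-1},3/2,2/3\}$, giving $p\in\{5,2,13\}$. For $\mathcal T(-2)=\mathcal T(-\zeta)$ we need $h^2+h+1=0$, giving $p\in\{3,7,13,19\}$, and $19\not\equiv1\pmod{12}$. Most of these checks are mechanical.

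The pairs involving $\rho$ need slightly more care, since $\rho$ and $\mathrm{i}$ (or $\zeta$) need not generate a common small field. Even so, I compute the resultant of $x^2+x-1$ and $h(y)$-conditions over $\mathbb Z[y]/(y^2+1)$ or $(y^2-y+1)$, reducing to norms of algebraic integers in $\mathbb Q(\sqrt5,\mathrm i)$ or $\mathbb Q(\sqrt5,\sqrt{-3})$. I would then check that every prime divisor is at most $13$ or is not $\equiv1\pmod{12}$. The distinction of $\mathcal T(\rho-1)$ from $\mathcal T(\rho)$ is free, since the edge joining them is not a loop by Lemma~\ref{lem:trapezoid-loop}. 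The distinction of $\mathcal T(\rho-1)$ from the degree-$2$ vertices follows by the same norm computation, which is needed so that $\mathcal T(\rho-1)$ indeed has degree $3$.
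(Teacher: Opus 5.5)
Your proposal is correct and follows essentially the same route as the paper. Degrees come from the inverse-pair discussion preceding Definition~\ref{def:p1-orbit-graph}, once the special vertices are known to be distinct. Distinctness is settled by the integer resultants $\operatorname{Res}_x(f_i,D^2f_j(N/D))$ over the six maps defining $H$; the paper tabulates all ten pairs, and $13$ is the only prime divisor congruent to $1\pmod{12}$.

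Two small inaccuracies are harmless. Your sample list for $\mathcal T(-2)=\mathcal T(-\zeta)$ contains a spurious $7$ (the correct primes are $3,13,19$), but over-listing exceptional primes does no damage. The ``extra care'' you anticipate for pairs involving $\rho$ is unnecessary, since those cases are the same univariate resultants.
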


\begin{proof}
Lemmas~\ref{lem:affine-trapezoid-orbits}, \ref{lem:trapezoid-admissible}, and~\ref{lem:trapezoid-loop}, together with the preceding discussion of inverse pairs, yield the stated degrees provided that the listed vertices are pairwise distinct.

The parameters $-2$, $\mathrm{i}$, $-\zeta$ and, when $\rho$ exists, $\rho$ and $\rho-1$ are roots of the polynomials
$$
f_1=x+2,\quad f_2=x^2+1,\quad f_3=x^2+x+1,
\quad f_4=x^2+x-1,\quad f_5=x^2+3x+1,
$$
respectively. For $1\le i<j\le 5$, a coincidence between the corresponding affine triangle orbits gives some $a\in X_p$ with $f_i(a)=0$ and $f_j(h(a))=0$, where $h(x)=N(x)/D(x)$ is one of the six fractional linear transformations defining $H(x)$. Since $a\in X_p$, we have $D(a)\ne0$, so $p$ divides the nonzero integer $\operatorname{Res}_x(f_i,D^2f_j(N/D))$. Computing these resultants across all six transformations gives the following prime divisors:
$$
\begin{array}{c|l@{\qquad}c|l}
(i,j) & \text{primes} & (i,j) & \text{primes}\\
\hline
(1,2) & 2,5,13 & (1,3) & 3,13,19\\
(1,4) & 5,11 & (1,5) & 19,31\\
(2,3) & 13 & (2,4) & 3,5\\
(2,5) & 3,41 & (3,4) & 2\\
(3,5) & 2,19 & (4,5) & 2,3
\end{array}
$$
The only prime in this table congruent to $1$ modulo $12$ is $13$. Hence no coincidence occurs for $p>13$. All $60$ integer resultants, together with their Sylvester matrices and an exact verification script, are included in~\cite{LiuFengZenodo}.
\end{proof}

\begin{lemma}\label{lem:p1-connected}
For every prime $p\equiv1\pmod{12}$, the affine orbit multigraph $G_p^*$ is connected.
\end{lemma}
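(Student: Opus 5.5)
I will follow the proof of Lemma~\ref{lem:Gp-connected}, using the walk $\mathcal T(3),\mathcal T(4),\ldots,\mathcal T((p-1)/2)$ and repairing it at the few places where the $p\equiv1\pmod{12}$ setting removes an edge or a vertex. As before, the identity $\mathcal T(a)=\mathcal T(1-a)$ shows that every vertex of $G_p^*$ is $\mathcal T(k)$ for some integer $3\le k\le(p-1)/2$. For $3\le k\le(p-3)/2$ we have $\mathcal T(-k)=\mathcal T(k+1)$, so $\mathcal P(k)$ is a candidate edge joining $\mathcal T(k)$ and $\mathcal T(k+1)$. Small primes ($p=13$, where $|V(G_{13}^*)|=1$, and possibly a few more) I will dispose of by direct inspection.

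For general $p$, the step $k\to k+1$ gives a genuine edge of $G_p^*$ exactly when three conditions hold. First, $P(k)$ must be admissible; this holds for $3\le k\le(p-3)/2$, since $k,-k\in X_p$. Second, the step must not be a loop, $\mathcal T(k)\ne\mathcal T(k+1)$; such steps are simply suppressed as consecutive repetitions, exactly as in Lemma~\ref{lem:Gp-connected}. Third, neither endpoint may be the deleted vertex $\mathcal T(\zeta)$. So the only new phenomenon is that the walk may pass through $\mathcal T(\zeta)$. This happens at the integers $k\in[3,(p-1)/2]$ lying in $H(\zeta)=\{\zeta,1-\zeta\}$. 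Since $\zeta+(1-\zeta)=1$, exactly one of the two representatives lies in $[2,(p-1)/2]$ modulo $p$, so there is a single index $k_0$ with $\mathcal T(k_0)=\mathcal T(\zeta)$. Deleting it cuts the walk into at most two segments, $\mathcal T(3),\ldots,\mathcal T(k_0-1)$ and $\mathcal T(k_0+1),\ldots,\mathcal T((p-1)/2)$, and each segment remains a walk in $G_p^*$.

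It then suffices to join the two segments by a path in $G_p^*$ avoiding $\mathcal T(\zeta)$. I expect this to be the main obstacle, and I would first try a second family of walks. The vertex $\mathcal T(k_0-1)=\mathcal T(1-\zeta)$, or rather its representative, should be linked to $\mathcal T(k_0+1)$ through other representatives in $H$: for example, use $\mathcal T(u)=\mathcal T(u^{-1})$ and build the walk $\mathcal T(2u),\mathcal T(2u+1)$-type chains on the representatives $a^{-1}$ instead of $a$, or use the edge $\mathcal P(u)$ for $u\in H(k_0\pm1)$ other than $k_0\pm1$ itself. Concretely, one candidate is that $\mathcal T(-\zeta)$ has degree $2$ by Lemma~\ref{lem:p1-degrees}, and $\mathcal T(\zeta\pm1)$ are its neighbours through the pairs $\{-\zeta^{\pm1}\}$-related edges. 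I would compute $H(\zeta-1)$ and $H(\zeta+1)$ explicitly, using $\zeta^2=\zeta-1$, to exhibit a short explicit path $\mathcal T(\zeta-1)\to\mathcal T(-\zeta)\to\mathcal T(\zeta+1)$, or one of length at most $3$. Such a path uses only parameters that are rational functions of $\zeta$, so it lies in $G_p^*$ except for finitely many $p$, and those can be excluded by a resultant computation as in Lemma~\ref{lem:p1-degrees}.

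If the explicit bypass fails, the fallback is a counting argument. A component of $G_p^*$ not containing $\mathcal T(-\zeta)$ and missing the other segment would have boundary in $\mathcal K_p$ only through $\mathcal T(\zeta)$. One could then apply the interval-transition argument of Lemma~\ref{lem:two-interval-form} to the union of its parameter sets $Y$, where $|Y\mathbin{\triangle}(-Y)|\le 4$ forces $Y$ to be two intervals. Together with the walk structure, this should contradict the presence of $\zeta$ in the other segment.
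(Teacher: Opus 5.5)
Your framework is right: every vertex of $G_p^*$ occurs in the walk $\mathcal T(3),\ldots,\mathcal T((p-1)/2)$, and the steps $\mathcal P(k)$ are admissible. You are also right that $\mathcal T(\zeta)$ occurs at exactly one index $k_0$, with $3<k_0<(p-1)/2$, since either endpoint would force $p=7$. The gap is the step you yourself call the main obstacle: reconnecting the walk after deleting $\mathcal T(k_0)$. You list candidate strategies but carry none of them out. One is a short bypass through $\mathcal T(-\zeta)$, to be validated by resultants; the other is a loosely sketched interval-transition fallback. So the proposal is not yet a proof. The local picture you sketch is also wrong in two places. First, $\mathcal T(k_0-1)$ is not $\mathcal T(1-\zeta)$; that orbit equals $\mathcal T(\zeta)$, the deleted vertex. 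Second, $\mathcal T(\zeta\pm1)$ are not neighbours of $\mathcal T(-\zeta)$.

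The missing observation is that no bypass is needed, because the two walk-neighbours of $\mathcal T(k_0)$ are the same vertex. Let $z\in\{\zeta,1-\zeta\}$ be the root represented by $k_0$. Then $z^2-z+1=0$ gives $z(z-1)=-1$, i.e.\ $(z-1)^{-1}=-z$, so $-z\in H(z-1)$. Also $-z=1-(z+1)\in H(z+1)$. Hence $\mathcal T(k_0-1)=\mathcal T(-z)=\mathcal T(k_0+1)$, and this common vertex is $\mathcal T(-\zeta)$. Deleting $\mathcal T(k_0)$ therefore only produces a consecutive repetition, suppressed exactly as in Lemma~\ref{lem:Gp-connected}. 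What remains is still a single walk in $G_p^*$ through every vertex. Your proposed path $\mathcal T(\zeta-1)\to\mathcal T(-\zeta)\to\mathcal T(\zeta+1)$ thus has length zero. For $p>13$, the two edges of $G_p^*$ at $\mathcal T(-\zeta)$ actually go to $\mathcal T(\zeta+2)$ and $\mathcal T(\zeta-2)$. This identity is precisely the paper's proof. Carrying out the computation of $H(\zeta\pm1)$ that you suggested would have revealed it, and no exceptional primes or resultant checks arise.
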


\begin{proof}
As in Lemma~\ref{lem:Gp-connected}, every affine orbit of irregular triangles has a representative among $\mathcal T(3),\allowbreak\mathcal T(4),\allowbreak\ldots,\allowbreak\mathcal T((p-1)/2)$, and consecutive distinct terms other than $\mathcal T(\zeta)$ are joined in $G_p^*$ because $\mathcal T(-k)=\mathcal T(k+1)$. Let $z$ be the representative of $\mathcal T(\zeta)$ in this sequence. Since $z^2-z+1=0$, we have $(z-1)^{-1}=-z$, and hence $\mathcal T(z-1)=\mathcal T(-z)=\mathcal T(z+1)$. Thus deleting the exceptional term $\mathcal T(z)$ and suppressing consecutive repetitions gives a walk in $G_p^*$ containing every vertex.
\end{proof}

\subsection{Tutte reduction}\label{subsec:p1-tutte}

Assume throughout this subsection that $p>13$. Since $|V(G_p^*)|=(p-7)/6$ is odd, $G_p^*$ itself has no $1$-factor; instead, we use the following bridge condition to establish the existence of $1$-factors in specific vertex-deleted subgraphs of $G_p^*$, which will be lifted to $\mathcal K_p$ in Subsection~\ref{subsec:p1-lift}.

Let $e$ be a bridge of $G_p^*$ and let $L$ be the vertex set of the odd component of $G_p^*-e$. By Lemma~\ref{lem:p1-degrees}, the three degree-$2$ vertices of $G_p^*$ are $\mathcal T(-2)$, $\mathcal T(\mathrm{i})$, and $\mathcal T(-\zeta)$. Relative to degree $3$, each of these changes the degree sum by $1$, while the possible degree-$1$ vertex $\mathcal T(\rho)$ changes it by $2$ and does not affect the parity. Since $|L|$ is odd and $|\partial(L)|=1$, the degree-sum formula gives
$$
\bigl|L\cap\{\mathcal T(-2),\mathcal T(\mathrm{i}),\mathcal T(-\zeta)\}\bigr|\equiv0\pmod2.
$$
Thus $L$ contains either none or exactly two of $\mathcal T(-2)$, $\mathcal T(\mathrm{i})$, and $\mathcal T(-\zeta)$.

\begin{condition}[Bridge condition for $G_p^*$]\label{cond:p1-bridge}
Let $p>13$ be a prime with $p\equiv1\pmod{12}$. For every bridge $e$ of $G_p^*$, let $L$ be the vertex set of the odd component of $G_p^*-e$. If $x^2+x-1=0$ has no solution in $\mathbb F_p$, then $L$ contains exactly two of $\mathcal T(-2)$, $\mathcal T(\mathrm{i})$, and $\mathcal T(-\zeta)$. If $x^2+x-1=0$ has a root $\rho\in\mathbb F_p$, then either $L$ contains exactly two of these three vertices or $L=\{\mathcal T(\rho)\}$.
\end{condition}

Suppose that $G_p^*$ satisfies Condition~\ref{cond:p1-bridge}.

If $x^2+x-1=0$ has no solution in $\mathbb F_p$, adjoin a new vertex $w$ to $G_p^*$ and join $w$ to $\mathcal T(-2)$, $\mathcal T(\mathrm{i})$, and $\mathcal T(-\zeta)$ by single edges. The resulting multigraph is cubic. For every bridge of $G_p^*$, Condition~\ref{cond:p1-bridge} places two of the three degree-$2$ vertices on one side and the third on the other, so the edges incident with $w$ reconnect the two sides; each new edge also lies on a cycle because $G_p^*$ is connected. Thus the cubic multigraph is connected and bridgeless. By Sch\"onberger's strengthening of Petersen's theorem~\cite[Satz~II]{Schonberger}, every edge of a connected bridgeless cubic multigraph lies in a $1$-factor. Choosing a $1$-factor containing $\{w,\mathcal T(-\zeta)\}$ and deleting $w$ gives a $1$-factor of $G_p^*-\mathcal T(-\zeta)$; choosing one containing $\{w,\mathcal T(\mathrm{i})\}$ gives a $1$-factor of $G_p^*-\mathcal T(\mathrm{i})$.

Now suppose that $x^2+x-1=0$ has a root $\rho\in\mathbb F_p$, and set $G_p^\circ:=G_p^*-\mathcal T(\rho)$. Since $\mathcal T(\rho)$ is a leaf of $G_p^*$, the graph $G_p^\circ$ is connected and of even order. Its four degree-$2$ vertices are $\mathcal T(-\zeta)$, $\mathcal T(-2)$, $\mathcal T(\mathrm{i})$, and $\mathcal T(\rho-1)$, where $\mathcal T(\rho-1)$ loses one incident edge when $\mathcal T(\rho)$ is deleted. If a bridge of $G_p^\circ$ had an odd component disjoint from these four vertices, the same edge would separate an odd component in $G_p^*$ containing none of $\mathcal T(-2)$, $\mathcal T(\mathrm{i})$, $\mathcal T(-\zeta)$, and $\mathcal T(\rho)$, contradicting Condition~\ref{cond:p1-bridge}.

\begin{lemma}\label{lem:p1-core-factor}
Let $p>13$ be a prime with $p\equiv1\pmod{12}$ such that $x^2+x-1=0$ has a root $\rho\in\mathbb F_p$. If $G_p^*$ satisfies Condition~\ref{cond:p1-bridge}, then $G_p^\circ:=G_p^*-\mathcal T(\rho)$ has a $1$-factor.
\end{lemma}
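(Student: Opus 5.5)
We will run the same Tutte-type counting argument used in the proof of Proposition~\ref{prop:tutte-reduction}, now applied to $G_p^\circ$. By the discussion before the statement, $G_p^\circ$ is connected and has even order $(p-7)/6-1$. Its vertex degrees are as follows: four vertices of degree $2$, namely $D:=\{\mathcal T(-\zeta),\mathcal T(-2),\mathcal T(\mathrm{i}),\mathcal T(\rho-1)\}$, and every other vertex of degree $3$. This follows from Lemma~\ref{lem:p1-degrees}, which also gives that these four vertices are pairwise distinct, and from the fact that $\mathcal T(\rho-1)$ loses exactly its one edge to the leaf $\mathcal T(\rho)$. By Tutte's theorem it suffices to show $q:=\operatorname{odd}(G_p^\circ-U)\le|U|$ for every $U$ with $\varnothing\ne U\subsetneq V(G_p^\circ)$; the cases $U=\varnothing$ and $U=V(G_p^\circ)$ are trivial by connectivity and even order.

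For an odd component $L$ of $G_p^\circ-U$, the degree-sum formula gives
$$|\partial(L)|\equiv 1-|L\cap D|\pmod 2,$$
and $|\partial(L)|\ge1$ by connectivity. We bound $|\partial(L)|$ from below according to $|L\cap D|$:
\begin{itemize}
\item If $|L\cap D|$ is odd, then $|\partial(L)|$ is even and hence $|\partial(L)|\ge2$.
\item If $|L\cap D|$ is even and $|\partial(L)|=1$, the unique boundary edge is a bridge of $G_p^\circ$ whose far side is also odd, since the total order is even. The observation preceding the lemma, which is derived from Condition~\ref{cond:p1-bridge}, then forces $L\cap D\neq\varnothing$, so $|L\cap D|\ge2$.
\item Otherwise $|\partial(L)|\ge3$.
\end{itemize}
Writing $s$ for the number of components of the first kind and $t$ for the number of the second kind, we get
$$\sum_L|\partial(L)|\ge 3q-s-2t.$$
Since components are disjoint, each component counted by $s$ uses at least one vertex of $D\setminus U$ and each counted by $t$ uses at least two. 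Hence
$$s+2t\le|D\setminus U|=4-a_U,\qquad a_U:=|U\cap D|.$$

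On the other side, every boundary edge of an odd component ends in $U$, so summing degrees over $U$ gives
$$\sum_L|\partial(L)|\le 3|U|-a_U.$$
Suppose $q>|U|$. Parity gives $q\equiv|U|\pmod2$, because $|V(G_p^\circ)|$ is even, so $q\ge|U|+2$. Combining the bounds,
$$3|U|+6-4+a_U\le 3|U|-a_U,$$
that is, $2+2a_U\le0$, a contradiction. Tutte's theorem then yields a $1$-factor of $G_p^\circ$.

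The main obstacle is the second case above. We must check carefully that a bridge of $G_p^\circ$ cutting off an odd side $L$ with $L\cap D=\varnothing$ really contradicts Condition~\ref{cond:p1-bridge}. Viewed in $G_p^*$, the same edge is a bridge, since $\mathcal T(\rho)$ is a leaf hanging off $\mathcal T(\rho-1)\in D$ and therefore not in $L$. The set $L$ is still odd there and contains none of $\mathcal T(-2),\mathcal T(\mathrm{i}),\mathcal T(-\zeta)$, and $L\neq\{\mathcal T(\rho)\}$, which violates the condition. The remaining bookkeeping is routine.
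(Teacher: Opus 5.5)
Your proof is correct and follows essentially the same Tutte-counting argument as the paper. The paper packages your three cases into the single bound $|\partial(L)|\ge 3-|L\cap D|$ and compares $3q-s\le 3|U|-r$ with $s\le 4-r$, which is equivalent to your $s+2t\le 4-a_U$ bookkeeping, and it uses Condition~\ref{cond:p1-bridge} in the same way, via the observation that $\mathcal T(\rho-1)\notin L$ makes the bridge of $G_p^\circ$ a bridge of $G_p^*$.
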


\begin{proof}
Let $U\subseteq V(G_p^\circ)$ and set $q:=\operatorname{odd}(G_p^\circ-U)$. Let $r$ be the number of vertices of $U$ among $\mathcal T(-\zeta)$, $\mathcal T(-2)$, $\mathcal T(\mathrm{i})$, and $\mathcal T(\rho-1)$, and let $s$ be the number of these four vertices lying in odd components of $G_p^\circ-U$.

For the vertex set $L$ of an odd component of $G_p^\circ-U$, let $s_L$ be the number of these four vertices contained in $L$. The degree-sum formula gives $|\partial(L)|\equiv1-s_L\pmod2$. If $s_L=0$ and $|\partial(L)|=1$, then $\mathcal T(\rho-1)\notin L$, so the same edge separates $L$ in $G_p^*$, contradicting Condition~\ref{cond:p1-bridge}. Hence $|\partial(L)|\ge3-s_L$ in all cases. Since the boundary sets $\partial(L)$ of distinct odd components are pairwise disjoint and every such edge has its other endpoint in $U$, summing over all odd components gives
$$
3q-s\leq\sum_L|\partial(L)|\leq3|U|-r.
$$
If $q>|U|$, then $q\equiv|U|\pmod2$ because $|V(G_p^\circ)|$ is even, so $q\ge|U|+2$. This implies $6\le s-r$, whereas $s\le4-r$ gives $s-r\le4-2r\le4$, a contradiction. By Tutte's $1$-factor theorem~\cite{Tutte}, $G_p^\circ$ has a $1$-factor.
\end{proof}

\begin{lemma}\label{lem:p1-four-defect}
Let $G$ be a connected loopless multigraph of even order with a $1$-factor. Suppose that exactly four vertices of $G$ have degree $2$, every other vertex has degree $3$, and every odd component separated by a bridge contains at least one of the four degree-$2$ vertices. Fix a degree-$2$ vertex $u$. Then at most one of the other three degree-$2$ vertices $v$ has the property that $G-u-v$ has no $1$-factor.
\end{lemma}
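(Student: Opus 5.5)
The plan is to argue by contradiction through Tutte's theorem, exactly as in the proof of Lemma~\ref{lem:p1-core-factor}. The first step extracts a rigid barrier structure from each degree-$2$ vertex $v$ for which $G-u-v$ fails to have a $1$-factor. The second step shows that two such barriers for different $v$ are incompatible.

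Write $D$ for the set of the four degree-$2$ vertices. Suppose $G-u-v$ has no $1$-factor, where $v\in D\setminus\{u\}$. Since $|V(G-u-v)|$ is even, Tutte's theorem and parity give $U'\subseteq V(G-u-v)$ with $\operatorname{odd}(G-u-v-U')\ge|U'|+2$.

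Put $U:=U'\cup\{u,v\}$ and $q:=\operatorname{odd}(G-U)$, so that $q\ge|U|$. Let $r:=|U\cap D|\ge2$, and let $s$ be the number of vertices of $D$ lying in odd components of $G-U$; then $s\le4-r$. For an odd component $L$ of $G-U$ with $s_L:=|L\cap D|$, the degree-sum formula gives $|\partial(L)|\equiv1-s_L\pmod2$. If $s_L=0$ and $|\partial(L)|=1$, then the unique boundary edge is a bridge of $G$. Both sides of this bridge have odd order because $|V(G)|$ is even, so $L$ is an odd bridge component containing no degree-$2$ vertex, which contradicts the hypothesis. Together with connectivity this gives $|\partial(L)|\ge3-s_L$ in all cases. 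Summing over the odd components and bounding the edges leaving $U$ by degrees, we get
$$
3q-s\le\sum_L|\partial(L)|\le3|U|-r .
$$
This yields $3(q-|U|)\le s-r\le4-2r\le0$. Hence every inequality is tight: $q=|U|$, $r=2$ (so $U\cap D=\{u,v\}$), and the other two vertices $w,w'$ of $D$ lie in odd components. Moreover, every edge leaving $U$ goes to an odd component, and each boundary attains its lower bound. Since $G$ is connected, $G-U$ then has no even components.

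Two shapes remain. Either $w,w'$ lie in a single odd component $L_0$ with $|\partial(L_0)|=1$, or they lie in distinct odd components $L_w,L_{w'}$ with $|\partial(L_w)|=|\partial(L_{w'})|=2$. In either case let $S_v$ be $U$ together with all odd components of $G-U$ that avoid $\{w,w'\}$. Then $S_v\supseteq\{u,v\}$ and $S_v\cap\{w,w'\}=\varnothing$. Its edge boundary $\partial(S_v)$ has size $1$ in the first case and $4$ in the second, splitting into two $2$-edge cuts around $w$ and around $w'$. Its parity is fixed by $|S_v|\equiv c\pmod2$, where $c\in\{1,2\}$ is the number of components containing $w,w'$.

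For the final step, suppose two distinct vertices $v,v'\in D\setminus\{u\}$ both fail. We then have $S_v\ni u,v$ avoiding the other two vertices of $D$, and $S_{v'}\ni u,v'$ avoiding the other two; in particular $v'\notin S_v$ and $v\notin S_{v'}$. I would apply submodularity and posimodularity of the cut function $|\partial(\cdot)|$ to $S_v\cap S_{v'}$, $S_v\cup S_{v'}$ and the differences $S_v\setminus S_{v'}$, $S_{v'}\setminus S_v$. This should produce a vertex set whose boundary is empty, which contradicts connectivity, or has exactly one edge. In the latter case the set would have odd order, and its intersection pattern with $D$ would show that it contains no degree-$2$ vertex, contradicting the bridge hypothesis. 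The degree-sum parity $|\partial(X)|\equiv|X\cap D|+|X|\pmod 2$ (valid since $|\partial(X)|\equiv3|X|-|X\cap D|$) will control the parities of these sets.

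I expect this combination step to be the main obstacle, since the cases $c=1$ and $c=2$ for each of $v,v'$ must be handled separately. In the $(2,2)$ case the four $2$-edge cuts give a total budget of $8$ that has to be shared carefully. If the uncrossing becomes unwieldy, I would fall back on using the $1$-factor $M$ of $G$. In each tight barrier every odd component is matched to $U$ by exactly one $M$-edge, so $M$ meets each of the $2$-edge cuts in exactly one edge. An $M$-alternating-path argument would then link $v$ and $v'$ and produce a $1$-factor of one of $G-u-v$ or $G-u-v'$.
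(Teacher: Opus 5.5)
Your first step is correct: for one failing $v$, the barrier $U=U'\cup\{u,v\}$ is forced to be tight, with $U\cap D=\{u,v\}$, $U$ independent, no even components, and the two shapes you describe. But the lemma is about two failing vertices being incompatible, and you do not carry that step out. The mechanism you propose for it also cannot work. Let $x$ be the fourth degree-$2$ vertex. In every case $S_v\cap D=\{u,v\}$ and $S_{v'}\cap D=\{u,v'\}$. So the four uncrossed sets $S_v\cap S_{v'}$, $S_v\setminus S_{v'}$, $S_{v'}\setminus S_v$ and $V\setminus(S_v\cup S_{v'})$ meet $D$ in exactly $\{u\}$, $\{v\}$, $\{v'\}$ and $\{x\}$. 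None of them can be an odd bridge side free of degree-$2$ vertices, so the bridge hypothesis never applies to them.

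The case $|\partial(S_v)|=|\partial(S_{v'})|=1$ does close, because two bridge cuts are laminar, so one of the four regions would be empty. Once a boundary of size $4$ is involved, however, the cut data alone give no contradiction, because they do not encode that $G-u-v$ lacks a $1$-factor. For example, subdivide the four edges of one parallel class of the $3$-cube. The result is bridgeless, has a $1$-factor, and has four pairwise nonadjacent degree-$2$ vertices. For every labelling $u,v,v',x$ of them, $V\setminus\{v',x\}$ and $V\setminus\{v,x\}$ have exactly the shape of your $S_v$ and $S_{v'}$ in the case $c=2$. Yet for each $u$ only one of the other three vertices actually fails. Your fallback via $M$-alternating paths is not specified enough to assess.

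The missing idea is a \emph{single} barrier that witnesses both failures at once. The paper gets this from the Gallai--Edmonds decomposition $(D_{\rm GE},A_{\rm GE},C_{\rm GE})$ of $G_u=G-u$. Since $G$ has a $1$-factor, $G_u$ has deficiency $1$, and $z\in D_{\rm GE}$ if and only if $G_u-z$ has a $1$-factor. Hence two failing vertices $v,w$ both lie in $A_{\rm GE}\cup C_{\rm GE}$.

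Put $X:=A_{\rm GE}\cup\{u\}$ and $h:=|X|$. Then $G-X$ consists of the $h$ odd components of $G_u[D_{\rm GE}]$ and the $q_C$ components of $G_u[C_{\rm GE}]$, and all their boundary edges go into $X$. Your own step-1 counting then gives $3h-r_D+q_C\le3h-r_A$, that is, $q_C\le r_D-r_A=4-2r_A-r_C$. With $a:=|\{v,w\}\cap A_{\rm GE}|$ we have $r_A\ge1+a$ and $r_C\ge2-a$, so $q_C\le-a$. This is impossible if $a\ge1$. If $a=0$, then $v,w\in C_{\rm GE}$ forces $q_C\ge1$, again a contradiction.

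So your inequality is the right tool. It has to be applied once, to the canonical Gallai--Edmonds barrier, not to two independent Tutte barriers that you then try to merge.
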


\begin{proof}
Write $G_u:=G-u$. Deleting the edge incident with $u$ from a $1$-factor of $G$ leaves a matching of $G_u$ with one unmatched vertex, so every maximum matching of $G_u$ leaves exactly one vertex unmatched. Let $(D_{\rm GE},A_{\rm GE},C_{\rm GE})$ be the Gallai--Edmonds decomposition of $G_u$~\cite[Theorem~3.2.1]{LovaszPlummer}: $D_{\rm GE}$ is the set of vertices unmatched in some maximum matching of $G_u$, $A_{\rm GE}$ consists of the vertices outside $D_{\rm GE}$ having a neighbour in $D_{\rm GE}$, and $C_{\rm GE}:=V(G_u)\setminus(D_{\rm GE}\cup A_{\rm GE})$. Then $G_u[D_{\rm GE}]$ has $|A_{\rm GE}|+1$ odd components, there are no edges between $D_{\rm GE}$ and $C_{\rm GE}$, and a vertex $z\in V(G_u)$ belongs to $D_{\rm GE}$ if and only if $G_u-z$ has a $1$-factor.

Suppose toward a contradiction that $v$ and $w$ are distinct degree-$2$ vertices different from $u$ such that neither $G-u-v$ nor $G-u-w$ has a $1$-factor. Then $v,w\notin D_{\rm GE}$. Set $X_{\rm GE}:=A_{\rm GE}\cup\{u\}$ and $h:=|X_{\rm GE}|=|A_{\rm GE}|+1$. Let $r_D,r_A,r_C$ denote the numbers of the four degree-$2$ vertices lying in $D_{\rm GE},X_{\rm GE},C_{\rm GE}$, respectively, and let $q_C$ be the number of components of $G_u[C_{\rm GE}]$. With $a:=|\{v,w\}\cap A_{\rm GE}|$, we have $r_A\ge1+a$ and $r_C\ge2-a$.

We compare the number of boundary edges leaving the components of $G_u[D_{\rm GE}]$ and $G_u[C_{\rm GE}]$ with the total degree available in $X_{\rm GE}$. For each odd component $L$ of $G_u[D_{\rm GE}]$, let $r_L$ be the number of the four degree-$2$ vertices it contains. The degree-sum formula and the bridge hypothesis give $|\partial_G(L)|\ge3-r_L$. Hence the $h$ odd components of $G_u[D_{\rm GE}]$ have total boundary at least $3h-r_D$. Every component of $G_u[C_{\rm GE}]$ has positive boundary in $G$; since there are no edges between $D_{\rm GE}$ and $C_{\rm GE}$ in $G_u$, the boundary sets $\partial_G(L)$ of all components of $G_u[D_{\rm GE}]$ and $G_u[C_{\rm GE}]$ are pairwise disjoint and have their other endpoints in $X_{\rm GE}$. Since the sum of the degrees in $G$ of the vertices in $X_{\rm GE}$ is $3h-r_A$, we obtain
$$
3h-r_D+q_C\le3h-r_A.
$$
Using $r_D+r_A+r_C=4$ gives $q_C\le r_D-r_A=4-2r_A-r_C\le-a$. This is impossible if $a\ge1$; if $a=0$, then $v,w\in C_{\rm GE}$ forces $q_C\ge1$, whereas $q_C\le0$.
\end{proof}

Let $G$ be a multigraph with a $1$-factor $\mathcal M$. A path in $G$ is called \emph{$\mathcal M$-saturated} if its edges alternate between $\mathcal M$ and $E(G)\setminus\mathcal M$ and its first and last edges belong to $\mathcal M$.

\begin{lemma}\label{lem:p1-compatible-paths}
Let $p>13$ be a prime with $p\equiv1\pmod{12}$ such that $x^2+x-1=0$ has a root $\rho\in\mathbb F_p$, and suppose that $G_p^*$ satisfies Condition~\ref{cond:p1-bridge}. Let $\mathcal M$ be a $1$-factor of $G_p^\circ:=G_p^*-\mathcal T(\rho)$. If $G_p^\circ-\mathcal T(-\zeta)-\mathcal T(\mathrm{i})$ and $G_p^\circ-\mathcal T(\rho-1)-\mathcal T(\mathrm{i})$ have $1$-factors, while $G_p^\circ-\mathcal T(-\zeta)-\mathcal T(\rho-1)$ has none, then there exist $\mathcal M$-saturated paths in $G_p^\circ$ from $\mathcal T(\mathrm{i})$ to $\mathcal T(-\zeta)$ and from $\mathcal T(\mathrm{i})$ to $\mathcal T(\rho-1)$ that share an initial subpath from $\mathcal T(\mathrm{i})$ and have no other vertices in common.
\end{lemma}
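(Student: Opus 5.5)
The plan is to work entirely in the standard matching-theoretic correspondence between $1$-factors of vertex-deleted subgraphs and $\mathcal M$-saturated paths. Write $t_i:=\mathcal T(\mathrm{i})$, $t_\zeta:=\mathcal T(-\zeta)$ and $t_\rho:=\mathcal T(\rho-1)$. If $\mathcal N$ is a $1$-factor of $G_p^\circ-x-y$, then the symmetric difference $\mathcal M\triangle\mathcal N$ is a disjoint union of alternating cycles together with exactly one path. That path joins $x$ to $y$, is $\mathcal M$-saturated, and its interior vertices are covered by both matchings. Conversely, an $\mathcal M$-saturated path from $x$ to $y$ yields a $1$-factor of $G_p^\circ-x-y$, namely $\mathcal M\triangle E(\text{path})$.

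Applying the first half of this correspondence to the two $1$-factors assumed in the statement gives $\mathcal M$-saturated paths $Q_1$ from $t_i$ to $t_\zeta$ and $Q_2$ from $t_i$ to $t_\rho$. The hypothesis that $G_p^\circ-t_\zeta-t_\rho$ has no $1$-factor says, by the converse, that there is \emph{no} $\mathcal M$-saturated path from $t_\zeta$ to $t_\rho$. The whole task is to modify $Q_1,Q_2$ so that they share an initial segment from $t_i$ and are otherwise disjoint.

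\textbf{Step 1: surgery.} Traverse $Q_2$ starting from $t_\rho$ and let $z$ be the first vertex lying on $Q_1$; such a $z$ exists because $t_i\in Q_1$. Let $R$ denote the subpath of $Q_2$ from $t_\rho$ to $z$, which meets $Q_1$ only at $z$. Then $z$ splits $Q_1$ into a segment $S$ from $t_i$ to $z$ and a segment $S'$ from $z$ to $t_\zeta$. Because $\mathcal M$ is perfect, $z$ has a unique $\mathcal M$-edge $m_z$, and this edge lies on $Q_1$ (unless $z$ is an endpoint of $Q_1$), either in $S$ or in $S'$.

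\textbf{Step 2: parity case analysis.} Orient the alternation from the $\mathcal M$-edge at $t_\rho$. There are two possibilities for how $R$ arrives at $z$.
\begin{itemize}
\item If $R$ arrives at $z$ via a non-$\mathcal M$ edge, then $z$'s $\mathcal M$-edge is the next edge of $Q_2$. That edge is $m_z$, so it lies on $Q_1$. We then check whether $R$ followed by $S'$ is $\mathcal M$-saturated from $t_\rho$ to $t_\zeta$. This happens exactly when $m_z\in S'$, and it is excluded by hypothesis. Hence $m_z\in S$, and then $S$ together with $R$ gives an $\mathcal M$-saturated path $t_i\to t_\rho$ sharing the initial segment $S$ with $Q_1$ and disjoint from it elsewhere. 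This is the required pair, with common initial subpath $S$.
\item If $R$ arrives at $z$ via an $\mathcal M$-edge, that edge is $m_z$, so $m_z$ lies both on $R$ and on $Q_1$. But $R$ meets $Q_1$ only at $z$, so this is impossible unless $z$ is an endpoint of $Q_1$. The endpoint $t_i$ has its $\mathcal M$-edge on $Q_1$, and the endpoint $t_\zeta$ would make $R$ itself a forbidden $t_\rho$–$t_\zeta$ path.
\end{itemize}
Degenerate cases, such as $z=t_\zeta$ or $z=t_i$, need the same short check, using that $t_\zeta\ne t_\rho$.

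\textbf{Main obstacle.} I expect the delicate step to be bookkeeping the alternation parity at $z$ correctly. In particular, one must confirm that in the surviving case the concatenation of $S$ and $R$ is genuinely a path (no repeated vertices), which follows from the choice of $z$ as the first hit, and that its end edges are $\mathcal M$-edges. The hypothesis that no $\mathcal M$-saturated $t_\zeta$–$t_\rho$ path exists is used exactly once, to kill the alternative $m_z\in S'$. Lemma~\ref{lem:p1-four-defect} is not needed here; it only guarantees elsewhere that the hypotheses of this lemma are the relevant ones.
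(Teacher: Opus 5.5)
Your argument is correct and is essentially the paper's proof. Both take symmetric differences with $\mathcal M$ to obtain the two saturated paths and locate the first meeting vertex seen from one far end. Both then use the absence of an $\mathcal M$-saturated $\mathcal T(-\zeta)$--$\mathcal T(\rho-1)$ path to exclude the case where the shared $\mathcal M$-edge is traversed in opposite directions, and reroute otherwise.

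The only difference is that you perform the rerouting directly, replacing the path to $\mathcal T(\rho-1)$ where the paper replaces the path to $\mathcal T(-\zeta)$. This avoids the paper's extremal choice of $(Q_b,Q_c)$, which indeed is not needed. Two small points of bookkeeping: $m_z$ lies on $Q_1$ even when $z$ is an endpoint of $Q_1$. The case $z=\mathcal T(\rho-1)$ with $R$ trivial also falls under your first case.
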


\begin{proof}
Write $u:=\mathcal T(\mathrm{i})$, $b:=\mathcal T(-\zeta)$, and $c:=\mathcal T(\rho-1)$. Taking the symmetric differences of $\mathcal M$ with $1$-factors of $G_p^\circ-u-b$ and $G_p^\circ-u-c$ gives $\mathcal M$-saturated paths $Q_b$ from $u$ to $b$ and $Q_c$ from $u$ to $c$. Choose such a pair $(Q_b,Q_c)$ first minimizing $|E(Q_b)\cup E(Q_c)|$ and, subject to this, maximizing the length of their common initial subpath from $u$.

Every vertex shared by $Q_b$ and $Q_c$ lies on a common edge of $\mathcal M$, since its unique incident $\mathcal M$-edge belongs to both paths. Suppose that $Q_b$ and $Q_c$ share a vertex outside their common initial subpath. Let $e\in\mathcal M$ be the last common edge along $Q_b$, with endpoints $v$ and $w$ appearing in that order along $Q_b$. Then the subpath $Q_b[w,b]$ meets $Q_c$ only at $w$, as any further common vertex would lie on a common $\mathcal M$-edge after $e$.

If $Q_c$ traverses $e$ from $w$ to $v$, then following $Q_b$ backwards from $b$ to $w$ and then $Q_c$ from $w$ to $c$ gives a simple $\mathcal M$-saturated path between $b$ and $c$ (even when $w=b$, where the first part has length zero). Taking the symmetric difference of $\mathcal M$ with this path yields a $1$-factor of $G_p^\circ-b-c$, a contradiction.

Otherwise, $Q_c$ traverses $e$ from $v$ to $w$. Replacing $Q_b[u,w]$ by $Q_c[u,w]$ while retaining $Q_b[w,b]$ gives a simple $\mathcal M$-saturated path $Q_b'$ from $u$ to $b$: indeed, $Q_c[u,w]$ ends with $e\in\mathcal M$, and $Q_b[w,b]$, when nonempty, starts with an edge outside $\mathcal M$. Moreover, $E(Q_b')\cup E(Q_c)\subseteq E(Q_b)\cup E(Q_c)$. Strict containment contradicts the minimality of $|E(Q_b)\cup E(Q_c)|$, whereas equality contradicts the maximality of the common initial subpath, which now extends at least to $w$.
\end{proof}

To summarize, when $p>13$, $G_p^*$ satisfies Condition~\ref{cond:p1-bridge}, and $x^2+x-1=0$ has a root $\rho\in\mathbb F_p$, two cases arise for $G_p^\circ=G_p^*-\mathcal T(\rho)$:
\begin{enumerate}[label=(\arabic*)]
\item If $G_p^\circ-\mathcal T(-\zeta)-\mathcal T(\rho-1)$ has a $1$-factor, then adjoining the pendant edge $\{\mathcal T(\rho-1),\mathcal T(\rho)\}$ gives a $1$-factor of $G_p^*-\mathcal T(-\zeta)$, while Lemma~\ref{lem:p1-core-factor} provides a $1$-factor of $G_p^*-\mathcal T(\rho)$.
\item If $G_p^\circ-\mathcal T(-\zeta)-\mathcal T(\rho-1)$ has no $1$-factor, then applying Lemma~\ref{lem:p1-four-defect} twice (with $u=\mathcal T(-\zeta)$ and with $u=\mathcal T(\rho-1)$) shows that both $G_p^\circ-\mathcal T(-\zeta)-\mathcal T(\mathrm{i})$ and $G_p^\circ-\mathcal T(\rho-1)-\mathcal T(\mathrm{i})$ have $1$-factors; Lemma~\ref{lem:p1-compatible-paths} then provides compatible $\mathcal M$-saturated paths from $\mathcal T(\mathrm{i})$ to $\mathcal T(-\zeta)$ and to $\mathcal T(\rho-1)$ for any $1$-factor $\mathcal M$ of $G_p^\circ$.
\end{enumerate}

\subsection{Lifting to the full K\"ohler graph}\label{subsec:p1-lift}

Throughout this subsection, we write the cyclic group $M_p=\mathbb F_p^\times/\{\pm1\}$ of order $(p-1)/2$ additively, with identity element $0$. Since $p\equiv1\pmod{12}$, the group $M_p$ has a unique subgroup $W$ of order $3$ and a unique element $\tau$ of order $2$.

For each $a\in X_p$, let $\mathcal V(a):=\{\mathcal T_{\pm}(T):T\in\mathcal T(a)\}$ be the set of vertices of $\mathcal K_p$ belonging to the affine triangle orbit $\mathcal T(a)$. When $a\in X_p^*$, the action of $M_p$ on $\mathcal V(a)$ is regular, and we index $\mathcal V(a)=\{\mathcal T(a)_x:x\in M_p\}$ so that the action of $y\in M_p$ sends $\mathcal T(a)_x$ to $\mathcal T(a)_{x+y}$. For the exceptional orbit $\mathcal T(\zeta)$, each vertex in $\mathcal V(\zeta)$ has stabilizer $W$ in $M_p$, so we index $\mathcal V(\zeta)=\{\mathcal T(\zeta)_{x+W}:x+W\in M_p/W\}$ by the cosets of $W$ in $M_p$, with the action of $y\in M_p$ adding $y$ to the coset index.

\begin{lemma}\label{lem:p1-scalar-stabilizer}
Let $p>13$ be a prime with $p\equiv1\pmod{12}$, and let $e\in E(\mathcal K_p)$. If a nonidentity element of $M_p$ fixes $e$, then the affine trapezoid orbit containing $e$ is the orbit omitted as a loop at $\mathcal T(\mathrm{i})$, and the stabilizer of $e$ in $M_p$ is $\{0,\tau\}$.
\end{lemma}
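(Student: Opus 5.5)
Let $e=\mathcal P_{\pm}(P)$ be an edge of $\mathcal K_p$ with $P=\{0,a,b,a+b\}$ admissible, and suppose that $m\in\mathbb F_p^\times$, $m\neq\pm1$, fixes $e$. Then $mP$ is a translate of $P$, so some affine map $\psi(x)=mx+c$ with $m\ne\pm1$ preserves $P$ setwise. The plan is to analyse $\operatorname{Stab}_{\operatorname{Aff}(\mathbb F_p)}(P)$ directly. After normalising, $P$ is affinely equivalent to $P(a')$ for some parameter $a'$, and the stabilizer is conjugate. So it suffices to describe $\operatorname{Stab}(P(a))$ for admissible $a$.

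This stabilizer permutes the four triples of $P(a)$, two in $\mathcal T(a)$ and two in $\mathcal T(-a)$. We always have the reflection $\tau_a(t)=1+a-t$, with multiplier $-1$.

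\emph{Case $\mathcal T(a)\neq\mathcal T(a)$'s partner orbit, i.e.\ $\mathcal T(a)\ne\mathcal T(-a)$.} Any $\psi$ in the stabilizer preserves the pair of triples in $\mathcal T(a)$. Hence $\psi$ or $\tau_a\psi$ fixes one triple $T$ of $\mathcal T(a)$ setwise.
\begin{itemize}
\item If $\mathcal T(a)\ne\mathcal T(\zeta)$, the triple has trivial affine stabilizer (Lemma~\ref{lem:size-of-Ha}, as in Section~\ref{sec:p5}). So $\psi\in\{1,\tau_a\}$, and the multiplier is $\pm1$, i.e.\ trivial in $M_p$.
\item If $\mathcal T(a)=\mathcal T(\zeta)$ (or $\mathcal T(-a)=\mathcal T(\zeta)$), the triple has stabilizer of order $3$. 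I would check that its nontrivial elements, say $x\mapsto1+(\zeta-1)x$ after normalising $T=\{0,1,\zeta\}$, do not preserve the trapezoid. The fourth point of the trapezoid containing $\{0,1,\zeta\}$ is one of $1+\zeta$, $\zeta-1$, $1-\zeta$. The $3$-cycle maps this point to a specific value, and equality would force a polynomial relation that conflicts with $\zeta^2-\zeta+1=0$ and $p>13$. This is a finite computation on the three possible trapezoids.
\end{itemize}

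\emph{Case $\mathcal T(a)=\mathcal T(-a)$.} By Lemma~\ref{lem:trapezoid-loop} this happens when $a^2=-1$, $a^2-a-1=0$ or $a^2+a-1=0$. Now the stabilizer can permute all four triples, so it has order dividing $8$; more precisely it embeds in the dihedral symmetries of the quadrilateral, since affine maps preserve the midpoint relation $0+(1+a)=1+a$.
\begin{itemize}
\item For $a=\mathrm i$, I would exhibit $x\mapsto \mathrm i x+c$ mapping $\{0,1,\mathrm i,1+\mathrm i\}$ to itself, namely a rotation of the ``square'' with $c=1$: it sends $0\mapsto1$, $1\mapsto1+\mathrm i$, $1+\mathrm i\mapsto \mathrm i$, $\mathrm i\mapsto0$. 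Its multiplier is $\mathrm i$, whose class in $M_p$ has order $2$, i.e.\ equals $\tau$. I would then show that no element of order $4$ in $\mathbb F_p^\times$ other than $\pm\mathrm i$, and no other multiplier, occurs. Any stabilizing map induces a permutation of $4$ points lying in $D_4$, and its multiplier has order dividing $4$. This gives stabilizer $\{0,\tau\}$ in $M_p$.
\item For the golden cases $a^2\mp a-1=0$, any element inducing a nontrivial permutation beyond $\tau_a$ must fix a triple or act as a $4$-cycle or another reflection. Checking the resulting linear equations gives $a^2=-1$ or small primes, which are excluded for $p>13$. So the stabilizer is $\{1,\tau_a\}$ and the $M_p$-stabilizer is trivial.
\end{itemize}

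The main obstacle is organising these case checks (the $\zeta$-case and the golden-ratio case) cleanly. I would handle them by listing the at most $8$ permutations in $D_4$ of the four labelled points. Each permutation determines $\psi$ from two points, and the remaining conditions become polynomial equations in $a$ whose resultants with the defining quadratics have only prime factors $\le13$.
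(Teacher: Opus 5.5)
Your proof is correct, but it takes a different and heavier route than the paper.

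The paper never looks at triangle orbits. It translates the trapezoid so that its centre of symmetry is $0$ and writes $P=\{\pm u,\pm v\}$. If $\mu P=P+t$, summing the four points gives $4t=0$, so $\mu P=P$. Then $\mu\neq\pm1$ forces $\mu u=\pm v$ and $\mu^2u=-u$, so $\mu^2=-1$ and $P=\{\pm u,\pm\mathrm i u\}$. That argument is uniform in $P$. It needs no case split on $\mathcal T(a)=\mathcal T(-a)$, on $\mathcal T(\zeta)$, or on the golden-ratio parameters.

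Your route instead uses three ingredients: the trivial affine stabilizers of non-exceptional irregular triangles, the order-$3$ stabilizer at $\mathcal T(\zeta)$, and Lemma~\ref{lem:trapezoid-loop}. In exchange it identifies the full affine stabilizer of each $P(a)$. That stabilizer is $\{1,\tau_a\}$ unless $a^2=-1$, in which case it is cyclic of order $4$, generated by $x\mapsto ax+1$.

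Much of your case analysis can be avoided.
\begin{itemize}
\item When $\mathcal T(a)\neq\mathcal T(-a)$, at most one of the two orbits is $\mathcal T(\zeta)$. You can always use the other pair of triples and skip the $3$-cycle computation.
\item Your own $D_4$ observation settles everything at once. It forces every stabilizing multiplier to satisfy $m^4=1$. A map with multiplier $\pm\mathrm i$ has order $4$, so it acts as one of the $4$-cycles $(0\;1\;1{+}a\;a)^{\pm1}$, and this immediately gives $a^2=-1$.
\end{itemize}

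For that shortcut, and for your ``order dividing $8$'' claim, you should justify the $D_4$ embedding explicitly. The partition $\{\{0,1+a\},\{1,a\}\}$ is the only partition of $P(a)$ into two pairs with equal sums, and affine maps preserve equal-sum relations.
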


\begin{proof}
Translating a representative trapezoid $P(a)=\{0,1,a,1+a\}$ by $-(1+a)/2$ centers its reflection symmetry at $0$, so we may write it as $P=\{\pm u,\pm v\}$. A scalar $\mu\in\mathbb F_p^\times$ whose class in $M_p$ fixes $\mathcal P_{\pm}(P)$ satisfies $\mu P=P+t$ for some $t\in\mathbb F_p$. Summing the four elements gives $4t=0$, so $t=0$ and $\mu P=P$. If $\mu\notin\{\pm1\}$, multiplication by $\mu$ cannot preserve both $\{u,-u\}$ and $\{v,-v\}$ separately. After replacing $v$ by $-v$ if necessary, we have $\mu u=v$ and $\mu v\in\{u,-u\}$. Since $\mu^2\neq1$, we obtain $\mu v=-u$ and $\mu^2=-1$. Thus $\mu=\pm\mathrm{i}$, whose class in $M_p$ is $\tau$, and $P$ belongs to the affine trapezoid orbit omitted as a loop at $\mathcal T(\mathrm{i})$.
\end{proof}

\begin{lemma}\label{lem:p1-ordinary-lift}
Let $p>13$ be a prime with $p\equiv1\pmod{12}$. Let $a,b\in X_p^*$, and let $e$ be an oriented edge of $G_p^*$ from $\mathcal T(a)$ to $\mathcal T(b)$. Then there exists $c_e\in M_p$ such that the edges of $\mathcal K_p$ belonging to the affine trapezoid orbit represented by $e$ are precisely
$$
\bigl\{\{\mathcal T(a)_x,\mathcal T(b)_{x+c_e}\}:x\in M_p\bigr\}.
$$
In particular, these edges define a bijection from $\mathcal V(a)$ to $\mathcal V(b)$.
\end{lemma}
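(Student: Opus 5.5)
The plan is to use the $M_p$-equivariance of the incidence structure of $\mathcal K_p$ together with the freeness results already established. Let $\mathcal E$ denote the set of edges of $\mathcal K_p$ lying in the affine trapezoid orbit represented by $e$, say $\mathcal P(a')$ with $\mathcal T(a')=\mathcal T(a)$ and $\mathcal T(-a')=\mathcal T(b)$. By Lemma~\ref{lem:multiplier-quotient}, $\mathcal E$ is a single $M_p$-orbit of edges of $\mathcal K_p$. Since $e$ is an edge of $G_p^*$, it is not a loop. Hence it is not the loop orbit at $\mathcal T(\mathrm i)$, and Lemma~\ref{lem:p1-scalar-stabilizer} shows that $M_p$ acts freely on $\mathcal E$. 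Thus $|\mathcal E|=|M_p|=(p-1)/2$.

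Next, fix one edge $e_0\in\mathcal E$, with representative trapezoid $P$. By Definition~\ref{def:full-kohler-graph}, its two endpoints are the orbits of the two reflection-paired triple classes of $P$. One of these lies in $\mathcal T(a)$ and the other in $\mathcal T(b)$. These orbits are distinct because $\mathcal T(a)\ne\mathcal T(-a)$ on an edge of $G_p^*$. Write the endpoints as $\mathcal T(a)_{x_0}$ and $\mathcal T(b)_{y_0}$, using the regular indexing available because $a,b\in X_p^*$, and set $c_e:=y_0-x_0$. For $x\in M_p$, acting by $x-x_0$ sends $e_0$ to an edge of $\mathcal E$. By the equivariance of the edge-endpoint relation, which holds because multiplication maps trapezoids and their triples compatibly, this edge has endpoints $\mathcal T(a)_x$ and $\mathcal T(b)_{x+c_e}$. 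As $x$ ranges over $M_p$, these edges are pairwise distinct, since they have distinct endpoints in $\mathcal V(a)$. There are $(p-1)/2$ of them, so they exhaust $\mathcal E$.

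The bijection statement then follows immediately, since $x\mapsto x+c_e$ is a bijection of $M_p$.

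The only delicate point is confirming that every edge of $\mathcal E$ has exactly one endpoint in $\mathcal V(a)$ and one in $\mathcal V(b)$, with no ambiguity from orientation. Since $\mathcal V(a)\cap\mathcal V(b)=\varnothing$, the orientation of $e$ just fixes which endpoint is assigned the index $x$. An actual freeness argument is not needed beyond Lemma~\ref{lem:p1-scalar-stabilizer}: even without it, the map $x\mapsto$ (edge with endpoint $\mathcal T(a)_x$) would be well defined, because each vertex of $\mathcal K_p$ is incident with at most one edge of a given affine orbit. That in turn follows from Lemma~\ref{lem:p5-trapezoid-count}-style counting, where each triple lies in one block of the orbit.
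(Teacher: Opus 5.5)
Your proof is correct and follows the paper's argument. You develop one edge of the orbit under $M_p$ using the regular indexing of $\mathcal V(a)$ and $\mathcal V(b)$. You then use Lemma~\ref{lem:p1-scalar-stabilizer} (or, as you also note, the distinctness of the endpoints in $\mathcal V(a)$) to see that the $|M_p|$ developed edges are distinct and exhaust the orbit.

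Your closing remark is superfluous. Its claim that a vertex meets at most one edge of a given affine orbit holds only for non-loop orbits; for example, it fails for the loop orbit at $\mathcal T(\rho)$. This is harmless here, since $e$ is not a loop.
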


\begin{proof}
Choose an edge of $\mathcal K_p$ in the affine orbit represented by $e$ with one endpoint $\mathcal T(a)_0$, and write its other endpoint as $\mathcal T(b)_{c_e}$. Developing this edge under $M_p$ yields the displayed family. Since $e$ is an edge of the loopless multigraph $G_p^*$, Lemma~\ref{lem:p1-scalar-stabilizer} shows that its stabilizer in $M_p$ is trivial, so the $|M_p|$ displayed edges are distinct and exhaust the $M_p$-orbit.
\end{proof}

For each oriented edge $e$ of $G_p^*$ from $\mathcal T(a)$ to $\mathcal T(b)$, let $\phi_e:M_p\to M_p$ be the translation $\phi_e(x):=x+c_e$, where $c_e\in M_p$ is given by Lemma~\ref{lem:p1-ordinary-lift}; reversing the orientation of $e$ replaces $\phi_e$ by $\phi_e^{-1}$. Since $\phi_e$ is a translation of $M_p$, for every $S\subseteq M_p$ and $t\in M_p$ we have
$$
\phi_e(M_p\setminus S)=M_p\setminus\phi_e(S),
\qquad
\phi_e(S+t)=\phi_e(S)+t.
$$
Consequently, transport along any path in $G_p^*$ acts on subsets of $M_p$ by translation: it preserves complements, coset transversals of $W$, the condition $S\cap(S+\tau)=\varnothing$, and $1$-factors in the Cayley graph on $M_p$ generated by any fixed $\delta\in M_p$.

\begin{lemma}\label{lem:p1-branch-lift}
Let $p>13$ be a prime with $p\equiv1\pmod{12}$. After shifting the index origin of $\mathcal V(-\zeta)$ by a suitable element of $M_p$, the edges of $\mathcal K_p$ belonging to the affine trapezoid orbit joining $\mathcal T(-\zeta)$ and $\mathcal T(\zeta)$ are precisely
$$
\bigl\{\{\mathcal T(\zeta)_{x+W},\mathcal T(-\zeta)_x\}:x\in M_p\bigr\}.
$$
In particular, each vertex in $\mathcal V(\zeta)$ is incident with exactly three of these edges.
\end{lemma}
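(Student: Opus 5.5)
We mirror the proof of Lemma~\ref{lem:p1-ordinary-lift}, with the one change that the endpoint in $\mathcal V(\zeta)$ has stabilizer $W$ instead of being trivial.

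\textbf{Step 1: identify the edge orbit and check its stabilizer.} The inverse pair $\{-\zeta,-\zeta^{-1}\}\subseteq H(-\zeta)$ has negative $H(\zeta)$. So, by Lemma~\ref{lem:trapezoid-loop} and the discussion preceding Lemma~\ref{lem:p1-degrees}, the affine trapezoid orbit $\mathcal P(-\zeta)$ joins $\mathcal T(-\zeta)$ and $\mathcal T(\zeta)$. Lemma~\ref{lem:trapezoid-admissible} shows it is admissible, since $\pm\zeta\notin\{0,\pm1,\pm2,\pm2^{-1}\}$; these coincidences occur only for $p\le 7$. This orbit is not the loop orbit at $\mathcal T(\mathrm i)$, because $\mathcal T(\zeta)\neq\mathcal T(\mathrm i)$: the elements of $H(\mathrm i)$ do not satisfy $x^2-x+1=0$ when $p>13$, as in Lemma~\ref{lem:p1-degrees}. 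Lemma~\ref{lem:p1-scalar-stabilizer} then gives every edge $e$ of $\mathcal K_p$ in this affine orbit a trivial stabilizer in $M_p$.

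\textbf{Step 2: develop one edge under $M_p$.} Pick one such edge. Its endpoint in $\mathcal V(-\zeta)$ is some $\mathcal T(-\zeta)_{x_0}$, and its endpoint in $\mathcal V(\zeta)$ is some $\mathcal T(\zeta)_{y_0+W}$. Shift the index origin of $\mathcal V(-\zeta)$ so that $x_0=y_0$. Lemma~\ref{lem:multiplier-quotient} says the edges of $\mathcal K_p$ lying in this affine orbit form a single $M_p$-orbit. Acting by $x-y_0$ sends the chosen edge to $\{\mathcal T(\zeta)_{x+W},\mathcal T(-\zeta)_x\}$. Because stabilizers are trivial, these $|M_p|$ edges are pairwise distinct and make up the whole orbit.

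\textbf{Step 3: count incidences at $\mathcal V(\zeta)$.} A vertex $\mathcal T(\zeta)_{c+W}$ lies on the edge indexed by $x$ exactly when $x\in c+W$. That gives $|W|=3$ edges, and they are distinct, with distinct other endpoints $\mathcal T(-\zeta)_x$.

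\textbf{Main obstacle.} The delicate point is that the endpoint map $\mathcal T(-\zeta)_x\mapsto \mathcal T(\zeta)_{x+W}$ is well defined and equivariant. This holds because $M_p$ acts on $\mathcal V(\zeta)$ through cosets of $W$. We must also confirm that each vertex of $\mathcal V(-\zeta)$ lies on exactly one edge of this orbit, which holds because the edge stabilizer is trivial and $\mathcal V(-\zeta)$ is a regular $M_p$-set. As a consistency check, count triples: each block of $\mathcal P(-\zeta)$ contains two triples of $\mathcal T(\zeta)$, and $|\mathcal P(-\zeta)|=p(p-1)/2$ by the stabilizer argument of Lemma~\ref{lem:p5-trapezoid-count}. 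That gives $p(p-1)$ triple-incidences against $|\mathcal T(\zeta)|=p(p-1)/3$, so each triple is covered three times, matching the threefold incidence at each vertex of $\mathcal V(\zeta)$.
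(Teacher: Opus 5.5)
Your proof is correct and is essentially the paper's argument: fix one edge of $\mathcal K_p$ in this affine trapezoid orbit, normalize the index of its $\mathcal V(-\zeta)$-endpoint, use Lemma~\ref{lem:p1-scalar-stabilizer} to get a trivial edge stabilizer, develop the edge under $M_p$, and read off the threefold incidence at $\mathcal V(\zeta)$ from $|W|=3$. One small point: $\mathcal T(\zeta)\neq\mathcal T(\mathrm{i})$ needs neither Lemma~\ref{lem:p1-degrees} nor $p>13$, since it follows at once from $|H(\zeta)|=2\neq6=|H(\mathrm{i})|$; the paper handles this step more directly, noting that the orbit has distinct endpoints $\mathcal T(-\zeta)\neq\mathcal T(\zeta)$ and so cannot be the loop orbit at $\mathcal T(\mathrm{i})$.
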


\begin{proof}
Choose an edge in this affine trapezoid orbit incident with $\mathcal T(\zeta)_W$. By shifting the indexing of $\mathcal V(-\zeta)$, we may assume that its other endpoint is $\mathcal T(-\zeta)_0$. Since $\mathcal T(-\zeta)\neq\mathcal T(\zeta)$, Lemma~\ref{lem:p1-scalar-stabilizer} shows that the stabilizer of this edge in $M_p$ is trivial. Developing the edge under $M_p$ yields the displayed family, and each coset $x+W$ contains three elements of $M_p$.
\end{proof}

From now on, we fix the indexing of $\mathcal V(-\zeta)$ so that Lemma~\ref{lem:p1-branch-lift} holds, and compute all edge translations $\phi_e$ with respect to this choice.

\begin{lemma}\label{lem:p1-square-lift}
Let $p>13$ be a prime with $p\equiv1\pmod{12}$. The affine trapezoid orbit omitted as a loop at $\mathcal T(\mathrm{i})$ forms a $1$-factor of $\mathcal K_p$ on $\mathcal V(\mathrm{i})$, whose edges pair $\mathcal T(\mathrm{i})_x$ with $\mathcal T(\mathrm{i})_{x+\tau}$ for $x\in M_p$.
\end{lemma}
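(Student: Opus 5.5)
The approach is to work with the explicit representative trapezoid $P(\mathrm{i})=\{0,1,\mathrm{i},1+\mathrm{i}\}$. Its affine orbit is the one omitted as a loop at $\mathcal T(\mathrm{i})$: by Lemma~\ref{lem:trapezoid-loop} we have $\mathcal T(\mathrm{i})=\mathcal T(-\mathrm{i})$, and by Lemma~\ref{lem:trapezoid-admissible} all four triples of $P(\mathrm{i})$ are irregular, since $\mathrm{i}\notin\{\pm2,\pm2^{-1}\}$ for $p>5$.

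\textbf{Step 1: the stabilizer.} Let $e_0:=\mathcal P_{\pm}(P(\mathrm{i}))$ be the corresponding edge of $\mathcal K_p$. Centering $P(\mathrm{i})$ at $0$ gives $\{\pm u,\pm v\}$ with $v=\mathrm{i}u$, so multiplication by $\mathrm{i}$ preserves this centred set. Hence $\tau$ fixes $e_0$. By Lemma~\ref{lem:p1-scalar-stabilizer}, the stabilizer of $e_0$ in $M_p$ is exactly $\{0,\tau\}$, so the $M_p$-orbit of $e_0$ consists of $(p-1)/4$ distinct edges.

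\textbf{Step 2: the endpoints of $e_0$.} By Definition~\ref{def:full-kohler-graph}, $e_0$ joins the vertices $\mathcal T_{\pm}(\{0,1,\mathrm{i}\})$ and $\mathcal T_{\pm}(\{0,1,1+\mathrm{i}\})$, and both lie in $\mathcal V(\mathrm{i})$. These endpoints are distinct by Lemma~\ref{lem:full-kohler-incidence}. The key point is that $\tau$ fixes $e_0$ but $\tau\neq0$ acts freely on $\mathcal V(\mathrm{i})$, because $\mathrm{i}\in X_p^*$ and the action there is regular. So $\tau$ cannot fix either endpoint, and therefore it swaps them. Writing one endpoint as $\mathcal T(\mathrm{i})_{x_0}$, the other is $\mathcal T(\mathrm{i})_{x_0+\tau}$.

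\textbf{Step 3: developing under $M_p$.} Applying $y\in M_p$ to $e_0$ gives the edge $\{\mathcal T(\mathrm{i})_{x_0+y},\mathcal T(\mathrm{i})_{x_0+y+\tau}\}$. As $y$ ranges over $M_p$, these are all the pairs $\{\mathcal T(\mathrm{i})_x,\mathcal T(\mathrm{i})_{x+\tau}\}$, each obtained twice, namely from $y$ and $y+\tau$. This agrees with the orbit size $(p-1)/4$ from Step 1. Since $x\mapsto x+\tau$ is a fixed-point-free involution of $M_p$, these pairs partition $\mathcal V(\mathrm{i})$. So the edges form a $1$-factor of the subgraph of $\mathcal K_p$ induced on $\mathcal V(\mathrm{i})$.

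\textbf{Step 4: the orbit is a single $M_p$-orbit.} By Lemma~\ref{lem:multiplier-quotient}, the affine trapezoid orbit $\mathcal P(\mathrm{i})$ is exactly the union of the edges in the $M_p$-orbit of $e_0$. So no other edges of $\mathcal K_p$ belong to this affine orbit.

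\textbf{Main obstacle.} The main point to check is Step 2, that the two endpoints of $e_0$ really lie in $\mathcal V(\mathrm{i})$ and are exchanged rather than fixed by $\tau$. I would first confirm membership in $\mathcal V(\mathrm{i})$ directly: $\{0,1,1+\mathrm{i}\}$ normalises to parameter $-\mathrm{i}\in H(\mathrm{i})$. Freeness of the $M_p$-action on $\mathcal V(\mathrm{i})$ then rules out $\tau$ fixing an endpoint.
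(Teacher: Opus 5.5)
Your proof is correct and follows the paper's argument. Both show that $\tau$ fixes the edge: the paper checks this directly via $\mathrm{i}P(\mathrm{i})=P(\mathrm{i})-1$, where you centre the trapezoid instead (for the natural centring $\mathrm{i}u=-v$, which is harmless since the set is $\{\pm u,\pm v\}$). Then Lemma~\ref{lem:p1-scalar-stabilizer} gives the stabilizer $\{0,\tau\}$, freeness of $M_p$ on $\mathcal V(\mathrm{i})$ forces $\tau$ to swap the two endpoints, and developing under $M_p$ gives the pairing $x\leftrightarrow x+\tau$; your extra checks only make explicit what the paper leaves implicit.
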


\begin{proof}
Multiplication by $\mathrm{i}$ sends $P(\mathrm{i})$ to $P(\mathrm{i})-1$, so its class $\tau\in M_p$ fixes the corresponding edge of $\mathcal K_p$. By Lemma~\ref{lem:p1-scalar-stabilizer}, the edge stabilizer in $M_p$ is $\{0,\tau\}$. Since $M_p$ acts freely on $\mathcal V(\mathrm{i})$, the involution $\tau$ exchanges the two endpoints of this edge, so developing it under $M_p$ pairs $\mathcal T(\mathrm{i})_x$ with $\mathcal T(\mathrm{i})_{x+\tau}$.
\end{proof}

\begin{lemma}\label{lem:p1-golden-lift}
Let $p>13$ be a prime with $p\equiv1\pmod{12}$ such that $x^2+x-1=0$ has a root $\rho\in\mathbb F_p$. Then the edges of $\mathcal K_p$ belonging to the affine trapezoid orbit omitted as a loop at $\mathcal T(\rho)$ are
$$
\bigl\{\{\mathcal T(\rho)_x,\mathcal T(\rho)_{x+\delta_\rho}\}:x\in M_p\bigr\}
$$
for some $\delta_\rho\in M_p$ satisfying $\delta_\rho\neq0$ and $2\delta_\rho\neq0$.
\end{lemma}

\begin{proof}
Take $\mathcal T_{\pm}(\{0,1,\rho\})$ to have index $0$ in $\mathcal V(\rho)$. The edge represented by $P(\rho)$ joins it to $\mathcal T_{\pm}(\{0,1,1+\rho\})$. Since $\rho(1+\rho)=1$, multiplication by $\rho$ sends $\{0,1,1+\rho\}$ to $\{0,\rho,1\}$, so the other endpoint has index $\delta_\rho:=[\rho^{-1}]\in M_p$. Shifting the index origin of $\mathcal V(\rho)$ does not alter this difference, and developing the edge under $M_p$ gives the displayed family.

If $\delta_\rho=0$, then $\rho=\pm1$, which is incompatible with $\rho^2+\rho-1=0$. If $2\delta_\rho=0$, then $\rho^2=\pm1$; here $\rho^2=1$ gives $\rho=0$, while $\rho^2=-1$ gives $\rho=2$ and $p=5$, contradicting $p>13$.
\end{proof}

By Lemma~\ref{lem:p1-branch-lift}, any subset $R\subseteq M_p$ containing one representative from each coset of $W$ selects a matching between $\{\mathcal T(-\zeta)_x:x\in R\}$ and all of $\mathcal V(\zeta)$.

\begin{lemma}\label{lem:p1-transversal}
Let $p$ be a prime with $p\equiv1\pmod{12}$, and let $\delta\in M_p\setminus\{0\}$. There exists a coset transversal $R\subseteq M_p$ of $W$ such that the graph on $M_p\setminus R$ with edges $\{x,x+\delta\}$ has a $1$-factor.
\end{lemma}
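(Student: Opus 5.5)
Identify $M_p$ with $\mathbb Z_n$, where $n=(p-1)/2$. Since $p\equiv1\pmod{12}$, $n\equiv0\pmod 6$, and $W=\{0,n/3,2n/3\}$. The graph on $M_p$ with edges $\{x,x+\delta\}$ is the Cayley graph $\operatorname{Cay}(\mathbb Z_n,\{\pm\delta\})$. Let $d=\gcd(\delta,n)$ and $k=n/d$ be the order of $\delta$. Then this graph is a disjoint union of $d$ cycles of length $k$, one on each coset of $\langle\delta\rangle$, and it degenerates to $n/2$ disjoint edges when $k=2$, i.e. $\delta=\tau$. In each case I will choose $R$ so that $M_p\setminus R$ induces, on every coset of $\langle\delta\rangle$, a disjoint union of paths with an even number of vertices each (or a full even cycle). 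Such a union clearly has a $1$-factor.

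Case analysis on $W$ versus $\langle\delta\rangle$:

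\textbf{Case $3\nmid k$.} Then $W\cap\langle\delta\rangle=\{0\}$. The three elements of each $W$-coset lie in three distinct $\langle\delta\rangle$-cosets, and $W$ permutes the $\langle\delta\rangle$-cosets in orbits of size $3$. Group the $d$ cosets $C_j=j+\langle\delta\rangle$ into triples $C_j,\,C_j+n/3,\,C_j+2n/3$. A transversal of $W$ can be taken as the union of the cosets $C_j$, one from each triple. Then $M_p\setminus R$ is a union of whole $\langle\delta\rangle$-cosets. Each is a cycle of length $k$, which has a $1$-factor if $k$ is even, or is an edge if $k=2$. If $k$ is odd, then $d=n/k$ is even, since $n$ is even and $k$ is odd. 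In that case, instead of whole cosets, I take $R$ to contain one vertex from each of two cosets in the same triple plus appropriately chosen elements elsewhere. Concretely, from the triple $C,\,C+n/3,\,C+2n/3$, I would choose $R$ to contain $x_0+i\delta$ for $i\in\{0,\dots,k-1\}$ distributed over the three cosets. Each $W$-coset $\{y,y+n/3,y+2n/3\}$ must get exactly one element, and each remaining coset cycle must lose an odd number of vertices, leaving even paths. Taking one vertex from $C+n/3$, one from $C+2n/3$, and the remaining $k-2$ (odd) from $C$ at alternating positions would work. Since $k$ is odd, the $k-2$ removed vertices from $C$ can be chosen pairwise non-adjacent with gaps of even length. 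I expect this parity bookkeeping to be the main obstacle.

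\textbf{Case $3\mid k$.} Then $W\subseteq\langle\delta\rangle$, generated by $(k/3)\delta$. Each cycle of length $k$ is a union of $k/3$ cosets of $W$, arranged as positions $i, i+k/3, i+2k/3$ on the cycle. I pick, from each cycle, the transversal $\{x+i\delta:0\le i<k/3\}$, i.e. a consecutive arc of length $k/3$. The complement is then a path on $2k/3$ vertices, which is even, so it has a $1$-factor. This covers every $\delta$ with $3\mid\operatorname{ord}(\delta)$, including $k=3$, where the complement is a single edge.

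Finally, I will assemble $R$ as the union over all cosets/triples and check that it meets every $W$-coset exactly once. In Case 1 with odd $k$, I must verify carefully that the chosen scattered positions give a genuine transversal, namely that positions taken in $C$, $C+n/3$ and $C+2n/3$ cover each residue $i$ modulo $k$ exactly once. I plan to first remove alternate positions $i=0,2,\dots,k-3$ from $C$, then $i=k-2$ from $C+n/3$ and $i=k-1$ from $C+2n/3$. Each remaining component in each of the three cycles then has even order.
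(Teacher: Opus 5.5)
Your case split ($3\mid k$ versus $3\nmid k$, and within the latter $k$ even versus $k$ odd) is the paper's. The subcases $3\mid k$ (a consecutive arc of length $k/3$ on each cycle) and $3\nmid k$ with $k$ even (one whole $\langle\delta\rangle$-coset from each $W$-triple of cosets) are correct.

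\textbf{The gap.} It lies in the remaining subcase $3\nmid k$ with $k$ odd, so $k\ge5$. Your concrete plan selects positions $0,2,\dots,k-3$ in $C$, position $k-2$ in $C+n/3$, and position $k-1$ in $C+2n/3$. It fails in two ways.
\begin{enumerate}
\item Only $(k+3)/2$ positions are selected in total. The $W$-cosets $x_0+i\delta+W$ with $i$ odd and $1\le i\le k-4$ receive no element, so $R$ is not a transversal.
\item The unselected vertices $x_0+i\delta$ of $C$ with $i\in\{1,3,\dots,k-4\}$ have both neighbours removed. They are isolated odd components, so no $1$-factor exists.
\end{enumerate}
Already for $k=5$, column $1$ is empty and $x_0+\delta$ is isolated.

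Your earlier heuristic cannot be rescued either. Take $k-2$ pairwise nonadjacent vertices on a $k$-cycle with all gaps even, hence of length at least $2$. Then $k-(k-2)\ge2(k-2)$, which forces $k\le3$. Note also that alternate positions give gaps of length $1$, which is odd, contradicting your own description.

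\textbf{The fix.} Take the $k-2$ vertices of $C$ consecutively rather than alternately:
\begin{itemize}
\item select $x_0+i\delta$ for $0\le i\le k-3$ in $C$;
\item select $x_0+n/3+(k-2)\delta$ in $C+n/3$;
\item select $x_0+2n/3+(k-1)\delta$ in $C+2n/3$.
\end{itemize}
Every column $i$ is then hit exactly once. $C$ leaves the single edge $\{x_0+(k-2)\delta,\,x_0+(k-1)\delta\}$. Each of the other two cycles loses one vertex and leaves a path on $k-1$ vertices, which is even. Repeating this on every $W$-triple of $\langle\delta\rangle$-cosets gives the required $R$. This is exactly the paper's construction: row $0$ in the first $q-2$ columns, row $1$ in column $q-2$, and row $2$ in column $q-1$.
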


\begin{proof}
The connected components of the Cayley graph on $M_p$ with edges $x\sim x+\delta$ are the cosets of $K:=\langle\delta\rangle$. If $\operatorname{ord}(\delta)=2$, each component is a single edge, and translation by $W$ permutes these components in triples; choosing one component from each triple gives a transversal $R$ of $W$ whose complement is a union of disjoint edges. Assume henceforth that $q:=\operatorname{ord}(\delta)\ge3$, so each component is a cycle of length $q$.

If $W\le K$, then $3\mid q$. On each cycle, label the vertices consecutively along $\delta$; choosing the first $q/3$ consecutive vertices gives one representative from each coset of $W$ in that cycle, and leaves a path on $2q/3$ vertices, which has a $1$-factor since $2q/3$ is even.

If $W\cap K=\{0\}$, the cycles occur in triples permuted by $W$. Label the three cycles in a triple by rows $0,1,2$ with origins in the same coset of $W$, so each column is a coset of $W$. If $q$ is even, put all of row $0$ into $R$; the remaining two rows are even cycles. If $q$ is odd, choose row $0$ in the first $q-2$ columns, row $1$ in column $q-2$, and row $2$ in column $q-1$; the unselected vertices form one adjacent pair in row $0$ and a path on $q-1$ vertices in each of rows $1$ and $2$, each of which has a $1$-factor since $q-1$ is even.
\end{proof}

\begin{lemma}\label{lem:p1-path-lift}
Let $p>13$ be a prime with $p\equiv1\pmod{12}$, and let $U=\mathcal T(\mathrm{i})$, or $U=\mathcal T(\rho)$ when $x^2+x-1=0$ has a root $\rho\in\mathbb F_p$. If both $G_p^*-\mathcal T(-\zeta)$ and $G_p^*-U$ have $1$-factors, then $\mathcal K_p$ has a $1$-factor.
\end{lemma}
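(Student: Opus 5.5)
We build the $1$-factor of $\mathcal K_p$ fibrewise over $G_p^*$: every vertex $\mathcal T(a)$ of $G_p^*$ corresponds to the fibre $\mathcal V(a)$ of $|M_p|$ vertices, and $\mathcal T(\zeta)$ to the fibre $\mathcal V(\zeta)$ of $|M_p|/3$ vertices. For each vertex $\mathcal T(a)$ of $G_p^*$ we partition $\mathcal V(a)$ into pieces, and each piece is matched along a single edge of $G_p^*$ (via the translation $\phi_e$ of Lemma~\ref{lem:p1-ordinary-lift}), along the branch orbit to $\mathcal V(\zeta)$ (Lemma~\ref{lem:p1-branch-lift}), or internally along the omitted loop at $U$ (Lemma~\ref{lem:p1-square-lift} or Lemma~\ref{lem:p1-golden-lift}).

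Let $\mathcal M_1$ be a $1$-factor of $G_p^*-\mathcal T(-\zeta)$ and $\mathcal M_2$ a $1$-factor of $G_p^*-U$. Then $\mathcal M_1\mathbin{\triangle}\mathcal M_2$ consists of alternating cycles and a single alternating path $Q$ from $\mathcal T(-\zeta)$ to $U$. Every vertex of $G_p^*$ off $Q$ is covered by $\mathcal M_1$, so we take all edges of $\mathcal K_p$ in the orbits of $\mathcal M_1\setminus E(Q)$. By Lemma~\ref{lem:p1-ordinary-lift} each such orbit is a perfect matching between the two entire fibres, which covers every fibre off $Q$.

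On $Q=(\mathcal T(-\zeta)=v_0,v_1,\dots,v_k=U)$, the first edge lies in $\mathcal M_2$ and the last lies in $\mathcal M_1$, so $k$ is even. We choose a transversal $R$ of $W$ by Lemma~\ref{lem:p1-transversal}, with $\delta=\tau$ when $U=\mathcal T(\mathrm{i})$ and $\delta=\delta_\rho$ when $U=\mathcal T(\rho)$. We match $\{\mathcal T(-\zeta)_x:x\in R\}$ to all of $\mathcal V(\zeta)$ by Lemma~\ref{lem:p1-branch-lift}. Next we transport $S_0:=M_p\setminus R$ along $Q$, setting $S_{j+1}:=\phi_{v_jv_{j+1}}(M_p\setminus S_j)$ with alternating complements. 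This means that on edge $v_jv_{j+1}$ we use only the part of its orbit joining $\mathcal T(v_j)_x$ with $x\in S_j$ to $\phi(x)$. Each interior fibre is then split exactly into the part matched backwards and the part matched forwards, since the two index sets are complementary and $\phi$ is a bijection.

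At the end, the set of indices of $\mathcal V(U)$ matched along $Q$ is $M_p$ minus a translate of $R$, and the unmatched set is a translate $R+t$. The alternating complementation reverses twice per pair of edges, and $k$ is even, so with one consistent convention the last leftover set is $R+t$ up to complementing. Because the transport acts by translations, which preserve the property of having a $1$-factor in the Cayley graph generated by $\delta$, the $1$-factor on $M_p\setminus R$ from Lemma~\ref{lem:p1-transversal} translates to the leftover set. The omitted loop orbit at $U$ then matches it internally: we take pairs $\{x,x+\tau\}$ by Lemma~\ref{lem:p1-square-lift}, or $\{x,x+\delta_\rho\}$ by Lemma~\ref{lem:p1-golden-lift}.

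The main obstacle is the bookkeeping on $Q$. We must check that the parity of $k$ makes the leftover set at $U$ a translate of $M_p\setminus R$ rather than of $R$. If it comes out the other way, we instead start with $S_0:=R$ and reverse which side is complemented, and we must check that $Q$ is a simple path, so that no fibre is used twice. We also verify that the edges obtained are distinct $\mathcal K_p$-edges, using the trivial stabilizers from Lemma~\ref{lem:p1-scalar-stabilizer}, and that every vertex of $\mathcal V(\zeta)$, $\mathcal V(-\zeta)$ and $\mathcal V(U)$ is covered exactly once.
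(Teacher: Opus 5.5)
Your construction is the paper's: the even-length alternating path $Q$ in $\mathcal M_1\mathbin{\triangle}\mathcal M_2$, a $W$-transversal $R$ from Lemma~\ref{lem:p1-transversal}, transport of index sets along $Q$ by the translations $\phi_e$ with alternating complements, the branch orbit of Lemma~\ref{lem:p1-branch-lift} at $\mathcal T(-\zeta)$, the omitted loop orbit at $U$, and whole orbits of $\mathcal M_1\setminus E(Q)$ elsewhere. The paper pulls its transversal back from the $U$ end via $R_{j-1}=\phi_{e_j}^{-1}(R_j)$, whereas you push forward from the $\mathcal T(-\zeta)$ end. This difference is immaterial, since every set involved is a translate of $R$ or of $M_p\setminus R$. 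For the last step, say explicitly that every vertex of $Q$ other than $v_0$ has its $\mathcal M_1$-edge on $Q$. Then the orbits of $\mathcal M_1\setminus E(Q)$ never meet a fibre on $Q$. Simplicity of $Q$ is automatic.

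The step you leave open as the ``main obstacle'' is, however, misstated, and your fallback would fail.

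\begin{itemize}
\item With your own recursion $S_0=M_p\setminus R$ and $S_{j+1}=\phi_{v_jv_{j+1}}(M_p\setminus S_j)$, the set $S_j$ is a translate of $M_p\setminus R$ for even $j$ and of $R$ for odd $j$.
\item At $v_k=U$, the edge $v_{k-1}v_k$ covers $\phi_{v_{k-1}v_k}(S_{k-1})$. This is a translate of $R$ because $k-1$ is odd.
\item So the uncovered set at $U$ is $S_k$, a translate of $M_p\setminus R$, not $R+t$ as you wrote. This is exactly the set that inherits the $\{x,x+\delta\}$ matching from Lemma~\ref{lem:p1-transversal}, and the proof closes.
\end{itemize}

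The evenness of $k$, which you already proved, is precisely what makes the two ends compatible. There is no freedom to swap roles. Starting instead with $S_0:=R$ would leave $M_p\setminus R$ uncovered at $\mathcal T(-\zeta)$. That set meets every coset of $W$ twice, so the branch orbit would cover each vertex of $\mathcal V(\zeta)$ twice. Replace the hedge and the fallback by this one-line parity computation.
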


\begin{proof}
Let $\mathcal M_{-\zeta}$ and $\mathcal M_U$ be $1$-factors of $G_p^*-\mathcal T(-\zeta)$ and $G_p^*-U$, respectively. Their symmetric difference contains a unique alternating path $v_0e_1v_1\cdots e_rv_r$ from $v_0=\mathcal T(-\zeta)$ to $v_r=U$, where $e_1\in\mathcal M_U$ and $e_r\in\mathcal M_{-\zeta}$, so $r$ is even.

Set $\delta:=\tau$ if $U=\mathcal T(\mathrm{i})$, and $\delta:=\delta_\rho$ (from Lemma~\ref{lem:p1-golden-lift}) if $U=\mathcal T(\rho)$. Choose a transversal $R_r\subseteq M_p$ of $W$ by Lemma~\ref{lem:p1-transversal} for this $\delta$. Orient each $e_j$ from $v_{j-1}$ to $v_j$, and define $R_{j-1}:=\phi_{e_j}^{-1}(R_j)$ recursively for $j=r,r-1,\ldots,1$. From the affine trapezoid orbit represented by $e_j$, select the edges of $\mathcal K_p$ with initial indices at $v_{j-1}$ in $M_p\setminus R_{j-1}$ when $j$ is odd, and in $R_{j-1}$ when $j$ is even. At each internal vertex $v_j$ ($1\le j<r$), the two incident path edges cover the indices in $R_j$ and $M_p\setminus R_j$, respectively.

Since $r$ is even, $e_r$ covers the vertices represented by $U$ with indices in $R_r$, while Lemma~\ref{lem:p1-transversal} matches the remaining indices $M_p\setminus R_r$ via pairs $\{x,x+\delta\}$, which correspond to edges of $\mathcal K_p$ by Lemma~\ref{lem:p1-square-lift} or~\ref{lem:p1-golden-lift}. At $v_0=\mathcal T(-\zeta)$, the edge $e_1$ leaves the indices in $R_0$ uncovered; since $R_0$ is a translate of $R_r$, it is a transversal of $W$, so the edges $\{\mathcal T(\zeta)_{x+W},\mathcal T(-\zeta)_x\}$ ($x\in R_0$) from Lemma~\ref{lem:p1-branch-lift} cover all of $\mathcal V(\zeta)$ and the remaining vertices of $\mathcal V(-\zeta)$. Finally, every vertex on the path other than $v_0$ has its unique $\mathcal M_{-\zeta}$-edge on the path, so every edge of $\mathcal M_{-\zeta}$ outside the path has both endpoints outside the path; taking all corresponding edges of $\mathcal K_p$ covers all remaining vertices.
\end{proof}

When $p=13$, the four trapezoid base blocks $\{0,1,3,4\}$, $\{0,2,5,7\}$, $\{0,2,6,9\}$, and $\{0,1,5,6\}$ yield a $1$-factor of $\mathcal K_{13}$: the four corresponding edges pair up the eight vertices of $\mathcal K_{13}$.

For $p>13$ satisfying Condition~\ref{cond:p1-bridge}, Lemma~\ref{lem:p1-path-lift} gives a $1$-factor of $\mathcal K_p$ when $x^2+x-1=0$ has no solution in $\mathbb F_p$. Suppose now that this equation has a root $\rho\in\mathbb F_p$. If $G_p^\circ-\mathcal T(-\zeta)-\mathcal T(\rho-1)$ has a $1$-factor, the same lemma applies with $U=\mathcal T(\rho)$, using the $1$-factors obtained in Subsection~\ref{subsec:p1-tutte}.

It remains to treat the case in which $G_p^\circ-\mathcal T(-\zeta)-\mathcal T(\rho-1)$ has no $1$-factor. The two compatible saturated paths supplied by Lemma~\ref{lem:p1-compatible-paths} will be lifted simultaneously. At their branch vertex, the two branches will use the index sets $R$ and $R+\tau$, so these sets must be disjoint.

The set $R$ must contain one representative from each coset of $W$, allowing the uncovered vertices represented by $\mathcal T(-\zeta)$ to be matched with the exceptional orbit. The union $R\cup(R+\tau)$ is invariant under addition of $\tau$, allowing the uncovered vertices represented by $\mathcal T(\mathrm{i})$ to be matched by the omitted loop orbit. Finally, the complement of $R$ must admit a matching using the omitted loop orbit at $\mathcal T(\rho)$. The following lemma supplies a transversal with these properties and will be applied with $\delta=\delta_\rho$.

\begin{lemma}\label{lem:p1-separated-transversal}
Let $p$ be a prime with $p\equiv1\pmod{12}$, and let $\delta\in M_p$ satisfy $\delta\neq0$ and $2\delta\neq0$. There exists a coset transversal $R\subseteq M_p$ of $W$ such that $R\cap(R+\tau)=\varnothing$ and the graph on $M_p\setminus R$ with edges $\{x,x+\delta\}$ has a $1$-factor.
\end{lemma}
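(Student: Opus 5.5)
We work in the cyclic group $M_p$ of order $n=(p-1)/2\equiv0\pmod6$, with $W=\langle n/3\rangle$, $\tau=n/2$, $K:=\langle\delta\rangle$ and $q:=\operatorname{ord}(\delta)\ge3$ (by $\delta\ne0$, $2\delta\ne0$). As in Lemma~\ref{lem:p1-transversal}, the graph on $M_p$ with edges $\{x,x+\delta\}$ is a disjoint union of $q$-cycles, namely the cosets of $K$. The task is to pick $R$ with three properties:
\begin{itemize}
\item $R$ meets every coset of $W$ exactly once;
\item $R\cap(R+\tau)=\varnothing$;
\item on every $K$-cycle, the vertices outside $R$ induce paths with an even number of vertices each, or an even cycle if the cycle misses $R$.
\end{itemize}
The plan is to repeat the row/column bookkeeping of Lemma~\ref{lem:p1-transversal}, now keeping track of the extra involution $\tau$. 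The subgroup $V:=W+\langle\tau\rangle$ has order $6$, and the case split follows $V\cap K$.

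\emph{Case $V\cap K=\{0\}$.} Here $V$ permutes the $K$-cycles freely in orbits of size $6$. Index the six cycles of an orbit by $V=W\cup(\tau+W)$, with origins in one $V$-coset, so that each column is a $V$-coset. For the rows $W$, use the pattern of Lemma~\ref{lem:p1-transversal}. If $q$ is even, take all of row $0$. If $q$ is odd, take row $0$ in columns $0,\dots,q-3$, row $w$ in column $q-2$, and row $2w$ in column $q-1$, where $W=\{0,w,2w\}$. For the rows $\tau+W$, use the same pattern with rows cyclically shifted by $w$ (rows $\tau+w$, $\tau+2w$, $\tau$ in the respective column ranges). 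In each column, $R+\tau$ then occupies row $\tau+kw$ while $R$ occupies row $\tau+(k+1)w$, so $R\cap(R+\tau)=\varnothing$. The leftover paths have even order exactly as in Lemma~\ref{lem:p1-transversal}.

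\emph{Case $\tau\in K$, $W\cap K=\{0\}$.} Now $q$ is even, $\tau=(q/2)\delta$, and $W$ permutes the cycles in triples with rows $0,1,2$. I would spread each column over the three rows so that no row contains two positions antipodal on the cycle. For $q\equiv0\pmod4$, take row $0$ in columns $0,\dots,q/2-1$ and row $1$ in columns $q/2,\dots,q-1$. Then $R+\tau$ lands in the complementary halves, the leftover pieces are two paths of $q/2$ (even) vertices, and row $2$ remains a full even cycle. For $q\equiv2\pmod4$, I would shift one block of the split by one column and place a single column in row $2$, which keeps all arcs even; a short explicit check settles this.

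\emph{Cases $W\le K$.} Each $K$-cycle is then $W$-invariant, and each coset of $W$ occurs on the cycle as $3$ equally spaced positions; choosing $q/3$ consecutive vertices gives a transversal leaving one path of $2q/3$ (even) vertices. If $\tau\notin K$, the cycles are paired by $\tau$. On one cycle of each pair, take the block starting at column $0$; on its partner, take the block starting at column $q/3$. Then $R+\tau$ avoids $R$, since on the partner $R+\tau$ occupies columns $0,\dots,q/3-1$. If $\tau\in K$, so that $6\mid q$, a single cycle must carry a block $B$ of $q/3$ vertices disjoint from $B+q/2$. Consecutive blocks already satisfy this, because $q/3<q/2$.

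The main obstacle is the careful parity check in each subcase, especially $\tau\in K$ with $q\equiv2\pmod4$. If the shifted pattern fails there, I would instead use a transversal made of two blocks of $q/6$ consecutive positions, placed at offsets $0$ and $q/3+q/6$ modulo $q/3$, and verify the arc lengths directly.
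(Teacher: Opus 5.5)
Outside one subcase, your constructions are the paper's. The two $W\le K$ cases match it, and so does the half-split for $q\equiv0\pmod4$. Your $V\cap K=\{0\}$ pattern is the paper's choice of rows $(0,\dots,0,1,2)$ and $(1,\dots,1,2,0)$ on a $\tau$-paired couple of $(W+K)$-cosets. The ``$q$ even'' branch of your first case is vacuous, since $\tau\notin K$ forces $q$ odd.

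\textbf{The gap.} There is a genuine gap in the subcase $\tau\in K$, $W\cap K=\{0\}$, $q=2h$ with $h$ odd, where $h\ge5$ and $3\nmid q$. Your sketch there cannot work.
\begin{enumerate}
\item Each row is an even $q$-cycle. Its unselected vertices can be perfectly matched along $x\sim x+\delta$ only if the row has an even number of selected columns. A single selected column in row $2$ leaves a path on $q-1$ vertices, which has odd order.
\item Let $S_r$ be the set of selected columns in row $r$. The condition $R\cap(R+\tau)=\varnothing$ is $S_r\cap(S_r+h)=\varnothing$, so $|S_r|\le h$, and by parity $|S_r|\le h-1$.
\item Since $|S_0|+|S_1|+|S_2|=2h$, every row must carry at least two selected columns. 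So no ``two halves plus one extra column'' pattern exists.
\item Your fallback with blocks of $q/6$ positions is not even defined here, because $3\nmid q$.
\end{enumerate}
You correctly flagged this as the main obstacle, but the proposal supplies no working construction for it.

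\textbf{The paper's fix.} Group the columns into adjacent pairs $\{2j,2j+1\}$ and give each pair a single row $\chi(j)$. This makes every leftover arc even automatically. Writing $h=2a+1$, translation by $\tau$ sends the two points of pair $j$ into pairs $j+a$ and $j-a$, in the same row. So disjointness from the $\tau$-translate is exactly a proper $3$-colouring of the graph $j\sim j+a$ on $\mathbb Z_h$. Since $\gcd(a,h)=1$, this graph is a single odd cycle, which is $3$-colourable.

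\textbf{An explicit alternative.} Put row $0$ on columns $0,\dots,h-2$, row $1$ on columns $h-1,\dots,2h-3$, and row $2$ on columns $2h-2,2h-1$. Columns $t$ and $t+h$ always receive different rows. The leftover arcs in rows $0,1,2$ have $h+1$, $h+1$ and $2h-2$ vertices, all even.
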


\begin{proof}
Set $K:=\langle\delta\rangle$ and $q:=|K|\ge3$. Each coset of $K$ is a cycle of length $q$ under $x\sim x+\delta$, and either $W\le K$ or $W\cap K=\{0\}$.

Suppose first that $W\le K$, so $3\mid q$ and $W=\langle(q/3)\delta\rangle$. If $q$ is even, then $\tau=(q/2)\delta$; in each coset $x+K$, selecting $\{x+t\delta:0\le t<q/3\}$ gives a transversal of the $W$-cosets in $x+K$ that is disjoint from its translate by $\tau$, and leaves a path on $2q/3$ vertices, which we match consecutively. If $q$ is odd, then $\tau\notin K$ pairs the cosets of $K$ as $x+K$ and $x+\tau+K$; setting $d:=q/3$ and selecting $\{x+t\delta:0\le t<d\}$ in the first cycle and $\{x+\tau+t\delta:d\le t<2d\}$ in the second gives a transversal of $W$ disjoint from its $\tau$-translate and leaves a path on $2d$ vertices in each cycle, which we match consecutively.

Now suppose that $W\cap K=\{0\}$, and fix a generator $\omega$ of $W$. On each coset $x+(W+K)$, use the coordinates
$$
v_{r,t}:=x+r\omega+t\delta,\qquad r\in\mathbb Z_3,\quad t\in\mathbb Z_q.
$$
For each fixed $r$ the points form a row, which is a $K$-cycle; for each fixed $t$ they form a column, which is a $W$-coset. Thus we must select exactly one point in each column.

If $q=2h$ is even, then $\tau=h\delta$, the unique element of order $2$ in $M_p$, and $v_{r,t}+\tau=v_{r,t+h}$. Partition the columns into adjacent pairs $\{2j,2j+1\}$ ($j\in\mathbb Z_h$). Choose a row $\chi(j)\in\mathbb Z_3$ for each pair, and define
$$
R_x:=\bigcup_{j\in\mathbb Z_h}\{v_{\chi(j),2j},v_{\chi(j),2j+1}\}.
$$
This selects one point per column, and its complement in $x+(W+K)$ has the $1$-factor
$$
\bigl\{\{v_{r,2j},v_{r,2j+1}\}:j\in\mathbb Z_h,\ r\in\mathbb Z_3\setminus\{\chi(j)\}\bigr\},
$$
since these pairs partition the unselected points and each pair is an edge of $x\sim x+\delta$.

If $h$ is even, translation by $\tau$ sends pair $j$ to pair $j+h/2$ in the same row; choosing $\chi(j)=0$ for $0\le j<h/2$ and $\chi(j)=1$ for $h/2\le j<h$ ensures that translated pairs use different rows, so $R_x\cap(R_x+\tau)=\varnothing$.

If $h=2a+1$ is odd, then $h\ge3$ because $q\neq2$. Translation by $\tau$ acts as
$$
v_{r,2j}+\tau=v_{r,2(j+a)+1},\qquad
v_{r,2j+1}+\tau=v_{r,2(j-a)},
$$
with column indices modulo $2h$ and pair indices modulo $h$; the second identity uses $a+1\equiv-a\pmod h$. Thus the two translated points from pair $j$ lie in pairs $j+a$ and $j-a$, respectively, and remain in row $\chi(j)$. Consequently,
$$
R_x\cap(R_x+\tau)=\varnothing
\quad\Longleftrightarrow\quad
\chi(j)\ne\chi(j+a),\ \chi(j)\ne\chi(j-a)\quad\text{for every }j\in\mathbb Z_h.
$$
These inequalities say exactly that $\chi$ properly colours the graph on $\mathbb Z_h$ with edges $j\sim j+a$. Since $\gcd(h,a)=1$, this graph is the single cycle $0,a,\ldots,(h-1)a$. Setting $\chi(ka)=k\bmod2$ for $0\le k\le h-2$ and $\chi((h-1)a)=2$, the colours alternate until the final vertex, whose colour differs from both neighbours. Thus $R_x\cap(R_x+\tau)=\varnothing$. Since translation by $\tau\in K$ preserves each coset $x+(W+K)$, taking the union of $R_x$ over all cosets gives $R$.

Finally, suppose that $q$ is odd. Then $|W+K|=3q$ is odd, so $\tau\notin W+K$ and translation by $\tau$ pairs its cosets. Use the coordinates $v_{r,t}=x+r\omega+t\delta$ in the first coset and $v'_{r,t}:=v_{r,t}+\tau$ in the second, so that translation by $\tau$ exchanges corresponding points. In columns $0,\ldots,q-1$, choose rows $(0,\ldots,0,1,2)$ in the first coset and $(1,\ldots,1,2,0)$ in the second. Each column has one selection, and corresponding columns use different rows, so no selected point is the $\tau$-translate of another. In the first coset, the unselected points in row $0$ form the adjacent pair in columns $q-2,q-1$, while each of rows $1$ and $2$ has one selected point and leaves a path on $q-1$ vertices, which we match consecutively. The second coset has the same matching with the rows cyclically permuted. Taking these selections and matchings over all paired cosets completes the proof.
\end{proof}

\begin{lemma}\label{lem:p1-multipath-lift}
Let $p>13$ be a prime with $p\equiv1\pmod{12}$ such that $x^2+x-1=0$ has a root $\rho\in\mathbb F_p$. Write $u:=\mathcal T(\mathrm{i})$, $b:=\mathcal T(-\zeta)$, $c:=\mathcal T(\rho-1)$, $r:=\mathcal T(\rho)$, and $G_p^\circ:=G_p^*-r$. Let $\mathcal M$ be a $1$-factor of $G_p^\circ$. If $G_p^\circ$ contains $\mathcal M$-saturated paths from $u$ to $b$ and from $u$ to $c$ that share an initial subpath from $u$ and have no other vertices in common, then $\mathcal K_p$ has a $1$-factor.
\end{lemma}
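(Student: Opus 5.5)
The plan is to lift the tree $Q_b\cup Q_c$ to $\mathcal K_p$ in the same way as in the proof of Lemma~\ref{lem:p1-path-lift}. At each vertex of $G_p^*$ on the tree, the incident tree edges will cover complementary translates of a fixed index pattern. The three boundary vertices $u$, $b$ and $r$ will then be completed by the omitted loop at $\mathcal T(\mathrm{i})$, the exceptional orbit $\mathcal T(\zeta)$, and the omitted loop at $\mathcal T(\rho)$, respectively. All other vertices are covered by taking the full $M_p$-orbits of the remaining $\mathcal M$-edges.

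First I fix the combinatorial shape. Let $w$ be the last vertex of the common initial subpath $Q_0$ from $u$. As noted in the proof of Lemma~\ref{lem:p1-compatible-paths}, shared vertices lie on common $\mathcal M$-edges. Since $Q_b$ and $Q_c$ begin with $\mathcal M$-edges, $Q_0$ starts and ends with edges of $\mathcal M$, so it has odd length $\ge1$. Consequently both branches $Q_b[w,b]$ and $Q_c[w,c]$ leave $w$ through non-$\mathcal M$ edges and end with $\mathcal M$-edges at $b$ and $c$. I then add the pendant edge $\{c,r\}$ of $G_p^*$. Next I apply Lemma~\ref{lem:p1-separated-transversal} with $\delta=\delta_\rho$, which is allowed by Lemma~\ref{lem:p1-golden-lift}. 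This gives a $W$-transversal $R$ with $R\cap(R+\tau)=\varnothing$ such that $M_p\setminus R$ has a $1$-factor of $\delta_\rho$-edges.

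The selection is made with the translations $\phi_e$. At $w$, the branch towards $b$ takes initial indices in $R$, and the branch towards $c$ takes initial indices in $R+\tau$. The incoming $\mathcal M$-edge of $Q_0$ takes the remaining set $S_w:=M_p\setminus(R\cup(R+\tau))$. These three sets partition $M_p$ precisely because $R\cap(R+\tau)=\varnothing$.

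Along each branch I alternate as in Lemma~\ref{lem:p1-path-lift}. If an edge covers a set $A$ at its far end, the next edge covers the complement of $A$ there. Because each $\phi_e$ is a translation, every set reached is a translate of $R$ or of its complement. At $b$, the final $\mathcal M$-edge therefore leaves uncovered a translate of $R$, which is again a $W$-transversal. Lemma~\ref{lem:p1-branch-lift} then matches these indices with all of $\mathcal V(\zeta)$.

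At $c$, the final $\mathcal M$-edge likewise leaves uncovered a translate of $R$. The pendant edge orbit $\{c,r\}$ is a translation bijection by Lemma~\ref{lem:p1-ordinary-lift}, so selecting it on those indices covers a translate $R+t$ at $r$. The complement $M_p\setminus(R+t)$ is then matched by $\delta_\rho$-edges from Lemma~\ref{lem:p1-golden-lift}, because translation preserves such $1$-factors.

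Going back from $w$ along $Q_0$, the sets alternate between translates of $S_w$ and of $R\cup(R+\tau)$. Since $Q_0$ has odd length, its first edge covers a translate of $S_w$ at $u$, and the uncovered set at $u$ is a translate of $R\cup(R+\tau)$. This set is $\tau$-invariant, so Lemma~\ref{lem:p1-square-lift} pairs it by edges $\{x,x+\tau\}$.

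The step I expect to need the most care is the bookkeeping at the branch vertex. There I must confirm that transporting $R$ and $R+\tau$ backwards from the two leaves can be arranged to give exactly this pair, up to a common translate. I will handle it by fixing the sets at $w$ first and defining every other set by forward transport. Since the choice at the leaves is free up to translation, nothing at $b$ or $c$ constrains the choice at $w$.

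Finally I must check that every vertex of $\mathcal K_p$ is covered exactly once. Each tree vertex other than $w$, $u$, $b$, $c$ and $r$ is covered by complementary sets from its two tree edges. Each vertex of $G_p^\circ$ off the tree has its $\mathcal M$-edge off the tree, so that edge's full orbit covers it exactly once. The vertices in $\mathcal V(\zeta)$ are covered exactly once by the chosen transversal.
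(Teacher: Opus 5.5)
Your proposal is correct and follows the paper's proof: the paper also partitions $M_p$ at the branch vertex into $M_p\setminus(R_*\cup(R_*+\tau))$, $R_*+\tau$, and $R_*$, transports these sets along $Q_0$, along the $b$-branch, and along the $c$-branch extended by $\{c,r\}$, and completes the matching at $u$, $b$, and $r$ using Lemmas~\ref{lem:p1-square-lift}, \ref{lem:p1-branch-lift}, and~\ref{lem:p1-golden-lift}. (The paper sends $R_*+\tau$ toward $b$ and $R_*$ toward $r$, the reverse of your choice, which makes no difference.) One caveat: the hypothesis allows $w=b$ or $w=c$, in which case one branch is empty, so your claim that both branches leave $w$ through non-$\mathcal M$ edges must be qualified; the same assignment still works, because the set reserved for the empty branch is then exactly the set left uncovered at $b$, or the set carried out of $c$ by the pendant edge $\{c,r\}$.
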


\begin{proof}
Let $v_*$ be the last vertex of the common initial subpath. Denote the common path from $u$ to $v_*$ by $Q_0$, the remaining subpath to $b$ by $Q_b$, and the remaining subpath to $c$, followed by the pendant edge $\{c,r\}$ of $G_p^*$, by $Q_\rho$. Each of $c$ and $r$ corresponds to $|M_p|$ vertices of $\mathcal K_p$, so Lemma~\ref{lem:p1-ordinary-lift} applies to $\{c,r\}$, which lies outside $\mathcal M$ since $r\notin V(G_p^\circ)$.

Both original $\mathcal M$-saturated paths start with the unique $\mathcal M$-edge at $u$, so $Q_0$ has positive length. Its last edge also belongs to $\mathcal M$: if neither path ends at $v_*$ and the last common edge were outside $\mathcal M$, both paths would continue along the same $\mathcal M$-edge incident with $v_*$. Thus every nonempty remaining path among $Q_b$ and $Q_\rho$ starts with an edge outside $\mathcal M$. The last edge of $Q_b$, when present, belongs to $\mathcal M$, while $Q_\rho$ ends with $\{c,r\}\notin\mathcal M$. We allow $v_*=b$, when $Q_b$ has length zero, and $v_*=c$, when $Q_\rho$ consists only of $\{c,r\}$.

Orient $Q_0$ from $u$ to $v_*$ and orient $Q_b$ and $Q_\rho$ away from $v_*$. Choose $R_*\subseteq M_p$ by Lemma~\ref{lem:p1-separated-transversal} with $\delta=\delta_\rho$ from Lemma~\ref{lem:p1-golden-lift}, and set $\widehat R_*:=R_*\cup(R_*+\tau)$. This union is disjoint, and $\widehat R_*+\tau=\widehat R_*$. Since $R_*\cap(R_*+\tau)=\varnothing$, the three sets
$$
M_p\setminus\widehat R_*,\qquad R_*+\tau,\qquad R_*
$$
form a partition of $M_p$.

Transport these sets along the three paths separately by taking successive inverse images of $\widehat R_*$ along $Q_0$, successive images of $R_*+\tau$ along $Q_b$, and successive images of $R_*$ along $Q_\rho$ under the edge translations $\phi_e$; denote the final image at $r$ along $Q_\rho$ by $R_\rho$. For an oriented edge $e$ from $v$ to $w$ on one of these three paths, let $S\subseteq M_p$ be the set carried by that path at $v$, so that $\phi_e(S)$ is the set carried by that path at $w$. If $e\notin\mathcal M$, select the edges of $\mathcal K_p$ in the affine orbit represented by $e$ whose indices at $v$ lie in $S$; if $e\in\mathcal M$, select those whose indices at $v$ lie in $M_p\setminus S$. Since $\phi_e$ preserves complements, Lemma~\ref{lem:p1-ordinary-lift} shows that the selected edges cover the indices in $\phi_e(S)$ (when $e\notin\mathcal M$) or $M_p\setminus\phi_e(S)$ (when $e\in\mathcal M$) at $w$.

At an internal vertex of any of the three paths, other than $v_*$, one incident path edge belongs to $\mathcal M$ and the other does not. They therefore cover the subset carried by that path and its complement, once each. This includes $c$ when $c\ne v_*$, since its incoming edge belongs to $\mathcal M$ and its outgoing edge is $\{c,r\}\notin\mathcal M$. If $v_*\ne b$, the edges on $Q_0$, $Q_b$, and $Q_\rho$ incident with $v_*$ cover $M_p\setminus\widehat R_*$, $R_*+\tau$, and $R_*$, respectively, so every index at $v_*$ is covered exactly once. If $v_*=b$, there is no edge of $Q_b$, and the other two edges leave precisely $R_*+\tau$ uncovered at $b$. The case $v_*=c$ is included in the first case, with $Q_\rho$ consisting of $\{c,r\}$.

At $u$, the first path edge belongs to $\mathcal M$, so the uncovered indices form the assigned translate of $\widehat R_*$. This set is invariant under addition of $\tau$ and hence is partitioned into pairs $\{x,x+\tau\}$. The corresponding edges from Lemma~\ref{lem:p1-square-lift} cover exactly these vertices.

At $b$, the uncovered indices form the assigned translate of $R_*+\tau$, either because the last edge of $Q_b$ belongs to $\mathcal M$ or by the preceding check when $v_*=b$. This set contains one representative from each coset of $W$. For every uncovered index $x$, choose the edge joining $\mathcal T(-\zeta)_x$ to $\mathcal T(\zeta)_{x+W}$ in Lemma~\ref{lem:p1-branch-lift}. These edges cover each remaining vertex in $\mathcal V(-\zeta)$ and every vertex in $\mathcal V(\zeta)$ exactly once.

At $r$, the appended edge $\{c,r\}$ covers $R_\rho$. Since $R_\rho$ is a translate of $R_*$, its complement $M_p\setminus R_\rho$ has a $1$-factor using edges $x\sim x+\delta_\rho$ by Lemma~\ref{lem:p1-separated-transversal}, which correspond to edges of the affine trapezoid orbit omitted as a loop at $\mathcal T(\rho)$ in Lemma~\ref{lem:p1-golden-lift}.

Finally, every vertex on either original $\mathcal M$-saturated path has its unique $\mathcal M$-edge on that path. Thus every edge of $\mathcal M$ outside the union of the two paths has both endpoints outside that union. For each such edge, take all corresponding edges of $\mathcal K_p$. Together with the edges selected above, they form a $1$-factor of $\mathcal K_p$.
\end{proof}

Combining Lemmas~\ref{lem:p1-core-factor}--\ref{lem:p1-compatible-paths} with Lemmas~\ref{lem:p1-path-lift} and~\ref{lem:p1-multipath-lift}, we conclude that $\mathcal K_p$ has a $1$-factor for every prime $p\equiv1\pmod{12}$ provided that $G_p^*$ satisfies Condition~\ref{cond:p1-bridge} for $p>13$.

\subsection{Bridge condition for \texorpdfstring{$G_p^*$}{G(p)*}}
\label{subsec:p1-bridge}

Suppose toward a contradiction that $p>13$ and Condition~\ref{cond:p1-bridge} fails for a bridge $e$ of $G_p^*$. Let $L$ be the vertex set of the odd component of $G_p^*-e$. Then $L$ contains none of $\mathcal T(-2)$, $\mathcal T(\mathrm{i})$, and $\mathcal T(-\zeta)$, and if $x^2+x-1=0$ has a root $\rho\in\mathbb F_p$ with $\mathcal T(\rho)\in L$, failure of the condition implies $L\neq\{\mathcal T(\rho)\}$ and hence $\mathcal T(\rho-1)\in L$.

Write $Y:=\bigcup_{\mathcal T(a)\in L}H(a)$ and $\ell:=3|L|\equiv3\pmod6$. Since every vertex of $G_p^*$ corresponds to a six-element set $H(a)\subseteq X_p^*$, we have $|Y|=6|L|=2\ell$, and $Y$ is invariant under $\sigma(a)=a^{-1}$ and $\theta(a)=1-a$. The bridge $e$ contributes one inverse pair in $Y$ whose negative lies outside $Y$, while every other inverse pair at a vertex of $L$ has its negative in $Y$; if $x^2+x-1=0$ has a root $\rho\in\mathbb F_p$ with $\mathcal T(\rho)\in L$, the two inverse pairs belonging to the affine trapezoid orbit omitted as a loop at $\mathcal T(\rho)$ are negatives of one another and contribute nothing to the symmetric difference. Hence $|Y\mathbin{\triangle}(-Y)|=4$.

Since $\mathcal T(-2),\mathcal T(\mathrm{i}),\mathcal T(-\zeta)\notin L$, we have $Y\cup(-Y)\subseteq X_p^*$ and $\pm\mathrm{i}\notin Y\cup(-Y)$. By Lemma~\ref{lem:two-interval-form}, there exists $\xi\in\mathbb F_p$ such that
$$
Y=\{\xi+j:1\le j\le\ell\}\cup\{-\xi-j:0\le j<\ell\},\qquad \xi(\xi+\ell)=-1.
$$
Define $I_\xi:=\{\xi,\xi+1,\ldots,\xi+\ell\}$. Then $I_\xi\cap(-I_\xi)=\varnothing$ and $Y\cup(-Y)=I_\xi\cup(-I_\xi)$.

For every root $\rho\in\mathbb F_p$ of $x^2+x-1=0$, both roots and their negatives lie in $H(\rho)$. Since $Y$ is a union of sets $H(a)$, it contains a root if and only if $Y\cup(-Y)$ does. If $Y$ contains no root, Lemma~\ref{lem:bridge-congruences} gives
$$
p\mid\Lambda_\ell-\ell F_\ell,
\qquad
p\mid\Pi_\ell^2-1,
$$
where $F_j$, $\Lambda_j$, and $\Pi_\ell$ are defined as in Section~\ref{sec:p5}.

Otherwise, choose a root $\rho\in Y$ of $x^2+x-1=0$. Let $\varphi(z)=1/(z+1)$ act on $\mathbb P^1(\mathbb F_p)$. Its two fixed points belong to $H(\rho)\subseteq Y$. Consider the directed graph on $Y\cup(-Y)$ with edges $z\to\varphi(z)$ for $z\in-Y$. The fixed points give two $1$-cycles. By the argument in Lemma~\ref{lem:bridge-congruences}, all other directed cycles have length $\operatorname{ord}(\varphi)$, the order of $\varphi$ in $\operatorname{PGL}_2(\mathbb F_p)$, and occur in pairs under $z\mapsto-1/z$. Let $t$ be the common length of the two directed paths. Counting the $2\ell$ directed edges now gives
\begin{align*}
2\ell&=2t+2k\operatorname{ord}(\varphi)+2,\\
\ell-1&=t+k\operatorname{ord}(\varphi)
\end{align*}
for some $k\ge0$. Hence $\varphi^{\ell-1}(\xi)=\varphi^t(\xi)=-\xi$, which gives
$$
p\mid\Lambda_{\ell-1}-\ell F_{\ell-1},
\qquad
p\mid\Pi_\ell^2-1.
$$
The second congruence follows from the same product argument as in Lemma~\ref{lem:bridge-congruences}.

Let $d:=(p-7)/6-|L|$, the number of vertices in the other component of $G_p^*-e$. Since both $|V(G_p^*)|$ and $|L|$ are odd, $d$ is a positive even integer. Since $p=2\ell+6d+7$ and $|Y|=2\ell$, we have $p+1-|Y|=6d+8$. Lemma~\ref{lem:spectral-cutoff} applied to $Z=Y$ gives $\min\{6|L|,6d+8\}\le4000$. Thus either $|L|\le665$ or $d\le664$.

If $d\le664$, choose between $I_\xi$ and $-I_\xi$ as in Section~\ref{sec:p5}, and let $u$ be the least element of $I_\xi\cup(-I_\xi)$ among the integer representatives $0,1,\ldots,p-1$. Then $2\le u\le3d+3$, and the congruence $u(u+\ell)\equiv-1\pmod p$ gives $p\mid N_{d,u}$, where
$$
N_{d,u}:=(6d+7)u-2u^2-2
$$
satisfies $0<N_{d,u}\le1\,991\,008$.

\begin{lemma}\label{lem:p1-certificate-small}
For every $\ell\in\{3,9,\ldots,1995\}$, the integer $\gcd(\Pi_\ell^2-1,|\Lambda_\ell-\ell F_\ell|)$ has no prime divisor $p\equiv1\pmod{12}$ with $p\ge2\ell+13$.
\end{lemma}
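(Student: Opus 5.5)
This statement is a finite computational certificate, so the plan is to verify it by exact integer computation, in the same way as Lemma~\ref{lem:p5-certificate-small}. For each of the $333$ odd values $\ell\in\{3,9,\ldots,1995\}$ with $\ell\equiv3\pmod6$, compute $F_\ell$ and $\Lambda_\ell$ exactly by the recurrence or by fast doubling. Then form $\Lambda_\ell-\ell F_\ell$ and $\Pi_\ell=\prod_{r=1}^{(\ell-1)/2}(r(\ell-r)-1)$, and take $g_\ell:=\gcd(\Pi_\ell^2-1,|\Lambda_\ell-\ell F_\ell|)$.

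The key step is to keep the gcd small before factoring anything. The integers $F_\ell$ have roughly $0.21\ell$ digits, and $\Pi_\ell$ has up to several thousand digits. Both are manageable in exact arithmetic. The gcd itself should typically be tiny, because any common prime factor must satisfy two independent congruence conditions. One first checks that $\Lambda_\ell-\ell F_\ell\neq0$ for every $\ell$ in the range. This holds because $\Lambda_\ell\approx\phi^\ell$ and $\ell F_\ell\approx\ell\phi^\ell/\sqrt5$, so the difference is nonzero for $\ell\ge3$. Then compute $g_\ell$ by the Euclidean algorithm, and completely factor each $g_\ell$ by trial division or ECM. This is feasible because $g_\ell$ is expected to be small. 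For every prime divisor, test whether $p\equiv1\pmod{12}$ and $p\ge2\ell+13$, and confirm that no prime passes both tests.

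If some $g_\ell$ is unexpectedly large, do not attempt a full factorization. Instead, strip all prime factors below $2\ell+13$ and all primes not congruent to $1\pmod{12}$ that are found by partial trial division. Then certify the remaining cofactor differently. Either prove it prime with a primality certificate and check its residue modulo $12$, or split it further. The modular check itself is cheap once the prime is known.

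The main obstacle is purely computational. It is ensuring that every $g_\ell$ can be factored completely, or at least certified free of admissible primes. The results should also be recorded in a reproducible form, to be deposited alongside the data in~\cite{LiuFengZenodo}.
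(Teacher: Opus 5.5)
Your proposal is correct and matches the paper's own proof, which is a direct finite computation: complete factorization of $\gcd(\Pi_\ell^2-1,|\Lambda_\ell-\ell F_\ell|)$ for each of the $333$ values of $\ell$, with the data recorded in~\cite{LiuFengZenodo}. Two small remarks. First, $\Pi_3=1$, so for $\ell=3$ the first argument is $0$ and the gcd is $|\Lambda_3-3F_3|=2$; your nonvanishing check on $\Lambda_\ell-\ell F_\ell$ already covers this case. Second, that nonvanishing holds exactly rather than only asymptotically, since $\Lambda_\ell=F_\ell+2F_{\ell-1}<3F_\ell\le\ell F_\ell$ for $\ell\ge3$.
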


\begin{proof}
Complete factorization for each of the $333$ values $\ell\in\{3,9,\ldots,1995\}$ yields no prime factor satisfying the stated conditions; see~\cite{LiuFengZenodo}.
\end{proof}

\begin{lemma}\label{lem:p1-certificate-golden}
For every $\ell\in\{9,15,\ldots,1995\}$, the integer $\gcd(\Pi_\ell^2-1,|\Lambda_{\ell-1}-\ell F_{\ell-1}|)$ has no prime divisor $p\equiv1\pmod{12}$ for which $(p-2\ell-7)/6$ is a positive even integer.
\end{lemma}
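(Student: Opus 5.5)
This lemma is a finite computational certificate, parallel to Lemmas~\ref{lem:p5-certificate-small} and~\ref{lem:p1-certificate-small}. I would prove it by direct exact computation over the $332$ values $\ell\in\{9,15,\ldots,1995\}$, that is, all $\ell\equiv3\pmod6$ with $9\le\ell\le1995$. The value $\ell=3$ is excluded because a component containing $\mathcal T(\rho)$ and $\mathcal T(\rho-1)$ has $|L|\ge2$, and hence $|L|\ge3$ since $|L|$ is odd.

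For each such $\ell$ I would proceed in four steps.
\begin{enumerate}
\item Compute $F_{\ell-1}$, $F_\ell$ and $\Lambda_{\ell-1}=F_{\ell-2}+F_\ell$ exactly as integers, using fast doubling. They have at most about $420$ decimal digits.
\item Form $\Pi_\ell=\prod_{r=1}^{(\ell-1)/2}(r(\ell-r)-1)$ as an exact integer and compute $g_\ell:=\gcd(\Pi_\ell^2-1,|\Lambda_{\ell-1}-\ell F_{\ell-1}|)$.
\item Check that $\Lambda_{\ell-1}-\ell F_{\ell-1}\ne0$, so that $g_\ell$ is a well-defined positive integer bounded by $|\Lambda_{\ell-1}-\ell F_{\ell-1}|$. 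This is clear for $\ell\ge9$, since $\Lambda_{\ell-1}<3F_\ell\le \ell F_{\ell-1}$.
\item Factor $g_\ell$ completely. For each prime divisor $p$, test whether $p\equiv1\pmod{12}$ and whether $(p-2\ell-7)/6$ is a positive even integer, which is equivalent to $p\ge2\ell+19$ and $p\equiv2\ell+7\pmod{12}$.
\end{enumerate}
The lemma asserts that no $(\ell,p)$ passes this test, and I would record the factorizations and the verification script in~\cite{LiuFengZenodo}, as for the earlier certificates.

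\textbf{Main obstacle: complete factorization.} The numbers $\Lambda_{\ell-1}-\ell F_{\ell-1}$ can have hundreds of digits. Taking the gcd first typically collapses them drastically, because $\Pi_\ell^2-1$ is unrelated in structure. Still, there is no a priori guarantee that $g_\ell$ is small. To avoid needing complete factorizations, I would first strip all prime factors below a bound $B$, say $10^7$, by trial division. A leftover cofactor is harmless if it is $1$ or a proven prime (certified by ECPP) that fails the congruence test. If a composite cofactor remains, I would run ECM or SIQS on it.

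\textbf{Avoiding factorization altogether.} Alternatively, one can bypass factorization of $g_\ell$: only primes $p\equiv1\pmod{12}$ with $p\equiv2\ell+7\pmod{12}$ matter. Since $\ell\equiv3\pmod6$ gives $2\ell+7\equiv1\pmod{12}$ exactly when $\ell\equiv3\pmod{6}$ and $2\ell\equiv 6\pmod{12}$, this congruence is automatic, so the condition reduces to a size bound $p\ge2\ell+19$. That alone does not avoid factoring. Hence I expect the certificate to rely, as in the stated proof pattern of the preceding lemmas, on complete factorization succeeding in practice, and I would report the largest cofactor encountered.
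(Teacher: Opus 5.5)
Your proposal matches the paper's proof: for each of the $332$ values $\ell\in\{9,15,\ldots,1995\}$, completely factor $\gcd(\Pi_\ell^2-1,|\Lambda_{\ell-1}-\ell F_{\ell-1}|)$ and check that no prime factor meets the stated conditions, with the computation recorded in the deposited data. Your reduction of the test to $p\equiv1\pmod{12}$ and $p\ge2\ell+19$ is correct, since $p\equiv2\ell+7\pmod{12}$ holds automatically when $\ell\equiv3\pmod6$, and so is your explanation of why $\ell=3$ is omitted.
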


\begin{proof}
Complete factorization for $\ell\in\{9,15,\ldots,1995\}$ yields no prime factor satisfying the stated conditions (the omitted value $\ell=3$ corresponds to the allowed singleton component $\{\mathcal T(\rho)\}$); see~\cite{LiuFengZenodo}.
\end{proof}

\begin{lemma}\label{lem:p1-certificate-complement}
Let $d$ and $u$ be integers with $d$ even, $2\le d\le664$, and $2\le u\le3d+3$, and let $p\equiv1\pmod{12}$ be a prime divisor of $N_{d,u}=(6d+7)u-2u^2-2$ such that $\ell:=(p-6d-7)/2>0$ and $\ell\equiv3\pmod6$.
\begin{enumerate}[label=(\roman*)]
\item If $p\mid\Lambda_\ell-\ell F_\ell$, then $\Pi_\ell^2\not\equiv1\pmod p$.
\item If $x^2+x-1=0$ has a solution in $\mathbb F_p$ and $\ell>3$, then $p\mid\Lambda_{\ell-1}-\ell F_{\ell-1}$ implies $\Pi_\ell^2\not\equiv1\pmod p$.
\end{enumerate}
\end{lemma}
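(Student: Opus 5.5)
This is a finite computational certificate, and I would prove it exactly as Lemma~\ref{lem:p5-certificate-complement} was proved: exhaustive enumeration followed by exact modular arithmetic, with the certificate data deposited in~\cite{LiuFengZenodo}.

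The first step is to enumerate every pair $(d,u)$ with $d\in\{2,4,\ldots,664\}$ and $2\le u\le 3d+3$. For each pair I compute the positive integer $N_{d,u}=(6d+7)u-2u^2-2$ and factor it completely. Since $N_{d,u}\le1\,991\,008$, trial division or a sieve of smallest prime factors up to $2\cdot10^6$ is enough. From the factorization I keep each prime divisor $p\equiv1\pmod{12}$ and set $\ell=(p-6d-7)/2$. I retain the triple $(p,\ell,d)$ only if $\ell>0$ and $\ell\equiv3\pmod6$, and I record each distinct triple once. Here $\ell$ is determined by $p$ and $d$; it is not restricted to $\ell\le1995$, since the whole point of this lemma is the complementary range not covered by Lemmas~\ref{lem:p1-certificate-small} and~\ref{lem:p1-certificate-golden}.

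The second step handles each retained triple.
\begin{itemize}
\item For part~(i), I compute $F_\ell$ and $\Lambda_\ell$ modulo $p$ by fast doubling, or by powering the matrix $\bigl(\begin{smallmatrix}1&1\\1&0\end{smallmatrix}\bigr)$. I then test whether $\Lambda_\ell-\ell F_\ell\equiv0\pmod p$.
\item For part~(ii), I first test whether $5$ is a square modulo $p$, using the Legendre symbol or quadratic reciprocity: $x^2+x-1$ has a root exactly when $p\equiv\pm1\pmod5$. Among the triples with $\ell>3$ that pass, I test whether $\Lambda_{\ell-1}-\ell F_{\ell-1}\equiv0\pmod p$.
\item For every triple passing either test, I compute $\Pi_\ell=\prod_{r=1}^{(\ell-1)/2}(r(\ell-r)-1)$ modulo $p$ directly, which takes $O(\ell)$ multiplications, and check that $\Pi_\ell^2\not\equiv1\pmod p$.
\end{itemize}
I expect very few survivors. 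Heuristically, each Lucas–Fibonacci congruence holds with probability about $1/p$. In Section~\ref{sec:p5} the analogous run left only two, so I would list the survivors explicitly in a small table together with $\Pi_\ell^2\bmod p$, as was done there.

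The main obstacle is not conceptual but one of reliability and completeness. The enumeration must range over exactly the stated parameter region, including the parity of $d$ and the full range of $u$, and both congruence tests must use the correct index shift $\ell-1$ in part~(ii). For reproducibility I would run the check twice with independent code, once with Python big integers and once in a CAS such as PARI/GP, and deposit the list of triples and the survivors with the other certificates. If some survivor did satisfy $\Pi_\ell^2\equiv1\pmod p$, the fallback would be to check that configuration directly: construct $G_p^*$ for that single prime $p$ and verify Condition~\ref{cond:p1-bridge} by computer.
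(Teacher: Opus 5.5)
Your plan matches the paper's proof. The paper enumerates the same region of pairs $(d,u)$ and factors $N_{d,u}$, obtaining $50\,490$ retained triples $(p,\ell,d)$. For (i) it finds four triples with $\Lambda_\ell-\ell F_\ell\equiv0\pmod p$. For (ii) it finds six with $\Lambda_{\ell-1}-\ell F_{\ell-1}\equiv0\pmod p$, of which only $p=3709$ and $p=127\,249$ have $x^2+x-1=0$ solvable. None of the relevant survivors has $\Pi_\ell^2\equiv1\pmod p$. You apply the solvability filter before the congruence test rather than after, which is an immaterial reordering.
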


\begin{proof}
Enumerating all pairs $(d,u)$ with $d\in\{2,4,\ldots,664\}$ and $2\le u\le3d+3$ (noting that $d=0$ gives $N_{0,2}=4$ and $N_{0,3}=1$, which contribute no primes $p\equiv1\pmod{12}$) and factoring $N_{d,u}$ yields $50\,490$ distinct triples $(p,\ell,d)$ with $p\equiv1\pmod{12}$, $\ell=(p-6d-7)/2>0$, and $\ell\equiv3\pmod6$.

For~(i), computing $F_\ell$ and $\Lambda_\ell$ modulo $p$ shows that only four retained triples satisfy $\Lambda_\ell-\ell F_\ell\equiv0\pmod p$:
$$
\begin{array}{r|r|r|r}
p & \ell & d & \Pi_\ell^2\bmod p\\
\hline
1693 & 33 & 270 & 1024\\
3673 & 693 & 380 & 2246\\
8209 & 2115 & 662 & 4532\\
11\,617 & 4569 & 412 & 9235
\end{array}
$$
None of these has $\Pi_\ell^2\equiv1\pmod p$.

For~(ii), among the retained triples with $\ell>3$, only six satisfy $\Lambda_{\ell-1}-\ell F_{\ell-1}\equiv0\pmod p$:
$$
\begin{array}{r|r|r|r}
p & \ell & d & \Pi_\ell^2\bmod p\\
\hline
1657 & 159 & 222 & 1014\\
2137 & 87 & 326 & 1689\\
2953 & 513 & 320 & 602\\
3709 & 87 & 588 & 993\\
18\,973 & 8163 & 440 & 17\,490\\
127\,249 & 62\,013 & 536 & 60\,793
\end{array}
$$
Among these six primes, $x^2+x-1=0$ is solvable in $\mathbb F_p$ (that is, $5^{(p-1)/2}\equiv1\pmod p$) only for $p=3709$ and $p=127\,249$, and in neither case is $\Pi_\ell^2\equiv1\pmod p$. The complete calculation is recorded in~\cite{LiuFengZenodo}.
\end{proof}

Suppose first that $|L|\le665$. If $Y$ contains no root of $x^2+x-1=0$, then $\ell\in\{3,9,\ldots,1995\}$ and, since $d\ge2$, we have $p=2\ell+6d+7\ge2\ell+19>2\ell+13$; Lemma~\ref{lem:p1-certificate-small} gives a contradiction. Otherwise, $Y$ contains a root $\rho$ of $x^2+x-1=0$, so $\mathcal T(\rho)\in L$. Failure of Condition~\ref{cond:p1-bridge} implies $|L|>1$, hence $\ell\in\{9,15,\ldots,1995\}$, and Lemma~\ref{lem:p1-certificate-golden} gives a contradiction. Finally, if $|L|>665$, then $2\le d\le664$, and Lemma~\ref{lem:p1-certificate-complement} gives a contradiction in both cases. Therefore $G_p^*$ satisfies Condition~\ref{cond:p1-bridge}.

Combining this with Subsections~\ref{subsec:p1-tutte} and~\ref{subsec:p1-lift} proves the following theorem.

\begin{theorem}\label{thm:p1-full-matching}
Let $p$ be a prime with $p\equiv1\pmod{12}$, and let $\mathcal K_p$ be the full K\"ohler graph whose vertices are the orbits $\mathcal T_{\pm}(T)$ of irregular triangles in $\mathbb F_p$. Then $\mathcal K_p$ has a $1$-factor, and hence there exists an auxiliary design $A^*(p)$.
\end{theorem}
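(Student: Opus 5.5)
The proof will be an assembly of results already established in this section, split by the value of $p$ and by whether $x^2+x-1=0$ is solvable in $\mathbb F_p$.

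For $p=13$, I will use the explicit construction: the four base blocks $\{0,1,3,4\}$, $\{0,2,5,7\}$, $\{0,2,6,9\}$, $\{0,1,5,6\}$ give four edges that pair the eight vertices of $\mathcal K_{13}$. That is a $1$-factor.

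For $p>13$, the first step is to invoke Subsection~\ref{subsec:p1-bridge}, which shows that $G_p^*$ satisfies Condition~\ref{cond:p1-bridge}. That argument combines the spectral cutoff of Lemma~\ref{lem:spectral-cutoff}, giving $|L|\le665$ or $d\le664$, with the congruences from Lemma~\ref{lem:bridge-congruences} and its golden-ratio variant, and with the computational certificates in Lemmas~\ref{lem:p1-certificate-small}--\ref{lem:p1-certificate-complement}. I then split into cases.

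\emph{Case 1: $x^2+x-1=0$ has no root in $\mathbb F_p$.} Adjoin a vertex $w$ to $G_p^*$ and join it to the three degree-$2$ vertices $\mathcal T(-2)$, $\mathcal T(\mathrm{i})$ and $\mathcal T(-\zeta)$. The result is a connected bridgeless cubic multigraph. By Sch\"onberger's theorem, I obtain $1$-factors of $G_p^*-\mathcal T(-\zeta)$ and of $G_p^*-\mathcal T(\mathrm{i})$. Lemma~\ref{lem:p1-path-lift} with $U=\mathcal T(\mathrm{i})$ then lifts these to a $1$-factor of $\mathcal K_p$.

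\emph{Case 2: a root $\rho$ exists.} Lemma~\ref{lem:p1-core-factor} gives a $1$-factor $\mathcal M$ of $G_p^\circ=G_p^*-\mathcal T(\rho)$, and hence a $1$-factor of $G_p^*-\mathcal T(\rho)$.
\begin{itemize}
\item[(a)] If $G_p^\circ-\mathcal T(-\zeta)-\mathcal T(\rho-1)$ has a $1$-factor, add the pendant edge $\{\mathcal T(\rho-1),\mathcal T(\rho)\}$ to obtain a $1$-factor of $G_p^*-\mathcal T(-\zeta)$. Then apply Lemma~\ref{lem:p1-path-lift} with $U=\mathcal T(\rho)$.
\item[(b)] Otherwise, check the hypotheses of Lemma~\ref{lem:p1-four-defect} for $G_p^\circ$. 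It is connected, of even order, has a $1$-factor, has exactly four degree-$2$ vertices, and every odd bridge component contains one of them, which was verified from Condition~\ref{cond:p1-bridge}. Applying the lemma with $u=\mathcal T(-\zeta)$ and with $u=\mathcal T(\rho-1)$ shows that $G_p^\circ-\mathcal T(-\zeta)-\mathcal T(\mathrm{i})$ and $G_p^\circ-\mathcal T(\rho-1)-\mathcal T(\mathrm{i})$ both have $1$-factors. Lemma~\ref{lem:p1-compatible-paths} then supplies the compatible $\mathcal M$-saturated paths, and Lemma~\ref{lem:p1-multipath-lift} produces a $1$-factor of $\mathcal K_p$.
\end{itemize}

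The final assertion follows from the correspondence in Section~\ref{sec:auxiliary}. Given a $1$-factor of $\mathcal K_p$, take all blocks in the translation orbits of its edges. By Lemma~\ref{lem:full-kohler-incidence}, every irregular triangle is covered exactly once. Admissibility ensures that no selected block contains an isosceles triangle. Hence these blocks form an $A^*(p)$.

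The real difficulty is the bridge condition itself, which rests on the spectral bound and the finite certificates; the paper has already supplied it. In the assembly, the only step needing care is (b): confirming the bridge hypothesis of Lemma~\ref{lem:p1-four-defect} for $G_p^\circ$, which the preceding discussion of $G_p^\circ$ already provides.
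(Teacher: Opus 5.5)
Your proposal is correct and is essentially the paper's own argument. You use the explicit $1$-factor for $p=13$, and for $p>13$ Condition~\ref{cond:p1-bridge} from Subsection~\ref{subsec:p1-bridge}, followed by Sch\"onberger's theorem with Lemma~\ref{lem:p1-path-lift} when $x^2+x-1=0$ has no root, or the same two-case split via Lemmas~\ref{lem:p1-core-factor}--\ref{lem:p1-compatible-paths}, \ref{lem:p1-path-lift} and~\ref{lem:p1-multipath-lift} when a root $\rho$ exists, including the check of the hypotheses of Lemma~\ref{lem:p1-four-defect} for $G_p^\circ$ and the passage to $A^*(p)$ via Lemma~\ref{lem:full-kohler-incidence}.
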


\section{Existence of \texorpdfstring{$S$}{S}-cyclic Steiner quadruple systems}\label{sec:existence}

In this section, we combine the auxiliary designs $A^*(p)$ from Sections~\ref{sec:p5} and~\ref{sec:p1} with the reduction theorems of Piotrowski and of Ji and Lu to prove Theorems~\ref{thm:spectrum-main}--\ref{thm:abelian-spectrum}.

\subsection{\texorpdfstring{$S$}{S}-cyclic Steiner quadruple systems}
\label{subsec:prime-existence}

\begin{proof}[Proof of Theorem~\ref{thm:spectrum-main}]
Necessity follows from Lemma~\ref{lem:intro-necessary}. For sufficiency, let $p\equiv1$ or $5\pmod{12}$ be prime. Sections~\ref{sec:p5} and~\ref{sec:p1} provide an auxiliary design $A^*(p)$, with the empty family used when $p=5$.

Choose one base block $B$ from each translation orbit of $A^*(p)$, and let $2B\subseteq\mathbb Z_{2p}$ be its image under $b\bmod p\mapsto 2b\bmod 2p$ ($b\in\mathbb Z$). Developing these Type~III base blocks together with the Type~I kite quadrangles and Type~II trapezoids from Lemmas~\ref{lem:type-I} and~\ref{lem:type-II} under translations in $\mathbb Z_{2p}$ covers every triple in $\mathbb Z_{2p}$ exactly once. Since every block is a kite quadrangle or a trapezoid, negation preserves each translation orbit, yielding an $S$-cyclic $\operatorname{SQS}(2p)$.

For $v=4$, the single block $\mathbb Z_4$ forms an $S$-cyclic $\operatorname{SQS}(4)$. For every $v>4$ satisfying the stated conditions, Lemma~\ref{lem:intro-piotrowski-equiv} now yields an $S$-cyclic $\operatorname{SQS}(v)$.
\end{proof}

\subsection{Strictly \texorpdfstring{$S$}{S}-cyclic Steiner quadruple systems}
\label{subsec:complete-spectrum}

\begin{proof}[Proof of Theorem~\ref{thm:strict-spectrum-main}]
Necessity follows from Theorem~\ref{thm:spectrum-main} and the necessary congruence condition $v\equiv2,10\pmod{24}$ for strictly cyclic Steiner quadruple systems. Conversely, Theorem~\ref{thm:spectrum-main} gives an $S$-cyclic $\operatorname{SQS}(v)$ for every order satisfying the stated conditions. A block stabilizer under translations has order dividing $\gcd(4,v)=2$, so every short block orbit has length $v/2$. Such orbits are excluded by~\cite[Theorem~2]{ChangEtAl2017}, since negation is an automorphism. Thus the system is strictly cyclic.
\end{proof}

Following~\cite{FengChangJi2008}, for $v\ge4$ with $v\equiv2,10\pmod{24}$, a strictly cyclic $\operatorname{SQS}(v)$ is denoted by $\operatorname{sSQS}^{*}(v)$ if every triple of either form
$$
\{j,j+i,j+2i\},\qquad \{j,j+i,j+v/2\},
$$
where $j\in\mathbb Z_v$ and $1\le i<v/2$, is contained in a block whose translation orbit has a representative $\{0,v/2,a,-a\}$ for some integer $1\le a\le(v-2)/4$. Geometrically, every block containing a right-angled or isosceles triangle is a kite quadrangle.

\begin{corollary}\label{cor:cyclic-block-orbits}
Let $v\ge4$ satisfy $v\equiv2,10\pmod{24}$. Then every $S$-cyclic $\operatorname{SQS}(v)$ is an $\operatorname{sSQS}^{*}(v)$.
\end{corollary}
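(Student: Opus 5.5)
Let $(\mathbb Z_v,\mathcal B)$ be an $S$-cyclic $\operatorname{SQS}(v)$ with $v\equiv2,10\pmod{24}$, and put $n:=v/2$, which is odd. By Theorem~\ref{thm:strict-spectrum-main} (more precisely, by \cite[Theorem~2]{ChangEtAl2017}), the system is strictly cyclic, so only the kite condition needs proof. The strategy is to show that any block containing a right-angled or isosceles triangle is forced to be a kite quadrangle, and that its orbit has a representative $\{0,n,a,-a\}$ with $1\le a\le(v-2)/4$.

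\emph{Step 1: symmetric blocks have centres.} Fix a block $B$. By $S$-cyclicity, $-B=B+\delta$ for some $\delta$, so the map $\psi(x)=-x-\delta$ is an involution preserving $B$. Since $v$ is even, the congruence $2c\equiv-\delta$ has two solutions $c$ and $c+n$, and $\psi$ fixes exactly those two points $c,c+n$ when $\delta$ is even, and fixes no point when $\delta$ is odd. Hence $B$ either (a) contains $0$ or $2$ fixed points of $\psi$ together with one or two swapped pairs, or (b) is a union of two $\psi$-swapped pairs.

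\emph{Step 2: the fixed-point case gives a kite.} If $\psi$ fixes some point of $B$, then, because $|B|=4$ and swapped pairs come two at a time, $B$ contains both fixed points. So $B=\{c,c+n,c+a,c-a\}$ with $a\notin\{0,n\}$. This is a kite quadrangle, and after translating and replacing $a$ by $n-a$ if needed (note $\{0,n,a,-a\}+n=\{n,0,n+a,n-a\}$) we may assume $1\le a\le(n-1)/2=(v-2)/4$.

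\emph{Step 3: every other block is a trapezoid with no special triangle.} In case (b), $B=\{x,-\delta-x,y,-\delta-y\}$, which has the form $\{x,x+s,x+t,x+s+t\}$ and so is a trapezoid. It remains to show that such a trapezoid contains no right-angled and no isosceles triangle. For this I would use a counting argument rather than a case analysis. Each kite block of Step 2 contains two right-angled triangles ($\{c,c+n,c\pm a\}$) and two isosceles triangles ($\{c+a,c,c-a\}$ and $\{c+a,c+n,c-a\}$, whose apexes are $c$ and $c+n$). The number of right-angled triples is $n(v-2)$ and the number of isosceles triples is $v(n-1)$, both equal to $vn-v$. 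Let $k$ be the number of kite orbits (each of length $v$, since the system is strictly cyclic). Let $R$ and $I$ be the numbers of right-angled and isosceles triangles lying in trapezoid blocks. Then
\[
2kv+R=v(n-1),\qquad 2kv+I=v(n-1).
\]
Every kite $\{c,c+n,c+a,c-a\}$ covers four special triangles, and each kite orbit corresponds to a parameter pair $\{a,n-a\}$. A trapezoid containing an antipodal pair $\{x,x+n\}$ would, by the reflection, contain a second antipodal pair, and this forces $2\delta\equiv0$ in a way that I expect to contradict $n$ odd. Showing $R=0$ directly (from $n$ odd and the trapezoid structure) then gives $k=(n-1)/2$, and hence $I=0$.

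\emph{Main obstacle.} I expect the crux to be this exclusion: that a $\psi$-fixed-point-free trapezoid in $\mathbb Z_{2n}$ with $n$ odd cannot contain an antipodal pair or an arithmetic triple. I would first try the direct parity argument: antipodal elements of $B$ must be paired by $\psi$ or form two swapped pairs differing by $n$. After that, the isosceles case would be handled through the counting identity above.
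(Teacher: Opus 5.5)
There is a genuine gap, and it sits exactly at the step you identify as the crux. The local claim you hope to prove is false: you want to show that a block whose involution $\psi$ has no fixed point in it cannot contain an antipodal pair or an arithmetic triple, using only that the block is a trapezoid, that $n$ is odd, and that orbits are full. In $\mathbb Z_{10}$ ($n=5$), the set $B=\{0,1,4,5\}$ satisfies $-B=B+5$. The map $\psi(x)=-x-5$ is fixed-point-free and swaps $0\leftrightarrow5$ and $1\leftrightarrow4$, and $B+5\neq B$, so the orbit is full. Yet $B$ contains the right-angled triangle $\{0,1,5\}$. Here $2\delta\equiv0$ does hold, and nothing contradicts $n$ odd.

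Your reflection argument overlooks that $\psi$ can map an antipodal pair $\{x,x+n\}$ to itself; this happens when $\delta=-2x-n$, which is odd. Only the case of a second antipodal pair $\{y,y+n\}$ is excluded, since then $B+n=B$ contradicts strict cyclicity. Similarly, $\{0,1,2,3\}$ satisfies $-B=B+7$, is fixed-point-free with a full orbit, and contains the isosceles triangle $\{0,1,2\}$. So no argument about individual blocks can give $R=0$, and the identity $R=I$ merely transfers the problem from one type of triangle to the other.

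The missing ingredient is a global use of the Steiner property combined with the symmetry. Since $-B=B+\delta\in\mathcal B$, negation maps blocks to blocks. For $a\in\mathbb Z_v\setminus\{0,n\}$, let $B$ be the unique block through the negation-invariant triple $\{-a,0,a\}$. Then $-B$ is a block containing the same triple, so $-B=B$, and you may take $\delta=0$. Now $\psi=-\mathrm{id}$ fixes the point $0\in B$, and your Step~2 forces $B=\{0,n,a,-a\}$. This block contains both $\{-a,0,a\}$ and $\{0,a,n\}$. Its translates therefore contain every triple $\{j,j+i,j+2i\}$ and $\{j,j+i,j+n\}$, and your normalization of $a$ finishes the proof.

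This is essentially the paper's proof. With it, your Step~3 and the counting become unnecessary, although they would now correctly yield $I=0$ and hence $R=0$.
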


\begin{proof}
Strict cyclicity follows as in the proof of Theorem~\ref{thm:strict-spectrum-main}. For each $a\in\mathbb Z_v\setminus\{0,v/2\}$, negation preserves the triple $\{0,a,-a\}$ and hence its unique containing block; the fourth point of this block must be fixed by negation and is therefore $v/2$, so $\{0,v/2,a,-a\}$ is a block. Translating this block by $j+a$ and by $j$ covers $\{j,j+a,j+2a\}$ and $\{j,j+a,j+v/2\}$, respectively. Since replacing $a$ by $-a$ or $a+v/2$ preserves the translation orbit of $\{0,v/2,a,-a\}$, each such orbit has a representative with $1\le a\le(v-2)/4$.
\end{proof}

\subsection{Symmetric \texorpdfstring{$K$}{K}-invariant Steiner quadruple systems}
\label{subsec:abelian-spectrum}

We use the following reduction theorem of Ji and Lu~\cite{JiLu}, which extends that of Munemasa and Sawa~\cite{MunemasaSawa}.

\begin{theorem}[Ji--Lu {\cite[Theorem~5.1]{JiLu}}]
\label{thm:jilu-reduction}
Let $K$ be a finite abelian group of order $v\ge4$. A symmetric $K$-invariant Steiner quadruple system exists if and only if $v\equiv2$ or $4\pmod6$, no element of $K$ has order divisible by $8$, and for every odd prime divisor $p$ of $v$ there exists an $S$-cyclic $\operatorname{SQS}(2p)$.
\end{theorem}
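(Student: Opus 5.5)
Since this is the reduction theorem of Ji and Lu, taken from \cite{JiLu} and not proved in this paper, I would reconstruct its proof along the lines of Piotrowski's cyclic reduction (Lemma~\ref{lem:intro-piotrowski-equiv}), with the cyclic group replaced by an arbitrary finite abelian group $K$. There are two directions, and I would treat them separately.

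\emph{Necessity.} The condition $v\equiv2,4\pmod6$ is forced by the existence of an $\operatorname{SQS}(v)$. For the $2$-part, I use the following key observation. Let $x\in K$ with $2x\neq0$. The triple $\{0,x,-x\}$ is fixed by negation, so its unique block $B$ satisfies $-B=B+y$, and this forces $y=0$. Hence the fourth point $w$ of $B$ satisfies $2w=0$, and $\{0,x,-x,w_x\}$ is a block with $w_x$ of order at most $2$.

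Now suppose $g$ has order $8$. Translating these kite blocks by multiples of $g$ shows that the triples $\{0,g,2g\}$, $\{0,2g,4g\}$, $\{0,g,3g\}$, and so on, lie in prescribed blocks. Comparing the blocks forced through the triples containing $0$, $4g$ and the other points of $\langle g\rangle+\{0,w\}$ should give a triple covered twice. I would carry this out as a finite case check inside the subgroup $\langle g\rangle\times\langle w\rangle$.

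For an odd prime $p\mid v$, I would pass from $K$ to a group of order $2p$, in one of two ways:
\begin{itemize}
\item restrict to the blocks lying inside a suitable subgroup $H\cong\mathbb Z_p\times\mathbb Z_2$, or
\item take a quotient by a complement of the $p$-part and check that the symmetric SQS descends.
\end{itemize}
The needed $H$ exists, since $v$ is even because $v\equiv2,4\pmod6$. Either way the result should be an $S$-cyclic $\operatorname{SQS}(2p)$. If this direct derivation is too delicate, it suffices, by the present paper, to show the equivalent statement that $p\equiv1,5\pmod{12}$. That would follow by applying the Grannell--Griggs/Diener counting argument (Lemma~\ref{lem:intro-necessary}) to orbits of triples inside the Sylow $p$-subgroup.

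\emph{Sufficiency.} I would argue by induction on $|K|$, using recursive product constructions that preserve $K$-invariance and the symmetry condition $-B=B+y$. There are three kinds of ingredients.
\begin{enumerate}
\item Base systems: the boolean $\operatorname{SQS}(2^n)$ on $\mathbb Z_2^n$, which is symmetric since negation is the identity; the trivial system on $\mathbb Z_4$; and symmetric systems on $\mathbb Z_2^a\times\mathbb Z_4^b$.
\item The assumed $S$-cyclic $\operatorname{SQS}(2p)$, one for each odd prime $p\mid v$.
\item Hanani-type doubling and tripling products, in symmetric form: from symmetric systems on groups $K_1$ and $K_2$ (or on a subgroup and a quotient), build a symmetric system on an extension, with new blocks given by explicit kite and trapezoid shapes in the product group.
\end{enumerate}
I would decompose $K\cong K_2\times\prod_p\mathbb Z_{p^{e_p}}$, where the hypotheses force the $2$-part to be $K_2\cong\mathbb Z_2^a\times\mathbb Z_4^b$. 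I would then use the $\operatorname{SQS}(2p)$ systems to handle each odd prime factor together with one $\mathbb Z_2$ factor, and combine the pieces by the product constructions. Piotrowski's argument in the cyclic case serves as the template for this step.

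The main obstacle is the product step. The new blocks must be chosen so that every mixed triple is covered exactly once and every block orbit is still fixed by negation. This is especially hard when the $2$-part contains $\mathbb Z_4$ factors, where the blocks through triples meeting the subgroup of involutions are constrained by the kite observation above. I would first try to adapt the Munemasa--Sawa framework of \cite{MunemasaSawa} from elementary abelian $2$-parts to $\mathbb Z_4$ factors, using trapezoids $\{x,x+a,x+b,x+a+b\}$ in the product group for the mixed triples, and then verify the exact covering by counting triple orbits as in Section~\ref{sec:kohler-method}.
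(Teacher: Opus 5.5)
The paper gives no proof of this statement; it quotes it from Ji--Lu. Your proposal therefore has to stand on its own, and it does not: in both directions the decisive steps are only announced (``should give'', ``I would first try'').

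\textbf{Sufficiency.} This is the largest gap, and it is the real content of the theorem.
\begin{itemize}
\item No symmetric doubling product is written down, and none is checked for exact coverage or for negation-stability of its block orbits.
\item Symmetric systems on $\mathbb Z_2^a\times\mathbb Z_4^b$ are assumed rather than constructed.
\item A tripling step is impossible in this setting, since $3\nmid v$.
\item Your decomposition misses the hard cases. If the Sylow $2$-subgroup has no $\mathbb Z_2$ factor (e.g.\ $K=\mathbb Z_{4p}$ or $\mathbb Z_4^2\times\mathbb Z_p$), there is no $\mathbb Z_2$ to pair with $p$. If the $p$-part is $\mathbb Z_{p^2}$ or $\mathbb Z_p^2$, you must pass from an $S$-cyclic $\operatorname{SQS}(2p)$ to an odd part of order $p^2$.
\end{itemize}
That lifting is exactly what the reductions of Piotrowski and of Ji--Lu provide, and your plan has no mechanism for it.

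\textbf{Necessity: order $8$.} The exclusion of elements of order $8$ is asserted, not derived, and a check confined to $\langle g\rangle\times\langle w\rangle$ is not justified. Work it out locally. The reflection $t\mapsto -t-y$ preserving the block through the irregular triple $\{0,g,3g\}$ forces that block to be $\{0,g,3g,4g\}$. The only alternatives are $\{0,g,2g,3g\}$ and $\{0,g,3g,6g\}$, and these would make $w_g$ or $w_{3g}$ of order $4$. Hence $w_g\ne4g$, and the $\langle g\rangle$-orbit of $\{0,g,3g,4g\}$ together with the kites covers every triple of $\langle g\rangle$ exactly once. So nothing goes wrong inside $\langle g\rangle$. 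A contradiction must come from the involutions $w_x$ over all relevant $x$, and you give no reason these stay in $\langle g\rangle\times\langle w\rangle$.

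\textbf{Necessity: the $\operatorname{SQS}(2p)$ condition.} Restricting to $H\cong\mathbb Z_2\times\mathbb Z_p$ fails when $K$ has several involutions. The kite through $\{0,x,-x\}$ ends at $w_x$, which need not be the involution of $H$. Passing to a quotient also fails, since an SQS on $K$ does not in general induce one on a quotient.

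The missing idea is to restrict to a subgroup $Q$ of order $p$ instead.
\begin{itemize}
\item Since $|Q|$ is odd, no two points of $Q$ differ by an involution.
\item So the reflection preserving the block through an irregular triple of $Q$ fixes no point of that block. For a suitable labelling $\{a,b,c\}$ of the triple, the block is the trapezoid $\{a,b,c,a+b-c\}\subseteq Q$.
\item Isosceles triples of $Q$ lie in kites whose fourth point is outside $Q$.
\end{itemize}
Hence the blocks contained in $Q$ form an $A^*(p)$, which Lemmas~\ref{lem:type-I} and~\ref{lem:type-II} complete to an $S$-cyclic $\operatorname{SQS}(2p)$. Counting the base blocks of this $A^*(p)$ also gives $24\mid(p-1)(p-5)$, i.e.\ $p\equiv1\pmod4$, which is your fallback.
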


\begin{proof}[Proof of Theorem~\ref{thm:abelian-spectrum}]
Combining Theorems~\ref{thm:spectrum-main} and~\ref{thm:jilu-reduction}, a symmetric $K$-invariant $\operatorname{SQS}(v)$ exists if and only if $v\equiv2$ or $4\pmod6$, no element of $K$ has order divisible by $8$, and every odd prime divisor of $v$ is congruent to $1$ or $5\pmod{12}$. Under these conditions, every odd prime divisor of $v$ is congruent to $1\pmod4$, and hence so is the odd part of $v$. Thus the condition $v\equiv2$ or $4\pmod6$ is equivalent to $v\equiv2,4,8,10,16,20\pmod{24}$, proving the existence assertion.

Let $(K,\mathcal B)$ be any symmetric $K$-invariant $\operatorname{SQS}(v)$. If every block orbit under translations by $K$ has length $v$, then $(v-1)(v-2)/24$ is an integer, which excludes $4\mid v$. Among the admissible orders, this leaves only $v\equiv2,10\pmod{24}$.

Conversely, suppose that $v\equiv2,10\pmod{24}$. Then $K$ has a unique element $h$ of order $2$ and no element of order $4$. As in the proof of Corollary~\ref{cor:cyclic-block-orbits}, $\{0,h,a,-a\}$ is a block for every $a\in K\setminus\{0,h\}$; see also~\cite[Lemmas~4.7 and~6.4]{MunemasaSawa}. If a block has a nontrivial stabilizer under translations by $K$, then the stabilizer has order $2$, since its order divides both $4$ and $v$. After translation, the block has the form $\{0,h,a,a+h\}$. Uniqueness of the block containing $\{0,h,a\}$ would then force $a+h=-a$, and hence $2a=h$, contradicting the absence of elements of order $4$ in $K$. Thus every block orbit has length $v$.
\end{proof}

\begin{remark}
Let $K$ be a finite abelian group whose Sylow $2$-subgroup is cyclic of order $4$, and let $(K,\mathcal B)$ be a translation-invariant Steiner quadruple system satisfying $-B\in\mathcal B$ for every $B\in\mathcal B$. Denote the unique element of order $2$ in $K$ by $h$. As in the proof of Corollary~\ref{cor:cyclic-block-orbits}, the system contains the block $\{0,h,a,-a\}$ for every $a\in K\setminus\{0,h\}$.

If a block has a nontrivial stabilizer under translations by $K$, then its stabilizer contains $h$, and the block can be translated to $\{0,h,a,a+h\}$. Uniqueness of the block containing $\{0,h,a\}$ gives $a+h=-a$, and hence $2a=h$. The two solutions of this equation generate the unique subgroup $H$ of order $4$, and the corresponding block $\{0,h,a,-a\}$ is $H$ itself. Hence the cosets of $H$ form a single short block orbit under translations by $K$, of length $|K|/4$, and every other block orbit has length $|K|$. For $K=\mathbb Z_v$ with $v\equiv4\pmod8$, the short orbit is represented by $\{0,v/4,v/2,3v/4\}$. The hypothesis on the Sylow $2$-subgroup is essential: the affine planes in $K=\mathbb F_2^3$ form a symmetric $K$-invariant $\operatorname{SQS}(8)$ with seven translation orbits of length $2$.
\end{remark}

\section{Concluding remarks}\label{sec:concluding-remarks}

This paper determines all orders admitting an $S$-cyclic Steiner quadruple system, with or without the additional requirement of strict cyclicity, and characterizes the finite abelian groups $K$ admitting a symmetric $K$-invariant system. The proof establishes the existence of auxiliary designs $A^*(p)$ for every prime $p\equiv1\pmod4$ and applies the reduction theorems of Piotrowski~\cite{Piotrowski} and of Ji and Lu~\cite{JiLu} to the resulting systems of order $2p$.

For ordinary cyclic Steiner quadruple systems, the recursive constructions in~\cite{Phelps1980,GrannellGriggsProduct1984,FengChang2011} do not yet settle the existence problem for all orders. It would be useful to obtain a reduction comparable to Lemma~\ref{lem:intro-piotrowski-equiv} without requiring each block orbit to be invariant under negation. In particular, the existence problem for cyclic $\operatorname{SQS}(2p)$ and cyclic $\operatorname{SQS}(4p)$ is not settled for all primes $p\equiv7$ or $11\pmod{12}$. No $S$-cyclic system of order $2p$ or $4p$ exists for these primes, but this nonexistence result does not rule out ordinary cyclic systems at the same orders.

An intermediate question concerns $R$-cyclic Steiner quadruple systems. A cyclic Steiner quadruple system $(\mathbb Z_v,\mathcal B)$ is called \emph{$R$-cyclic} if $-B\in\mathcal B$ for every $B\in\mathcal B$. In an $R$-cyclic system, negation may interchange distinct translation orbits of blocks, whereas in an $S$-cyclic system it preserves each orbit individually. An $R$-cyclic $\operatorname{SQS}(v)$ can exist only if $v\equiv2,4,10,20\pmod{24}$~\cite{ChangEtAl2017}. Lu and Jimbo~\cite{LuJimbo} constructed affine-invariant strictly cyclic $\operatorname{SQS}(2p^m)$ for every prime $p\equiv1\pmod4$ with $p<10^5$ and every positive integer $m$, where affine invariance means that every map $x\mapsto ax+b$ on $\mathbb Z_{2p^m}$ ($a\in\mathbb Z_{2p^m}^{\times}$, $b\in\mathbb Z_{2p^m}$) is an automorphism; taking $a=-1$ and $b=0$ shows that these systems are $R$-cyclic.

Every $S$-cyclic system is $R$-cyclic, so Theorem~\ref{thm:spectrum-main} also provides an $R$-cyclic system for every order satisfying its conditions. For primes $p\equiv7$ or $11\pmod{12}$, the orders $2p\equiv14$ or $22\pmod{24}$ are excluded by the necessary congruence condition, whereas the orders $4p\equiv4$ or $20\pmod{24}$ satisfy this condition despite admitting no $S$-cyclic systems. It would be interesting to determine all orders admitting an $R$-cyclic Steiner quadruple system, particularly the orders $4p$ for primes $p\equiv7$ or $11\pmod{12}$.

The auxiliary designs $A(p)$ and $A^*(p)$ defined in Section~\ref{sec:auxiliary} also give constructions for optical orthogonal codes. Let $p\ge5$ be prime, and let $\lambda_a,\lambda_c$ be nonnegative integers. A $(p,4,\lambda_a,\lambda_c)$ \emph{optical orthogonal code}, abbreviated OOC, is a family $\mathcal C$ of $4$-subsets of $\mathbb Z_p$, each specifying the positions of the four ones in a binary codeword of length $p$. The autocorrelation condition requires $|B\cap(B+t)|\le\lambda_a$ for every $B\in\mathcal C$ and nonzero $t\in\mathbb Z_p$, while the cross-correlation condition requires $|B\cap(C+t)|\le\lambda_c$ for distinct $B,C\in\mathcal C$ and every $t\in\mathbb Z_p$~\cite{csw89,yf95}.

Identify the additive group of $\mathbb F_p$ with $\mathbb Z_p$, and choose one block from each translation orbit of an auxiliary design $A^*(p)$. Since $p\ge5$, each block has $p$ distinct translates. No triple occurs in two distinct translated blocks, so the chosen blocks form a $(p,4,2,2)$-OOC with $(p-1)(p-5)/24$ codewords.

Suppose that an auxiliary design $A(p)$ contains no trapezoid block, and choose one block from each of its translation orbits. Two distinct ordered pairs within a block having the same nonzero difference would either share a point, giving an isosceles triangle, or be disjoint, giving a trapezoid. Both are excluded, so each nonzero difference occurs at most once within a block. For a block $B$ and a nonzero $t\in\mathbb Z_p$, the number $|B\cap(B+t)|$ counts the ordered pairs in $B$ with difference $t$. Hence the resulting code has autocorrelation at most $1$, while uniqueness of triple coverage gives cross-correlation at most $2$. It is therefore a $(p,4,1,2)$-OOC with $(p-1)(p-5)/24$ codewords.

Let $p>5$ be prime, and let $\mathcal C$ be a $(p,4,1,2)$-OOC. A codeword containing an isosceles triangle $\{x-d,x,x+d\}$, where $x,d\in\mathbb Z_p$ and $d\ne0$, would meet its translate by $d$ in at least two points, contradicting the autocorrelation bound. Thus every triple contained in a codeword is irregular. The correlation bounds also imply that no triple occurs in two distinct translates of codewords. There are $p(p-1)(p-5)/6$ irregular triangles in $\mathbb Z_p$, and the $p$ translates of each codeword cover $4p$ distinct irregular triangles. Hence $|\mathcal C|\le(p-1)(p-5)/24$. An auxiliary design $A(p)$ without trapezoid blocks attains this bound and therefore gives a $(p,4,1,2)$-OOC with the maximum possible number of codewords.

It remains to determine for which primes $p>5$ with $p\equiv1\pmod4$ an auxiliary design $A(p)$ without trapezoid blocks exists and, in particular, whether such a design exists for every sufficiently large prime in this congruence class. Such constructions would improve the autocorrelation bound from $2$ to $1$ without decreasing the code size or increasing the cross-correlation bound. In optical communications, low autocorrelation assists synchronization, while low cross-correlation limits interference between users~\cite{csw89,yf95}.

\end{document}